                    \def\version{23 August, 2021}                       %
\def\@rmrk#1#2{\refstepcounter
    {#1}\@ifnextchar[{\@yrmrk{#1}{#2}}{\@xrmrk{#1}{#2}}}
\makeatletter\@addtoreset{equation}{section}\makeatother
 \newfont{\bfit}{cmbxti10 scaled 1200}
 \newcommand{\e}{{\rm e} }
 \newcommand{\eps}{\varepsilon}
 \newcommand{\R}{\mathbb{R}}
 \newcommand{\N}{\mathbb{N}}
 \newcommand{\Z}{\mathbb{Z}}
 \newcommand{\E}{\mathbb{E}}
 \renewcommand{\P}{\mathbb{P}}
 \newcommand{\bbV}{\mathbb{V}}
 \def\1{{\mathchoice {1\mskip-4mu\mathrm l} 
{1\mskip-4mu\mathrm l}
{1\mskip-4.5mu\mathrm l} {1\mskip-5mu\mathrm l}}}
 \newcommand {\abs}[1] {\left\lvert {#1} \right\rvert}
\newcommand{\norm}[1]{\left\lVert {#1} \right\rVert}
\renewcommand{\subsection}{\secdef \subsct\sbsect}
\newcommand{\subsct}[2][default]{\refstepcounter{subsection}
\vspace{0.15cm}
{\flushleft\bf \arabic{section}.\arabic{subsection}~\bf #1  }
\nopagebreak\nopagebreak}
\newcommand{\sbsect}[1]{\vspace{0.1cm}\noindent
{\bf #1}\vspace{0.1cm}}
\newtheorem{theorem}{Theorem}[section]
\newtheorem{lemma}[theorem]{Lemma}
\newtheorem{cor}[theorem]{Corollary}
\newtheorem{prop}[theorem]{Proposition}
\newtheorem{definition}[theorem]{Definition} 
\newtheoremstyle{thm}{1.5ex}{1.5ex}{\itshape\rmfamily}{}
{\bfseries\rmfamily}{}{2ex}{}
\newtheoremstyle{rem}{1.3ex}{1.3ex}{\rmfamily}{}
{\itshape\rmfamily}{}{1.5ex}{}
\theoremstyle{rem}
\newtheorem{remark}{{\slshape\sffamily Remark}}[]
\def\thebibliography#1{\section*{References}
  \list%
  {\arabic{enumi}.}
    {\settowidth\labelwidth{[#1]}\leftmargin\labelwidth
    \advance\leftmargin\labelsep
    \parsep0pt\itemsep0pt
    \usecounter{enumi}}
    \def\newblock{\hskip .11em plus .33em minus .07em}
    \sloppy                   
    \sfcode`\.=1000\relax}
\begin{document}
\title[Quenched and averaged large deviations for RWRE]
{\large Quenched and averaged large deviations for random walks in random environments: the impact of disorder}
\author[Rodrigo Bazaes, Chiranjib Mukherjee, Alejanro ram\'\i rez and Santiago Saglietti]{}
\maketitle
\thispagestyle{empty}
\vspace{-0.5cm}

\centerline{\sc By Rodrigo Bazaes\footnote{, Facultad de Matem\'aticas,Pontificia Universidad Cat\'olica de Chile,
Vicu\~na Mackenna 4860, Macul,
Santiago, Chile {\tt rebazaes@mat.uc.cl}},
Chiranjib Mukherjee\footnote{Fachbereich Mathematik und Informatik, Universit\"at M\"unster, Einsteinstrasse 62, M\"unster 48149 {\tt chiranjib.mukherjee@uni-muenster.de}},
Alejandro F. Ram\'\i rez\footnote{Facultad de Matem\'aticas, Pontificia Universidad Cat\'olica de Chile, Vicu\~na Mackenna 4860, Macul, Santiago, Chile {\tt aramirez@mat.uc.cl}}
and Santiago Saglietti\footnote{Facultad de Matem\'aticas, Pontificia Universidad Cat\'olica de Chile, Vicu\~na Mackenna 4860, Macul, Santiago, Chile
{\tt sasaglietti@mat.uc.cl}}}
\renewcommand{\thefootnote}{}
\footnote{\textit{AMS Subject
Classification:} 60K37, 60F10, 82C41}
\footnote{\textit{Keywords:} Random walks in random environment, large deviations, disorder, 
quenched and averaged rate functions, random walks in random scenery, disordered media.}

\vspace{-0.5cm}
\centerline{\textit{PUC Chile and Universit\"at M\"unster}}
\vspace{0.2cm}

\begin{center}
\version
\end{center}

\begin{quote}{\small {\bf Abstract: }
In 2003, Varadhan \cite{V03} developed a robust method for proving quenched and averaged large deviations 
for random walks in a uniformly elliptic and i.i.d. environment (RWRE) on $\Z^d$. One fundamental question which remained open
was to determine when the quenched and averaged large deviation rate functions agree, and when they do not.
In this article we show that for RWRE in uniformly elliptic and i.i.d. environment in $d\geq 4$, 
the two rate functions agree on any compact set contained in the interior of their domain which does not contain the origin, provided that the disorder of the environment is sufficiently low. 
Our result provides a new formulation which encompasses a set of sufficient 
conditions under which these rate functions agree without assuming that the RWRE is ballistic (see \cite{Y11}), satisfies a CLT or even a law of large numbers (\cite{Zer02,Ber08}). Also,
 the equality of rate functions is not restricted to neighborhoods around given points, as long as the disorder of the environment is kept low. One of the novelties of our approach is the introduction of an auxiliary random walk in a deterministic environment which is itself ballistic (regardless of the actual RWRE behavior) and whose large deviation properties approximate those of the original RWRE in a robust manner, even if the original RWRE is not ballistic itself.
}
\end{quote}


\section{Introduction and background.} 
Consider a random walk in an i.i.d. and uniformly elliptic random environment (RWRE) in $\Z^d$, $d\geq 1$. Multidimensional RWRE-s 
have remained a mathematically challenging topic -- in a general set up, some of its most fundamental questions like law of large numbers or CLTs have remained elusive till date. In this general set up and for any $d\geq 1$, Varadhan \cite{V03} showed that the rescaled location of the RWRE satisfies both quenched and averaged large deviation principles. A natural question which remained open was to determine when the quenched and averaged large deviation rate functions agree, and when they do not.  The main result of the current article is that, for $d\geq 4$ and {\it any} compact subset $\mathcal K$ of the open $\ell^1$-unit ball (not containing the origin), the quenched and averaged rate functions of {\it any} RWRE in a uniformly elliptic and i.i.d. environment agree on $\mathcal K$, if the {\it disorder} of the environment remains sufficiently small, see Theorem \ref{th:main}. Previously,  it was shown by Yilmaz \cite{Y11} that the two rate functions agree on some neighborhood of the non-zero limiting velocity in $d\geq 4$ whenever the RWRE is ballistic and satisfies Sznitman's condition (T). In contrast, our result does not require any ballisticity condition for the RWRE, nor do we need the RWRE to satisfy a CLT or even a law of large numbers (see Zerner \cite{Zer02} and Berger \cite{Ber08}); and the equality of rate functions is not restricted to neighborhoods around a given point, as long as the disorder is kept low. For example, the present set up covers the following RWRE models (where condition (T) is unavailable and the relations between the two rate functions have not been studied previously): (a) random walks in {\it balanced} random environments (see \cite{L82,GZ12,BD14}); (b) random walks in isotropic environments \cite{BK91,BZ07}; (c) environments which are perturbations of the simple random walk, invariant under reflections and balanced in one coordinate direction \cite{Ba16};  (d) and for RWRE models where the equivalent ballisticity conditions $(\mathrm P)_{\mathrm M} \Leftrightarrow (\mathrm T)_\gamma\Leftrightarrow(\mathrm T^\prime)\Leftrightarrow(\mathrm T)$ (see \cite{GR20} for the proof of this equivalence) fails to hold, in particular, including all the cases where neither the law of large numbers nor the existence of an asymptotic direction (see \cite{DR10}) has been proved.

Apart from the result itself, the present work introduces a novel point of view to study the problem of equality of the rate functions, namely that of the disorder of the environment. Indeed, our result suggests that, unless one is focused on particular regions of the domain (such as the corners in its boundary or neighborhoods around the velocity whenever the RWRE is ballistic), disorder should play an essential role in whether equality between the two rate functions holds, in the sense that equality should hold below and fail above a certain threshold disorder. This intuition has been confirmed when looking at the rate functions at the boundary of their domain for a certain wide family of environments in a separate work \cite{BMRS21}, see Remark \ref{rem:boundary}. 

The main technical contributions of the proof involves introducing a walk in a {\it deterministic environment}, 
comparing this to the original walk in the random environment and  controlling their Radon-Nikodym derivative in a robust manner uniformly over all environmental laws. The $d\geq 4$ assumption (in the absence of which the equality result does not hold) manifests in estimating the exponential tail on the size of the intersection of two random walks. Here the random walks in question are in the deterministic environments, and therefore the intersection estimate, contrary to previous works, does not depend on various ballisticity assumptions on the RWRE, see Section \ref{sec:proof:sketch} for an outline of the proof. Before turning to the precise statements, it is instructive to give some background on RWRE and underline some pertinent questions that motivated the current work.

RWRE-s provide a natural setting for studying  ``statistical mechanics in random media" and have  enjoyed a profound upsurge of interest in the last two decades within mathematicians and physicists. The one-dimensional model was first considered by Solomon \cite{So75}  and extended later by Sinai \cite{Si82} which provided a very efficient methodology 
which is by now fairly well-understood, and exhibits behaviors that are very different from that of the simple random walk. On the other hand, multi-dimensional RWRE turns out to be much more difficult to analyze than the one-dimensional model. 


\smallskip

\noindent The mathematical layout of RWRE can be described as a two-layer process. First, consider a sequence $\omega=(\omega(x))_{x \in \Z^d}$ of probability vectors on $\mathbb{V}:=\{x \in \Z^d : |x| = 1\}=\{\pm e_1,\dots,\pm e_d\}$ indexed by the sites of the lattice, i.e. $\omega(x)=(\omega(x,e))_{e \in \mathbb{V}}$ is a probability vector on $\mathbb{V}$ for each $x \in \Z^d$. Any such sequence $\omega$ will be called an \textit{environment} and the space $\Omega$ of all such sequences will be called the \textit{environment space}. Then, the first layer of our process consists of, for a fixed $\omega \in \Omega$, a random walk on the lattice whose jump probabilities are given by the environment $\omega$, i.e. for each $x \in \Z^d$ the law $P_{x,\omega}$ of this random walk $(X_n)_{n \geq 0}$ starting at $x$ is prescribed by
	$$
	P_{x,\omega}(X_0=x)=1 \hspace{1cm}\text{ and }\hspace{1cm}P_{x,\omega}(X_{n+1}=y+e\,|\,X_n=y) = \omega(y,e) \hspace{0.5cm}\forall\,y \in \Z^d\,,\,e \in \mathbb{V}.
	$$ We call $P_{x,\omega}$ the {\it quenched law} of the RWRE. The second layer of our process is then obtained when the environment $\omega$ is chosen at random according to some Borel probability measure $\P$ on $\Omega$ (when endowed with the usual product topology). We call any such $\P$ an \textit{environmental law}. Averaging $P_{x,\omega}$ over $\omega$ then produces a probability measure on $\Omega \times (\Z^d)^{\N_0}$ given by the formula
$$
P_x(A \times B) = \int_A P_{x,\omega}(B)\mathrm{d}\P \hspace{1cm}\forall\, A \in \mathcal{B}(\Omega)\,,\,B \in \mathcal{B}((\Z^d)^{\N_0}).
$$
We call the measure $P_x$ the \textit{averaged} or \textit{annealed} law of the RWRE (starting at $x$) and the sequence $X=(X_n)_{n \in \N_0}$ under $P_x$ a RWRE with environmental law $\P$. 
	
Given any RWRE, it is natural to ask whether classical limit theorems can hold for its quenched and annealed measures. The law of large numbers (LLN) for the quenched distribution, if valid, takes the form $\mathbb P\big(\omega\colon\,\, \lim_{n\to\infty} \frac {X_n}n =v\quad\mbox{a.e. w.r.t.}\,\, P_{x,\omega}\big)=1$ for some $v\in \mathbb R^d$. 
The latter display is equivalent to the validity of $P_x(\omega\colon\,\, \lim_{n\to\infty} {X_n}/n =v)=1$ which translates to the LLN for the annealed measure. 
We refer to the literature \cite{L82,PV81,K85,KV86,Zer02,Ber08,BZ08,RS09} where both LLN and central limit theorems (CLT) have been investigated quite successfully whenever the law $\mathbb P$ of the ambient environment enjoys some special properties like the existence of an invariant density for the {\it environment viewed from the particle} or that of {\it strong transience conditions.}

\smallskip

\noindent While RWRE exhibit the same behavior in the quenched and the annealed setting on the level of LLN, the resulting scenarios for the two cases could be very different for 
regimes concerning CLTs or large deviation principles (LDP). The latter statement concerns investigating the (formally written) asymptotic behavior 
\begin{equation}\label{QALDP}
\begin{aligned}
&\lim_{n\to\infty}\frac 1n \log P_{0,\omega}\bigg(\frac {X_n} n\approx x\bigg)\simeq - I_q(x), \quad\mbox{and} \quad 
&\,\,\lim_{n\to\infty}\frac 1n \log P_{0}\,\,\bigg(\frac {X_n} n\approx x\bigg)\simeq - I_a(x),
\end{aligned}
\end{equation}
where the former statement holds for $\mathbb P$-a.e. $\omega$, while $I_q$ and $I_a$ are the {\it quenched} and {\it annealed large deviation rate functions}, respectively. From Jensen's inequality and Fatou's lemma it follows that $I_a(\cdot)\leq I_q(\cdot)$. However, a deeper connection between the two rate functions is closely intertwined with  
 the profound interplay between the random walk and the underlying impurities of the environment, which leads to the following natural question: If at a large time $n$, the RWRE were to find itself at an atypical location, one could wonder if such  unlikely scenario resulted from a strange behavior of the particle in that environment or if the particle actually encountered an atypical environment. The answer to this question hinges upon a delicate statement regarding the equality of the two rate functions, a sufficient condition for which, as shown by our main result, is determined by the underlying disorder of RWRE models for which even the basic limit theorems are not required to hold. We turn to a precise statement of the main result of the article.

\section{Main result}
 In the sequel we shall work with environmental laws $\P$ satisfying the following assumption:

\noindent{\bf Assumption $\mathrm{A}$}: the environment is {\it i.i.d.} (i.e. the random vectors $(\omega(x))_{x \in \Z^d}$ are independent and identically distributed under $\mathbb P$) and \textit{uniformly elliptic} under $\P$, i.e., there is a constant $\kappa>0$ such that, for all $x\in\mathbb Z^d$ and $e \in \bbV$,
\begin{equation}
\label{eq:unifk}
\P(\omega(x,e)\geq\kappa)=1.
\end{equation}

Given any environmental law $\mathbb{P}$ satisfying Assumption $\mathrm{A}$, we now define its {\it disorder} as
\begin{align}
&\mathrm{dis}(\mathbb{P}):= \inf\big\{ \varepsilon > 0 :  \xi(x,e) \in [1-\varepsilon, 1+\varepsilon],\,\, \P\text{-a.s. for all }e \in \mathbb{V} \text{ and }x \in \Z^d \big\},\label{eq:defdis} \\
&\qquad\mbox{with }\,\, \xi(x,e):=\frac{\omega(x,e)}{\alpha(e)} \quad\mbox{and}\quad \alpha(e):= \E[\omega(x,e)] \qquad\forall\, e \in \mathbb V, \label{def-q} 
\end{align}
{where $\E$ denotes expectation w.r.t. $\P$ and the definition of $\alpha(e)$ does not depend on $x\in\Z^d$ by Assumption A.} Moreover, both
$\xi(x,e)$ and $\mathrm{dis}(\P)$ are well-defined since $\P$ satisfies Assumption $\mathrm{A}$ and 
$\mathrm{dis}(\mathbb{P})$ can be seen as the $L^\infty(\P)$-norm of the random vector $(\xi(x,e)-1)_{e \in \mathbb{V}}$ for any $x \in \Z^d$. 
In \cite{V03}, Varadhan proved that, under Assumption $\mathrm{A}$, both the quenched distribution $P_{0,\omega}\big(\frac {X_n}n\big)^{-1}$ 
and its averaged version $P_{0} \, \big(\frac {X_n}n\big)^{-1}$ satisfy a  large deviations principle, that is, that there exist two lower-semicontinuous functions $I_a,I_q:\mathbb R^d\to [0,\infty]$ such that for any $G\subset \mathbb R^d$ with interior $G^\circ$ and closure $\overline G$,
$$
-\inf_{x\in G^\circ}I_q(x)\le\liminf_{n\to\infty}
\frac{1}{n}\log P_{0,\omega}\left(
\frac{X_n}{n}\in G\right)\le
\limsup_{n\to\infty}
\frac{1}{n}\log P_{0,\omega}\left(
\frac{X_n}{n}\in G\right)\le-\inf_{x\in\overline G} I_q(x)
$$ for $\P$-almost every $\omega\in \Omega$ (and that the analogous statement obtained by replacing $P_{0,\omega}$ and $I_q$ by $P_0$ and $I_a$ also holds), see Remark \ref{rem:literature} for a brief overview of the literature on large deviations for RWRE. If $|x|_1$ denotes the $\ell^1$ norm of $x \in \R^d$, and we write $\mathbb D:=\{x\in \R^d\colon |x|_1\leq 1\}$ for the closed $\ell^1$-unit ball and $\mathrm{int}(\mathbb D):=\{x\in \R^d\colon |x|_1 < 1\}$ for its interior, it can be shown that the rate functions $I_q$ and $I_a$ are both convex and are finite if and only if $x\in \mathbb D$. Being also lower semicontinuous, this implies that both $I_q$ and $I_a$ are continuous functions on $\mathbb{D}$, see \cite[Theorem 10.2]{R97}. Furthermore, for any RWRE satisfying Assumption A, regardless of the disorder and in any $d\geq 2$, we always have $I_q(0)=I_a(0)$ and $\{I_q=0\}=\{I_a=0\}$ (see \cite[Theorem 8.1]{V03} and also Theorem 7.1 there for a formula for $I_a(0)$) and it is also well-known that one always has the inequality $I_a \leq I_q$. 
Here is our main result. 
\begin{theorem}\label{th:main}
	For any $d \geq 4$, $\kappa>0$ and compact set $\mathcal{K} \subseteq \mathrm{int}(\mathbb{D}) \setminus \{0\}$, there exists $\eps=\eps(d,\kappa,\mathcal{K}) >0$ such that, for any RWRE satisfying Assumption $\mathrm{A}$ with ellipticity constant $\kappa$, if  $\mathrm{dis}(\mathbb{P})<\eps$ then we have the equality 
	$$
	I_q(x)=I_a(x) \qquad \text{ for all }x \in \mathcal{K}.
	$$
\end{theorem}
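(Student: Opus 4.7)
My approach combines convex duality with a second-moment ($L^2$) argument in a low-disorder regime, mediated by an auxiliary random walk on a deterministic environment. Since $I_q$ and $I_a$ are convex, finite and continuous on $\mathrm{int}(\mathbb{D})$ by \cite{V03}, Fenchel--Legendre duality reduces the proof of $I_q\equiv I_a$ on $\mathcal{K}$ to the equality of the quenched and annealed cumulant generating functions
\[
\Lambda_q(h):=\lim_{n\to\infty}\tfrac{1}{n}\log E_{0,\omega}\big[e^{h\cdot X_n}\big], \qquad \Lambda_a(h):=\lim_{n\to\infty}\tfrac{1}{n}\log E_{0}\big[e^{h\cdot X_n}\big],
\]
at every tilt $h$ that is a subgradient of $I_a$ at some $x\in\mathcal{K}$. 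Since $\mathcal{K}$ is compact and avoids both $0$ and $\partial\mathbb{D}$, the relevant set of dual tilts is itself compact; points of $\mathcal{K}$ lying in the common zero set $\{I_a=0\}=\{I_q=0\}$ are already covered by the identity recalled in the introduction, and at every other point of $\mathcal{K}$ the dual tilt is nonzero. Because $\Lambda_q\ge\Lambda_a$ holds automatically, only the reverse inequality is at stake.

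\textbf{Auxiliary ballistic walk and the $L^2$ bound.} Writing $\omega(x,e)=\alpha(e)\xi(x,e)$ with the $\xi(x,e)$ mean-one and independent across sites, Fubini yields
\[
Z_n^a(h):=\mathbb{E}\big[E_{0,\omega}[e^{h\cdot X_n}]\big] = E_0^{\mathrm{aux}}\big[e^{h\cdot X_n}\big],
\]
where $P^{\mathrm{aux}}$ denotes the law of the homogeneous random walk on the \emph{deterministic} environment $(\alpha(e))_{e\in\mathbb{V}}$. After exponential tilting by $h$ this auxiliary walk is a homogeneous random walk with nonzero drift, hence ballistic with an explicit LDP, \emph{irrespective} of whether the RWRE itself is ballistic. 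The quenched partition function $Z_n(h,\omega):=E_{0,\omega}[e^{h\cdot X_n}]$ then admits a representation as an $E^{\mathrm{aux}}$-expectation of a product of independent mean-one disorder factors at the sites visited by the walk, and the central technical goal becomes the uniform $L^2$ bound
\[
\sup_{n\ge 1}\mathbb{E}\!\left[\left(\frac{Z_n(h,\omega)}{Z_n^a(h)}\right)^{\!2}\right]<\infty,
\]
for each $h$ in the set of dual tilts, provided $\mathrm{dis}(\mathbb{P})$ is sufficiently small.

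\textbf{Intersection estimate and the role of $d\ge 4$.} Expanding the square produces an expectation under two \emph{independent} copies of the $h$-tilted auxiliary walk, with multiplicative corrections at jointly visited sites whose deviation from $1$ is $O(\mathrm{dis}(\mathbb{P})^2)$. This is governed by the expected intersection local time $\sum_{x\in\Z^d}G^h(x)^2$, where $G^h$ is the Green function of the tilted auxiliary walk. Because of the nonzero drift, $G^h(x)$ decays as $|x|^{-(d-1)/2}$ along the drift line with a Gaussian transverse profile of width $\sqrt{|x|}$, and a direct computation gives $\sum_{x}G^h(x)^2<\infty$ precisely when $d\ge 4$. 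Low disorder then ensures that the per-intersection correction is small enough for the full sum, once exponentiated, to remain bounded uniformly in~$n$.

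\textbf{From $L^2$ to the LDP equality; main obstacle.} The uniform $L^2$ bound combined with Paley--Zygmund yields $Z_n(h,\omega)\ge c\,Z_n^a(h)$ with positive probability uniformly in $n$, and a bounded-differences concentration inequality applied to $\log Z_n(h,\omega)$ viewed as a function of the i.i.d.\ environment upgrades this to $\tfrac{1}{n}\log Z_n(h,\omega)\to\Lambda_a(h)$ $\mathbb{P}$-a.s., hence $\Lambda_q(h)\le\Lambda_a(h)$. Uniformity in $h$ over the compact set of dual tilts (via continuity of $\Lambda_a$ and of the subgradient map on $\mathcal{K}$) combined with Step~1 then yields $I_q\equiv I_a$ on $\mathcal{K}$. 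The main obstacle is the intersection estimate of Step~3: one must track the quantitative transience of the tilted auxiliary walk uniformly in the tilt $h$ and squeeze out the quadratic (rather than merely linear) smallness in $\mathrm{dis}(\mathbb{P})$ required to compensate the sum over intersection sites. The dimensional threshold $d\ge 4$ appears essentially sharp within this framework, being dictated by the summability of $G^h(\cdot)^2$ for a walk with nonzero drift.
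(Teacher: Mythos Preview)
Your overall architecture—auxiliary deterministic walk, $L^2$/second-moment bound, intersection estimate in $d\ge 4$, Paley--Zygmund—is the same skeleton the paper uses. But there are two genuine gaps.

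\textbf{The identity $Z_n^a(h)=E_0^{\mathrm{aux}}[e^{h\cdot X_n}]$ is false.} Fubini gives
\[
Z_n^a(h)=\sum_{\text{paths }\bar x}e^{h\cdot x_n}\,\mathbb{E}\Big[\prod_{j=1}^{n}\omega(x_{j-1},\Delta_j)\Big],
\]
and the inner expectation equals $\prod_j\alpha(\Delta_j)$ only if the path never revisits a site: for repeated visits to $x$, the factors $\omega(x,e_1),\omega(x,e_2),\dots$ are \emph{correlated} (they sum to $1$), so $\mathbb{E}[\omega(x,e_1)\omega(x,e_2)]\ne\alpha(e_1)\alpha(e_2)$. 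Thus your ``auxiliary walk'' is not the annealed path measure, and the clean replica picture in your Step~3 (disorder corrections only at \emph{joint} intersection sites of the two replicas) is incorrect: \emph{self}-intersections of each replica already produce correlation factors of the same order. The paper sidesteps this entirely by keeping the $\xi$-product explicit and proving an \emph{exact} path identity (its Lemma~3.1(P3)) relating $E_{0,\omega}[e^{(\theta+\theta_{y,\alpha})\cdot X_n}]$ to a $Q$-walk expectation weighted by $\prod_j\xi(X_{j-1},\Delta_j)$; the $\xi$-product is then controlled by the crude deterministic bound $\prod\xi\le e^{h(\mathrm{dis})n}$, which absorbs self-intersections. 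The $L^2$ bound is then run not on $Z_n/Z_n^a$ but on a functional $\Phi_n$ restricted to regeneration levels, where a renewal equation (their Lemma~5.4) makes the intersection analysis tractable via Bolthausen--Sznitman Green-function bounds.

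\textbf{Uniformity over $\mathbb{P}$ is not addressed.} The theorem asserts a single $\eps$ working for \emph{all} $\mathbb{P}\in\mathcal{P}_\kappa$ with $\mathrm{dis}(\mathbb{P})<\eps$. Your set of dual tilts, your tilted auxiliary walk, and your Green-function estimates all depend on $\alpha=\mathbb{E}[\omega(0,\cdot)]$, and you do not explain why the constants are uniform as $\alpha$ ranges over $\mathcal{M}_1^{(\kappa)}(\mathbb{V})$. This is where the paper does real work: a uniform inverse-function theorem for $\nabla\overline\Lambda_a$ (their Theorem~4.5), uniform regeneration-time exponential moments (Proposition~3.7), and uniform Bolthausen--Sznitman constants. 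Two smaller points: the ``automatic'' inequality is $\Lambda_a\ge\Lambda_q$ (Jensen), not the reverse as you wrote; and the bounded-differences concentration step is both unnecessary and unjustified as stated (changing $\omega(x,\cdot)$ can shift $\log Z_n$ by order $N_x$, which is not uniformly bounded). Once you have the $L^2$ bound, Paley--Zygmund plus the known $\mathbb{P}$-a.s.\ convergence $\tfrac1n\log Z_n\to\Lambda_q$ already force $\Lambda_q\ge\Lambda_a$, exactly as the paper argues in its Proposition~3.13.
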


Let us make some comments about the result.

\begin{remark}[The region of equality]\label{rem:theo:main}
As mentioned above, since $I_q(0)=I_a(0)$, for any $d\geq 4$ and $\kappa>0$ and for any $x\in \mathrm{int}(\mathbb D)$, the above result implies that 
there is $\eps>0$ such that $I_a(x)=I_q(x)$ for $\mathrm{dis}(\P)<\eps $, so we can think of the result above as saying that the region of equality $\{ x \in \text{int}(\mathbb{D}) : I_q(x)=I_a(x)\}$ covers the entirety of 
$\text{int}(\mathbb{D})$ in the limit as $\mathrm{dis}(\P) \rightarrow 0$, uniformly over all environmental laws $\P$ with a uniform ellipticity constant bounded from below by some $\kappa > 0$. However, we point out that, for a {\it fixed environmental law} $\P$ and regardless of its disorder, it follows easily from Jensen's inequality that 
$I_a$ and $I_q$ can never be equal everywhere on the boundary $\partial{\mathbb D}$ (unless $\P$ is degenerate), and by continuity of $I_a$ and $I_q$, the strict inequality $I_a(\cdot)< I_q(\cdot)$ then extends also to some regions in $\text{int}(\mathbb{D})$, see \cite[Proposition 4]{Y11}. Finally, for $d\in \{2,3\}$, such an identity between the two rate functions is not expected to be true for general RWRE, as shown in \cite{YZ10}: for $d=2,3$ there is a class of non-nestling random walks in uniformly elliptic and i.i.d. environments 
such that $I_a$ and $I_q$ are never identical on any open neighborhood of the velocity.\qed
 \end{remark}
  
 \begin{remark}[An auxiliary random walk]\label{rem:proof}
 One of the novelties of our approach is the introduction of an auxiliary random walk (in a deterministic environment) satisfying the following key properties: (i) (a particular version of) its logarithmic moment generating function is intimately related with those of the RWRE (see Section \ref{sec:proof:sketch} for further details) and (ii) this walk is ballistic and possesses a strong regeneration structure.  By means of this auxiliary walk, we are able to study the LDP properties of the original RWRE using techniques available for ballistic walks, even if our original RWRE is not ballistic itself.\qed
 \end{remark}

\begin{remark}[Literature remarks]\label{rem:literature}
Large deviations for RWRE for $d=1$ were handled by Greven and den Hollander \cite{GdH94} in the quenched setting and by Comets, Gantert and Zeitouni \cite{CGZ00} (see also \cite{GZ98}) in both quenched and annealed settings (including a variational formula relating the the two rate functions. For $d\geq 1$, using sub-additive arguments, Zerner \cite{Zer98} (see also Sznitman \cite{S94}) proved a quenched LDP for ``nestling environments"\footnote{A RWRE is called nestling if the origin lies in the interior of the convex hull of the support of the local drift  $\sum_{e \in \mathbb V} e \omega(0,e)$ around the origin.}, while Varadhan \cite{V03} dropped the latter assumption on the environment and proved both the quenched and annealed~LDP. Kosygina, Rezakhanlou and Varadhan \cite{KRV06} developed a novel method for obtaining quenched LDP for
elliptic diffusions with a random drift based on a convex variational approach (see also Kosygina-Varadhan \cite{KV08}) which was adapted by Rosenbluth \cite{R06} for elliptic RWRE in $d\geq 1$ and developed further by Yilmaz \cite{Y08} and by Rassoul-Agha and Sep\"al\"ainen \cite{RS11}. The latter approach was extended to non-elliptic models like random walks on percolation clusters including long-range correlations in  \cite{BMO16} (see also Kubota \cite{K12} and Mourrat \cite{M12} for sub-additive approaches to quenched large deviations, see \cite{SS04,BB07,MP07} for quenched CLT results). We also refer to \cite{GKS07} for relevant results on large deviations for random walks  in random scenery \cite{KS79}. \qed

\end{remark}

\begin{remark}[Previous results under condition-$(\mathrm T)$]\label{rem:condT}
To put our work into context, let us now comment on a strong ballisticity criterion known as {\it Condition-$(\mathrm T)$}, introduced by Sznitman~\cite{S01}, which is the main assumption for all previously known results on the equality of the rate functions (at least for standard RWRE in dimensions $d \geq 4$). Given a direction $\ell\in\mathbb S^{d-1}$, the RWRE is said to satisfy condition-$(\mathrm T)$ if for some $\gamma \in (0,1]$ (or, equivalently, if for any such $\gamma$) there exists a neighborhood $V$ of $\ell$ such that, for all $\ell^\prime \in V$,
\begin{equation}\label{ballisticity}
\lim_{L\to\infty} L^{-\gamma} \log P_0\big[\langle X_{T_{U_{\ell^\prime,L}}}, \ell^\prime\rangle<0\big]<0,
\end{equation}
\textup{where $T_{U_{\ell^\prime,L}}:=\inf\{n\geq 0\colon X_n\notin U_{\ell^\prime, L}\}$ is the exit time from $U_{\ell^\prime,L}:= \{x\in\mathbb Z^d: -L< \langle x, \ell^\prime\rangle < L\}$}, see also \cite{GR20}. 
Under Assumption $\mathrm{A}$, condition-$(\mathrm T)$ implies that: i) a law of large numbers $\lim_{n\to\infty}\frac{ X_n }{n}=v$ holds $P_0$-a.s. with a non-zero velocity $v$ and ii) there exist regeneration times (with finite moments) such that the RWRE segments embedded between  
these times are an i.i.d. sequence under $P_0$. This regeneration structure has proved very fruitful tool in the study of LDP for RWRE (see e.g. \cite{PZ09,Y10,Ber12}). However, there are prominent RWRE models which do not satisfy this condition; see below for some examples of these models that are included in our current set up. 

Under Assumption A and condition-$(\mathrm T)$, it was shown in \cite{Y11} that when $d\geq 4$, $I_a=I_q$ on some (possibly small) neighborhood of the non-zero velocity (which, as mentioned above, always exists under \eqref{ballisticity}). Note that this result does not require the disorder of the environment to be small, but in return only yields equality in a (possibly small) neighborhood of a very specific point in the domain. In contrast, Theorem \ref{th:main} does not require the walk to be ballistic nor are we restricted to neighborhoods around given points, as long as the disorder of the environment is maintained low. As mentioned earlier,  our result applies to the following models where Sznitman's condition-(T) is not available: random walks in {\it balanced} random environments (i.e. such that $\mathbb \P(\omega(x,e)=\omega(x,-e)\, \forall\, x,e)=1$), random walks in isotropic environments, environments which are perturbations of the simple random walk, invariant under reflections and balanced in one coordinate direction and for RWRE models where the equivalent ballisticity conditions $(\mathrm P)_{\mathrm M} \Leftrightarrow (\mathrm T)_\gamma\Leftrightarrow(\mathrm T^\prime)\Leftrightarrow(\mathrm T)$ fails to hold, in particular, including all the cases where neither the law of large numbers nor the existence of an asymptotic direction has been proved. Finally, we recall that, as shown in \cite{V03}, we always have the equality $I_a(0)=I_q(0)$ under Assumption $\mathrm{A}$, regardless of the disorder. However, except for some results in specific scenarios (for RWRE satisfying condition-(T) and which are {\it nestling}, see \cite[Theorem 5-(iv)]{Y11}), both our approach and that in \cite{Y11} seem unfit to study the equality of the rate functions in \textit{neighborhoods} of the origin.\qed
 
 \end{remark} 
 
 \begin{remark}[Relation between $I_q$ and $I_a$ on the boundary of $\mathbb D$]\label{rem:boundary}
In a separate work \cite{BMRS21}, we show the analogue of Theorem \ref{th:main} for compact sets on the boundary $\partial\mathbb D:=\{x\in \R^d: |x|_1=1\}$ (not intersecting any of the $(d-2)$-dimensional facets of $\partial \mathbb D$). As a consequence, we obtain that both $I_q$ and $I_a$ admit simple explicit formulas on the boundary $\partial\mathbb D$ for sufficiently small disorder. We refer to \cite{RSY17a,RSY17b} for an alternative variational representation of $I_q$.\qed
\end{remark}

\subsection{Outline of the proof}\label{sec:proof:sketch}

For conceptual clarity and convenience of the reader, we now present a brief outline of the proof of Theorem \ref{th:main}, highlighting the main technical contribution of our approach, and underlining the similarities and differences with earlier works under condition-$(\mathrm T)$.
	
The proof of Theorem \ref{th:main} consists of three parts, which we summarize below. We first notice that, in order to obtain Theorem \ref{th:main}, it will suffice to show that, for any $y \in \mathrm{int}(\mathbb{D}) \setminus \{0\}$ and $\kappa>0$, if $d \geq 4$ then there exist $\eps=\eps(y,d,\kappa),r=r(y,d,\kappa) >0$ such that $I_q=I_a$ on $B_r(y)$, the $\ell^1$-ball of radius $r$ centered at $y$, for any RWRE with $\mathrm{dis}(\P)<\varepsilon$ which satisfies Assumption $\mathrm{A}$ with ellipticity constant $\kappa$ (see Theorem \ref{th:main2} below). Thus, in the following we explain the steps towards showing this variant of Theorem \ref{th:main} for a henceforth fixed $y \in \mathrm{int}(\mathbb{D}) \setminus \{0\}$ and $\kappa>0$.

\noindent \underline{\bf Step 1}: The first building block of the proof, which is one of the main novelties of our approach, is the construction of an auxiliary random walk in a \textit{deterministic} environment verifying that:
\begin{enumerate}
	\item [Q1.] It is ballistic with velocity $y$ and, furthermore, possesses strong regeneration properties;
	\item [Q2.] If we denote its law when starting from $0$ by $Q_0$ and define its ``quenched'' limiting logarithmic moment generating functions as
\[
\overline{\Lambda}_{q}(\theta):=\lim_{n\to \infty}\frac{1}{n}\log E^Q_0\Big(\e^{\langle \theta, X_{n}\rangle} \prod_{j=1}^{n}\xi(X_{j-1},\Delta_j(X))\Big) \quad \theta\in \R^d,
\] 
and its ``averaged" counterpart $\overline{\Lambda}_{a}(\theta)$ being defined analogously with $\prod_{j=1}^{n}\xi(X_{j-1},\Delta_j(X))$ replaced by its averages $\E\prod_{j=1}^{n}\xi(X_{j-1},\Delta_j(X))$ (here $E^Q_0$ denotes expectation w.r.t. $Q_0$, $\omega$ is the random environment from our original RWRE and $\xi$ is given by \eqref{def-q}), then essentially (see Section \ref{sec:dev} for details)
\begin{equation}
\label{eq:igualdad}
I_q-I_a = \tilde{I}_q -\tilde{I}_a,
\end{equation} where $\tilde{I}_q$ and $\tilde{I}_a$ are the Fenchel-Legendre transforms of $\overline{\Lambda}_q$ and $\overline{\Lambda}_a$ respectively, i.e.
\[
\tilde{I}_q(x)=\sup_{\theta \in \R^d}[\langle\theta,x\rangle - \overline{\Lambda}_q(\theta)]\hspace{1cm}\text{ and }\hspace{1cm}\tilde{I}_a(x)=\sup_{\theta \in \R^d}[\langle\theta,x\rangle - \overline{\Lambda}_a(\theta)].
\]
\end{enumerate}
Thus, we see from \eqref{eq:igualdad} that, in order to establish that $I_q=I_a$, it will suffice to show that $\tilde{I}_q=\tilde{I}_a$. Noting that $\tilde{I}_q$ and $\tilde{I}_a$ are essentially ``quenched'' and ``averaged'' versions of a random perturbation determined by $\xi$ of the rate function for this auxiliary walk.  
 Now, in light of (Q1) above, one could try to adapt the method from \cite{Y11} originally devised for RWRE with strong regeneration properties to our auxiliary walk in order to show that $\tilde{I}_q=\tilde{I}_a$. However, note that these two settings are not the same -- indeed, we have a {\it deterministic environment} as opposed to a random one as in \cite{Y11}, and we work with random perturbations of logarithmic MGFs instead of actual MGFs. Thus, one of the main challenges of our work is to 
control~these random perturbations well enough, and we must do so  \textit{uniformly} over all environmental laws with a uniform ellipticity constant bounded from below by $\kappa$
in order to leverage properties of the auxiliary walk. We outline all the necessary steps next.

\noindent \underline{\bf Step 2}: As stated above, we must prove that there exist $\varepsilon, r >0$, depending only on $y,d$ and $\kappa$, such that $\tilde{I}_a(x)=\tilde{I}_q(x)$ for all $x \in B_r(y)$ and any RWRE with environmental law $\P$ with $\mathrm{dis}(\P)<\varepsilon$. As a first step towards this, we show that $\overline{\Lambda}_q(\theta)=\overline{\Lambda}_a(\theta)$ for all $\theta$ with $|\theta|<r_1$ and if $\mathrm{dis}(\P) < \varepsilon_1$, for $r_1, \varepsilon_1 >0$ depending only on $y,d$ and $\kappa$. The main step for showing this is establishing the $L^2(\P)$-boundedness of a particular sequence $(\Phi_n)_{n \in \N}$, which is closely related (if not equal) to a martingale. In our case, the sequence of interest is 
\begin{equation}\label{eq:defphintro}
	\Phi_n(\theta,\omega)= \overline E^Q_0\bigg(\e^{\langle \theta, X_{L_n}\rangle- \overline \Lambda_a(\theta) L_n}\, \, \prod_{j=1}^{L_n} \xi(X_{j-1}, X_j- X_{j-1})\,;\, L_n \text{ is a regeneration time}\bigg),
\end{equation} where $L_n$ denotes the hitting time of the hyperplane $\{ x : \langle x,\ell\rangle = n\}$ and $\overline{E}^Q_0$ denotes expectation w.r.t. $Q_0$ conditional on the event that $\inf\{ n : \langle X_n - X_0,\ell \rangle < 0\}=\infty$ for $\ell \in \mathbb{V}$ some particular direction satisfying that $\langle y,\ell \rangle >0$ (and in terms of which the regeneration structure of the auxiliary random walk is defined, see Section \ref{sec:auxiliary} for details). To see that $(\Phi_n)_{n \in \N}$ is bounded in $L^2(\P)$, one writes $\Phi_n^2$ as an expectation of a certain function of two independent random walks with law $Q_0$ and argues that, in order to bound its second moment whenever the disorder is sufficiently small, it is enough to suitably estimate the number of times the trajectories of these two independent walks intersect and then use this to control the perturbation term $\prod_{j=1}^{L_n} \xi(X_{j-1},X_j-X_{j-1})$ from~\eqref{eq:defphintro}. The desired control reduces to a suitable bound for the probability of non-intersection of two random walks in the same deterministic environment. For this purpose, and in contrast to previous approaches where an estimate by Berger-Zeitouni \cite{BZ08} has been used (to control the probability of non-intersection of two walks in the same {\it random environment}), we invoke the bounds developed by Bolthausen and Sznitman \cite{BS02} which are better suited to our setting.

\noindent \underline{\bf Step 3}: The last task in the proof is to show that the equality of $\overline{\Lambda}_q$ and $\overline{\Lambda}_a$ in a neighborhood of the origin translates into the equality of $\tilde{I}_q$ and $\tilde{I}_q$ in a neighborhood of $y$. To do this, we note~that, by standard arguments, we have that for any $x \in \R^d$,
\[
\tilde{I}_q(x)=\langle \theta_{q,x},x\rangle -\overline{\Lambda}_q(\theta_{x,q}) \hspace{1cm}\text{ and }\hspace{1cm}\tilde{I}_a(x)=\langle \theta_{a,x},x\rangle - \overline{\Lambda}_a(\theta_{x,a})
\] for any $\theta_{x,q},\theta_{x,a} \in \R^d$ such that $\nabla \overline{\Lambda}_q(\theta_{x,q}) = x = \nabla \overline{\Lambda}_a(\theta_{x,a})$. In particular, if we can take $\theta_{x,q}=\theta_{x,a}$ then this readily implies that $\tilde{I}_q(x)=\tilde{I}_a(x)$. Thus, since $\nabla \overline{\Lambda}_q(\theta)=\nabla \overline{\Lambda}_a(\theta)$ for $|\theta|< r_1$ if $\mathrm{dis}(\P)<\varepsilon_1$ by Step 2, if we show that there exist $0 < \varepsilon(y,d,\kappa) < \varepsilon_1$ and $r(y,d,\kappa)>0$ such~that 
\begin{equation} \label{eq:unifinc}
B_r(y) \subseteq \{\nabla \overline{\Lambda}_a(\theta) : |\theta|<r_1\}
\end{equation} whenever $\mathrm{dis}(\P)<\varepsilon$ then for each $x \in B_r(y)$ we would have $\nabla \overline{\Lambda}_q(\theta_{x}) = x = \nabla \overline{\Lambda}_a(\theta_{x})$ for some $\theta_x$ and hence that $\tilde{I}_q(x)=\tilde{I}_a(x)$ immediately follows. A key point here is that we must show that $r$ in \eqref{eq:unifinc} can be taken to be \textit{independent} of the law $\P$, as long as its disorder is sufficiently~low and its uniformly ellipticity constant is bounded from below by $\kappa$. We achieve this by using a \textit{uniform} inverse function theorem for families of differentiable functions (Theorem \ref{prop3} below), which requires us to obtain uniform estimates (over $\P$) on the modulus of continuity at $\theta=0$ of the Hessian $H_a$ of $\overline{\Lambda}_a$ as well as a uniform upper bound on the norm of its inverse $H_a^{-1}$. This concludes the outline of the proof of Theorem \ref{th:main}. 

\subsection{Organization of the article:} The rest of the paper is organized as follows. The construction of the auxiliary random walk as well as the study of its properties is carried out in Section \ref{sec:auxiliary}. Also in Section \ref{sec:auxiliary} the reader will find proof of the equality $\overline{\Lambda}_q=\overline{\Lambda}_a$ in a neighborhood of the origin, assuming the $L^2(\P)$-boundedness of the sequence $(\Phi_n)_{n \in \N}$ in \eqref{eq:defphintro}, the proof of which is deferred to Section \ref{sec:l12}. Finally, Step 3 in the above discussion is carried out in Section \ref{sec:Hessian}.

\section{An auxiliary random walk and equality of its limiting log-MGFs}\label{sec:auxiliary}

As stated in Section \ref{sec:proof:sketch}, Theorem \ref{th:main} is a direct consequence of the following more specific result.

\begin{theorem}\label{th:main2}
	For any $y \in \mathrm{int}(\mathbb{D}) \setminus \{0\}$, $d \geq 4$ and $\kappa>0$, there exist $\eps=\eps(y,d,\kappa),r=r(y,d,\kappa) >0$ such that, for any RWRE satisfying Assumption $\mathrm{A}$ with ellipticity constant $\kappa$, if  $\mathrm{dis}(\mathbb{P})<\eps$ then then we have the equality 
	\[
	I_q(x)=I_a(x) \qquad \text{ for all }x \in B_r(y):=\{ z \in \R^d : |z-y| < r\}.
	\]
\end{theorem}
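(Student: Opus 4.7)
The plan is to follow the roadmap outlined in Section \ref{sec:proof:sketch}: construct an auxiliary ballistic walk in a deterministic environment, establish equality of two limiting logarithmic moment generating functions (log-MGFs) in a neighborhood of the origin, and then transfer this equality to their Fenchel--Legendre transforms via a uniform inverse function theorem. Concretely, the first task is to construct a random walk in a \emph{deterministic} environment on $\Z^d$ which is ballistic with velocity $y$ and admits a regeneration structure along a direction $\ell$ with $\langle y, \ell\rangle > 0$; a natural candidate takes transitions concentrated on the half-space $\{e \in \bbV : \langle e, \ell \rangle > 0\}$ with drift exactly $y$, so that ballisticity and regeneration follow automatically. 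Defining the $\xi$-perturbed log-MGFs $\overline{\Lambda}_q$ and $\overline{\Lambda}_a$ as in the sketch, and performing a change of measure converting the original quenched (resp.\ averaged) path measure of the RWRE into this auxiliary walk weighted by $\prod_{j=1}^n \xi(X_{j-1}, \Delta_j(X))$ (resp.\ its $\P$-expectation), the identity \eqref{eq:igualdad} should follow from the Gartner--Ellis theorem, reducing the theorem to showing $\tilde{I}_q = \tilde{I}_a$ on a neighborhood of $y$.

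\emph{Step 2 (equality of $\overline{\Lambda}_q$ and $\overline{\Lambda}_a$ near $\theta=0$).} My plan is to prove that $\overline{\Lambda}_q(\theta) = \overline{\Lambda}_a(\theta)$ for all $|\theta| < r_1$ whenever $\mathrm{dis}(\P) < \varepsilon_1$, using the standard strategy of establishing $L^2(\P)$-boundedness of the sequence $(\Phi_n)_{n \in \N}$ defined in \eqref{eq:defphintro}. Expanding $\E[\Phi_n^2]$ produces an integral over two independent copies $X^{(1)}, X^{(2)}$ of the auxiliary walk (under the appropriate regeneration-conditioned measure), weighted by $\xi$-factors along both trajectories. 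Since the $\xi(x,e)$ are i.i.d.\ in $x$, centered at $1$ under $\P$, and satisfy $|\xi(x,e) - 1| \leq \mathrm{dis}(\P)$ by \eqref{def-q}, taking the $\P$-expectation factorizes over lattice sites and produces a quantity dominated by
\[
E^{\otimes 2}\!\Bigl[\exp\bigl(C\,\mathrm{dis}(\P)^2\,\mathcal{I}_n\bigr)\Bigr],
\]
where $\mathcal{I}_n$ counts the common sites visited by both walks in the regeneration block ending at time $L_n$. The restriction $d \geq 4$ enters here sharply: for two independent ballistic walks in this deterministic environment the expected intersection count is summable, and the non-intersection estimates of Bolthausen--Sznitman \cite{BS02} (sharper in this setting than the Berger--Zeitouni bounds \cite{BZ08} used in \cite{Y11}) deliver uniform control for every $\P$ with low disorder and ellipticity $\kappa$.

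\emph{Step 3 (transfer via a uniform inverse function theorem).} Given $\overline{\Lambda}_q = \overline{\Lambda}_a$ on $\{|\theta| < r_1\}$, proving $\tilde{I}_q = \tilde{I}_a$ on some $B_r(y)$ reduces to showing that for each $x \in B_r(y)$ one can pick a \emph{common} dual variable $\theta_x$ in that small ball satisfying $\nabla \overline{\Lambda}_q(\theta_x) = x = \nabla \overline{\Lambda}_a(\theta_x)$. To this end, the inclusion \eqref{eq:unifinc} must hold with $r$ \emph{independent} of $\P$, which I would deduce by applying a uniform inverse function theorem to $\nabla \overline{\Lambda}_a$ at $\theta = 0$. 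The hypotheses reduce to: (i) a $\P$-uniform modulus of continuity for the Hessian $H_a$ at the origin; and (ii) a $\P$-uniform upper bound on $\|H_a^{-1}\|$. Both follow from a regeneration-based representation of $H_a$ in the spirit of \cite{Y10}, with low disorder keeping the perturbations small and uniform ellipticity $\kappa$ ensuring non-degeneracy of the unperturbed Hessian.

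\emph{Main obstacle.} The crux will be the $L^2(\P)$-bound in Step 2: one must control an exponential moment of an intersection functional of two ballistic walks \emph{uniformly} over all environmental laws with ellipticity constant $\kappa$. Although $d \geq 4$ guarantees summability of intersection probabilities, adapting the approach of \cite{Y11} (originally designed for RWRE in a random environment, working directly with MGFs) to the present hybrid setting (deterministic auxiliary walk carrying random multiplicative weights inherited from the original RWRE) is delicate: the perturbation $\xi$ acts multiplicatively along the entire trajectory, so one must verify that the Bolthausen--Sznitman non-intersection bounds apply to our specific auxiliary walk with constants depending only on $d$ and $\kappa$, and propagate these bounds through the regeneration decomposition without losing uniformity in $\P$.
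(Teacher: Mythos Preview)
Your Steps 2 and 3 track the paper's argument closely and correctly identify the key technical points (the $L^2(\P)$-bound via intersection estimates from \cite{BS02}, and the uniform inverse function theorem applied to $\nabla\overline{\Lambda}_a$). The gap is in Step 1: your proposed auxiliary walk ``concentrated on the half-space $\{e\in\bbV:\langle e,\ell\rangle>0\}$'' cannot work. If the $Q$-walk assigns zero probability to some step $e$, no change of measure can relate its path weights to those of the RWRE (which visits all of $\bbV$ with positive probability), so the identity \eqref{eq:igualdad} would fail outright. More precisely, the reduction $I_q-I_a=\tilde I_q-\tilde I_a$ in the paper does not come from G\"artner--Ellis; it comes from an \emph{exact algebraic identity} (Lemma \ref{lemma:prop}, property (P3)) of the form
\[
E^Q_0\Big(\e^{\langle\theta,X_n\rangle}\prod_{j=1}^n\xi(X_{j-1},\Delta_j(X))\Big)=(C_{y,\alpha})^{n/2}\,E_{0,\omega}\big(\e^{\langle\theta+\theta_{y,\alpha},X_n\rangle}\big),
\]
which gives $\overline\Lambda_q(\theta)=\tfrac12\log C_{y,\alpha}+\Lambda_q(\theta+\theta_{y,\alpha})$ and similarly for the averaged versions; the Fenchel--Legendre transforms then differ by the same additive affine term, forcing $\tilde I_q-\tilde I_a=I_q-I_a$.

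Obtaining (P3) requires a very specific choice of transition weights: the paper takes $u(e)=\tfrac12\langle y,e\rangle+\tfrac12\sqrt{\langle y,e\rangle^2+4C_{y,\alpha}\alpha(e)\alpha(-e)}$, engineered so that $u(e)u(-e)/(\alpha(e)\alpha(-e))=C_{y,\alpha}$ is the \emph{same constant} for every coordinate direction. This is exactly what makes the $Q^u$-probability of a path depend only on its endpoint and length (see \eqref{eq:eq2}), which is the mechanism behind the change of measure. Your half-space candidate has no such structure, and without it Step 1 collapses. Once you replace your construction by the paper's $u(e)$ (which is uniformly elliptic by (P1), ballistic with drift $y$ by (P2), and hence admits the regeneration structure you want), the rest of your outline goes through essentially as in the paper.
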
 Therefore, here and in the coming sections we shall focus only on proving Theorem \ref{th:main2}.
The goal in this particular section is to begin the proof by showing equality between the averaged and quenched limiting logarithmic moment generating functions (log-MGFs, for short) for small enough disorder. A key building block to this end will be the construction of an auxiliary random walk and a detailed investigation of its properties. 

Before we begin we introduce some further notation to be used throughout the sequel. 
Given $\kappa > 0$, we define 
\[
\mathcal{M}_1^{(\kappa)}(\bbV):= \{ p \in \mathcal{M}_1(\bbV) : p(e)\geq \kappa \text{ for all }e \in \bbV\}
\] with $\mathcal{M}_1(\mathbb{V})$ the space of all probability vectors on $\mathbb{V}$, together with the class of environmental laws 
\[
\mathcal{P}_\kappa:=\{ \P \in \mathcal{M}_1(\Omega) : \P \text{ satisfies Assumption $\mathrm{A}$ with ellipticity constant $\kappa$}\},
\] where $\mathcal{M}_1(\Omega)$ is the space of all environmental laws. Finally, for $\eps > 0$, we define
\[
\mathcal{P}_\kappa(\eps):= \{ \P \in \mathcal{P}_\kappa : \mathrm{dis}(\P) < \eps \}.
\] We are now ready to present this auxiliary random walk and study its properties.

\subsection{Introducing the $Q$-random walk and its limiting log-MGFs}

Let us fix $y\in \mathrm{int}(\mathbb{D})\setminus \{0\}$ and $\mathbb{P} \in \mathcal{P}_\kappa$. Notice that, if we define the function $f:[0,\infty) \to [0,\infty)$ as
\begin{equation}\label{def:f}
f(C):=\sum_{i=1}^d \sqrt{|\langle y,e_i\rangle|^2 +4C\alpha(e_i)\alpha(-e_i)}
\end{equation} then, since $f$ is strictly increasing and continuous, with $f(0)=|y| < 1$ and $\lim_{C \to \infty} f(C)=\infty$, there exists a unique $C_{y,\alpha} \in (0,\infty)$ such that $f(C_{y,\alpha})=1$. With this, we may define for each $e \in \mathbb{V}$ the probability weight
\begin{equation}
u(e):=\frac{\langle y,e\rangle}{2}+\frac{1}{2}\sqrt{|\langle  y,e\rangle|^{2}+4C_{y,\alpha}\alpha(e)\alpha(-e)}\label{eq:w1}
\end{equation} Observe that $u(e) \geq 0$ and $\sum_{e \in \mathbb{V}} u(e)=1$, so that $u:=(u(e))_{e \in \mathbb{V}}$ truly is a probability vector. Central to the proof of Theorem \ref{th:main2} will be the following auxiliary random walk (in a deterministic environment) on $\Z^d$, whose law we denote by $Q$, which is given by the transition probabilities 
\[
Q(X_{n+1}= x + e \,|\, X_n = x) = u(e)  
\] for each $e \in \mathbb{V}$ and $x \in \Z^d$, with $u(e)$ as in \eqref{eq:w1}. We call this auxiliary walk the $Q$-\textit{random walk}. We will write $Q_x$ to denote the law of this walk starting from a fixed $x \in \Z^d$ and $E^Q_x$ to denote expectations with respect to $Q_x$.
Notice that $Q_x$ depends exclusively on $x$, $y$ and $\P$, but it depends on $\P$ only through the average weights $\alpha$. In general, we will omit the dependence on $y$ and $\alpha$ from the notation, but occasionally we will write $Q(y,\alpha)$ instead of $Q$ if we wish to make it explicit. Furthermore, the weights $u$ have been particularly chosen so that this $Q$-random walk satisfies the properties in Lemma \ref{lemma:prop} below.

\begin{lemma}\label{lemma:prop} With this choice of probability weights $u=(u(e))_{e \in \mathbb{V}}$, the following properties hold:
	\begin{enumerate}
		\item [P1.] Given $\kappa >0$ there exists $c_\kappa >0$ such that $u(e) \geq c_\kappa$ for all $e \in \bbV$ and $\P \in \mathcal{P}_\kappa$. 
		\item [P2.] $E^Q_x(X_{n+1}-X_n)=y$ for all $n \in \N$ and $x \in Z^d$.
		\item [P3.] For any $x\in \Z^{d}$ and all environments $\omega$, we have
		\begin{equation}\label{eq:rep1a}
		E^Q_0\Big(\e^{\langle \theta, X_{n}\rangle}\prod_{j=1}^{n}\xi(X_{j-1},\Delta_j (X))\Big)=(C_{y,\alpha})^{\tfrac{n}{2}} E_{0,\omega}\big(\e^{\langle \theta+\theta_{y,\alpha},X_{n}\rangle}\big)
		\end{equation} and 
		\begin{equation}\label{eq:rep2a}
		E^Q_0\Big(\e^{\langle \theta, X_{n}\rangle}\E\prod_{j=1}^{n}\xi(X_{j-1},\Delta_j (X))\Big)=(C_{y,\alpha})^{\tfrac{n}{2}} E_{0}\big(\e^{\langle \theta+\theta_{y,\alpha},X_{n}\rangle}\big),
		\end{equation}
		where $C_{y,\alpha}$ is as in \eqref{eq:w1}, the vector $\theta_{y,\alpha} \in \R^d$ is given by the formulas 
		\begin{equation}\label{eq:deftheta}
		\langle \theta_{y,\alpha},e_i\rangle :=\log\left(\frac{u(e_i)}{\alpha(e_i)\sqrt{C_{y,\alpha}}}\right) \qquad i=1,\dots,d
		\end{equation} and we use the notation $\Delta_j(X):=X_j-X_{j-1}$ for $j=1,\dots,n$. 
	\end{enumerate}
\end{lemma}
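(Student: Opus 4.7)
The three properties are best attacked in the order (P2)--(P3)--(P1), since the algebraic identities underlying (P2) and (P3) are what make the uniform estimate in (P1) tractable. Throughout, I write $A_i := |\langle y,e_i\rangle|^2 + 4C_{y,\alpha}\alpha(e_i)\alpha(-e_i)$, so that $u(\pm e_i) = \pm\tfrac12\langle y,e_i\rangle + \tfrac12\sqrt{A_i}$.

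\textbf{Verifying (P2).} First, $\sum_{e\in\bbV}u(e) = \sum_{i=1}^{d}(u(e_i)+u(-e_i)) = \sum_{i=1}^d \sqrt{A_i} = f(C_{y,\alpha})=1$ by the very definition of $C_{y,\alpha}$, so $u$ is a probability vector and $Q$ is well defined. Next, for each $i$ the square-root contributions in $u(e_i)$ and $u(-e_i)$ are identical (since $\alpha(e_i)\alpha(-e_i)=\alpha(-e_i)\alpha(e_i)$), so they cancel in the difference and
\[
E^Q_x(X_{n+1}-X_n)=\sum_{i=1}^d (u(e_i)-u(-e_i))\,e_i = \sum_{i=1}^d \langle y,e_i\rangle\, e_i = y.
\]

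\textbf{Verifying (P3).} The whole point of the choice \eqref{eq:w1} is the single-step identity
\begin{equation}\label{eq:keystep}
u(e_i)u(-e_i) = \tfrac{1}{4}\bigl(A_i - |\langle y,e_i\rangle|^2\bigr) = C_{y,\alpha}\,\alpha(e_i)\alpha(-e_i),
\end{equation}
which I would verify first by expanding the product as a difference of squares. Together with the definition \eqref{eq:deftheta} of $\theta_{y,\alpha}$, \eqref{eq:keystep} is equivalent to the step-wise equality
\[
u(e) = \sqrt{C_{y,\alpha}}\;\alpha(e)\,e^{\langle\theta_{y,\alpha},e\rangle}\qquad \text{for every }e\in\bbV,
\]
since for $e=e_i$ this holds by definition of $\theta_{y,\alpha}$, and for $e=-e_i$ it follows from \eqref{eq:keystep}. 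Expanding $E^Q_0$ as a sum over nearest-neighbour paths $(0=x_0,x_1,\dots,x_n)$ and using $\xi(x,e)=\omega(x,e)/\alpha(e)$,
\[
E^Q_0\Bigl(e^{\langle\theta,X_n\rangle}\prod_{j=1}^n \xi(X_{j-1},\Delta_j)\Bigr)
= \sum_{(x_j)} e^{\langle\theta,x_n\rangle}\prod_{j=1}^n \frac{u(\Delta_j)}{\alpha(\Delta_j)}\,\omega(x_{j-1},\Delta_j).
\]
By the step-wise identity, $\prod_j u(\Delta_j)/\alpha(\Delta_j) = C_{y,\alpha}^{n/2}\prod_j e^{\langle\theta_{y,\alpha},\Delta_j\rangle} = C_{y,\alpha}^{n/2}\, e^{\langle\theta_{y,\alpha},x_n\rangle}$, and the remaining sum over paths is exactly $E_{0,\omega}(e^{\langle\theta+\theta_{y,\alpha},X_n\rangle})$, proving \eqref{eq:rep1a}. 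The averaged identity \eqref{eq:rep2a} follows from \eqref{eq:rep1a} by taking $\E$ on both sides and applying Fubini.

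\textbf{Verifying (P1).} This is the step that requires genuine uniformity in $\bbP\in\mathcal{P}_\kappa$, and is where I expect the one mildly technical estimate to lie: I need to bound $C_{y,\alpha}$ from below, uniformly in $\alpha$. Using $4\alpha(e_i)\alpha(-e_i)\le(\alpha(e_i)+\alpha(-e_i))^2\le 1$ and $\sqrt{a+b}\le\sqrt a+\sqrt b$, I estimate $f(C)\le |y|_1 + d\sqrt{C}$, so the equation $f(C_{y,\alpha})=1$ forces
\[
\sqrt{C_{y,\alpha}}\;\ge\;\frac{1-|y|_1}{d} \;>\;0,
\]
a bound depending only on $y$ and $d$ (and in particular independent of $\bbP\in\mathcal{P}_\kappa$). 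Now the key identity \eqref{eq:keystep} together with Assumption A gives $u(e)u(-e) = C_{y,\alpha}\alpha(e)\alpha(-e) \ge C_{y,\alpha}\kappa^2$, and since $u(e)\le 1$ we conclude $u(\pm e)\ge C_{y,\alpha}\kappa^2\ge \left(\tfrac{1-|y|_1}{d}\right)^2\kappa^2 =: c_\kappa$ for every $e\in\bbV$ and every $\bbP\in\mathcal{P}_\kappa$, finishing the proof.
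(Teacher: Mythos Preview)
Your proof is correct. For (P2) and (P3) your argument is essentially the same as the paper's, just streamlined: the paper routes (P3) through an auxiliary walk $Q^u$ with transitions proportional to $u(e)/\alpha(e)$, proves the path identity for $Q^u$, and then collapses the extra normalizing constant; you instead observe directly that the product identity $u(e_i)u(-e_i)=C_{y,\alpha}\alpha(e_i)\alpha(-e_i)$ is equivalent to the step-wise formula $u(e)=\sqrt{C_{y,\alpha}}\,\alpha(e)\,e^{\langle\theta_{y,\alpha},e\rangle}$ and expand the path sum once. This is the same mechanism, presented more economically.

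The real difference is (P1). The paper argues softly: $\alpha\mapsto C_{y,\alpha}$ is continuous (implicit function theorem), hence so is $\alpha\mapsto u(e)$, and positivity of the infimum follows from compactness of $\mathcal{M}_1^{(\kappa)}(\bbV)$. Your approach is quantitative: you bound $f(C)\le |y|_1 + d\sqrt{C}$ to force $C_{y,\alpha}\ge \bigl((1-|y|_1)/d\bigr)^2$, and then reuse the product identity together with $u(e)\le 1$ and $\alpha(e)\ge\kappa$ to extract $u(\pm e)\ge C_{y,\alpha}\kappa^2$. This buys you an explicit constant $c_\kappa=\kappa^2\bigl((1-|y|_1)/d\bigr)^2$ depending only on $y,d,\kappa$, whereas the paper's compactness argument is non-constructive. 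Both are valid; yours is more elementary and more informative.
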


\begin{proof} Since the mapping $\alpha \mapsto C_{y,\alpha}$ is continuous on $\mathcal{M}_1^{(\kappa)}(\bbV)$ (by the implicit function theorem, for example), we see that $\alpha \mapsto u(e)$ is also continuous for each $e \in \bbV$. In particular, since $\mathcal{M}_1^{(\kappa)}(\bbV)$ is compact, we see that $\inf_{\P \in \mathcal{P}_\kappa} u(e) = \inf_{\alpha \in \mathcal{M}^{(\kappa)}_1(\bbV)} u(e) >0$ for each $e \in \bbV$, which readily implies (P1). On the other hand, (P2) is immediate from the definition of the weights $u$ in \eqref{eq:w1}. Therefore, we focus on proving (P3). Notice that it will be enough to show \eqref{eq:rep1a}, as \eqref{eq:rep2a} follows immediately upon taking expectations on \eqref{eq:rep1a} with respect to  $\mathbb{P}$. To show \eqref{eq:rep1a}, we introduce yet another auxiliary random walk, whose law we will denote by $Q^{u}$, given by the transition probabilities
	\begin{equation}\label{eq:w3}
	Q^{u}(X_{n+1}= x + e \,|\, X_n = x) =\frac{ c_{y,\alpha}u(e)}{\alpha(e)}
	\end{equation} for each $e \in \bbV$ and $x \in \Z^d$, where the weights $u(e)$ are the same as before and $c_{y,\alpha}>0$ is a normalizing constant so that the transition probabilities for $Q^{u}$ in \eqref{eq:w3} add up to $1$. As before, we write $Q^{u}_x$ to denote the law of this random walk starting from a fixed $x \in \Z^d$ and use $E^{u}_x$ to denote the expectation with respect to $Q^{u}_x$. 
	
	Having introduced this second auxiliary random walk, the first step will be to show that
	\begin{equation}\label{eq:rep1}
	E^{u}_0\Big(\e^{\langle \theta,X_{n}\rangle}\prod_{j=1}^{n}\omega(X_{j-1},\Delta_j(X))\,;\,X_{n}=x\Big)=(c_{y,\alpha}\sqrt{C_{y,\alpha}})^{n}E_{0,\omega}(\e^{\langle \theta+\theta_{y,\alpha},X_{n}\rangle}\,;\,X_{n}=x),
	\end{equation} for every $\theta \in \R^d$ and $x \in \Z^d$, where $c_{y,\alpha}$ is as in \eqref{eq:w3}, $C_{y,\alpha}$ as in \eqref{eq:w1} and $\theta_{y,\alpha}$ is given by \eqref{eq:deftheta}. To this end, let us define a \textit{path of length $n$} to be any sequence $\bar{x}=(x_0,\dots,x_n)$ of $n + 1$ sites in $\Z^d$ satisfying that $x_j$ and $x_{j-1}$ are nearest neighbors for all $j=1,\dots,n$. Then observe that, for \eqref{eq:rep1} to hold, it is enough to show that 
	\begin{equation}\label{eq:eq}
	Q^{u}_0( (X_0,\dots,X_n) = \bar{x}) = (c_{y,\alpha}\sqrt{C_{y,\alpha}})^{n}\e^{\langle \theta_{y,\alpha}, x\rangle}
	\end{equation} for all paths $\bar{x}$ of length $n$ with $x_0=0$ and $x_n=x$. To check \eqref{eq:eq}, let us fix such a path $\bar{x}$ and denote by $\bar{x}^+_i$ the number of steps made by this path in direction $e_i$ and by $\bar{x}^-_i$ the number of those in direction $-e_i$. Then, since $\bar{x}^+_i = \bar{x}^{-}_i + \langle x,e_i\rangle$, by the Markov property we have that 
	\begin{align*}
	Q^{u,v}_0( (X_0,\dots,X_n) = \bar{x})&=c^n_{y,\mathbb{P}}\prod_{i=1}^d \left(\frac{u(e_i)}{\alpha(e_i)}\right)^{\bar{x}^+_i}\prod_{i=1}^d \left(\frac{u(-e_i)}{\alpha(-e_i)}\right)^{\bar{x}^-_i}\\
	&=c^n_{y,\mathbb{P}}\prod_{i=1}^d \left(\frac{u(e_i)u(-e_i)}{\alpha(e_i)\alpha(-e_i)}\right)^{\bar{x}^-_i}\prod_{i=1}^d \left(\frac{u(e_i)}{\alpha(e_i)}\right)^{\langle x,e_i\rangle}.
	\end{align*}
	Notice that, by construction of the weights $u$, one has that  $\frac{u(e_i)u(-e_i)}{\alpha(e_i)\alpha(-e_i)}=C_{y,\alpha}$ holds. Moreover, from the restriction $\sum_{i=1}^d (\bar{x}^+_i+\bar{x}^-_i)=n$ and the relation $\bar{x}^+_i = \bar{x}^{-}_i + \langle x,e_i\rangle$ for every $i=1,\dots,d$, it follows that $\sum_{i=1}^d \bar{x}^-_i = \tfrac{1}{2}(n-\sum_{i=1}^d \langle x, e_i\rangle)$. Hence, we obtain   
	\begin{equation}\label{eq:eq2}
	Q^{u}_0( (X_0,\dots,X_n) = \bar{x}) =(c_{y,\alpha}\sqrt{C_{y,\alpha}})^{n} \prod_{i=1}^d \left(\frac{u(e_i)}{\alpha(e_i)\sqrt{C_{y,\alpha}}}\right)^{\langle x,e_i\rangle}=(c_{y,\alpha}\sqrt{C_{y,\alpha}})^{n}\e^{\langle \theta_{y,\alpha}, x\rangle}.
	\end{equation} 
	
	Summing \eqref{eq:rep1} over all $x \in \Z^d$ yields
	\begin{equation}\label{eq:rep2}
	E^{u}_0\Big(\e^{\langle \theta,X_{n}\rangle}\prod_{j=1}^{n}\omega(X_{j-1},\Delta_j(X))\Big)=(c_{y,\alpha}\sqrt{C_{y,\alpha}})^{n}E_{0,\omega}(\e^{\langle \theta+\theta_{y,\alpha},X_{n}\rangle}).
	\end{equation} Finally, we conclude \eqref{eq:rep1a} from \eqref{eq:rep2} upon noticing that, by definition of $Q$ and $Q^{u}$, 
	\[
	E^{u}_0\Big(\e^{\langle \theta,X_{n}\rangle} \prod_{j=1}^{n}\omega(X_{j-1},\Delta_j(X)) \Big)=c_{y,\alpha}^{n}E^Q_0\Big(\e^{\langle \theta, X_{n}\rangle}\prod_{j=1}^{n}\xi(X_{j-1},\Delta_j (X))\Big).
	\] This completes the proof.
\end{proof}

As a consequence of Lemma \ref{lemma:prop}, we immediately get the following corollary.

\begin{cor}\label{cor:exist} For $\theta \in \R^d$, the quantities
	\begin{equation}\label{eq:mgf2}
	\overline{\Lambda}_{q}(\theta):=\lim_{n\to \infty}\frac{1}{n}\log E^Q_0\Big(\e^{\langle \theta, X_{n}\rangle} \prod_{j=1}^{n}\xi(X_{j-1},\Delta_j(X))\Big),
	\end{equation} and
	\begin{equation}\label{eq:mgf1}
	\overline{\Lambda}_{a}(\theta):=\lim_{n\to \infty}\frac{1}{n}\log E^Q_0\Big(\e^{\langle \theta, X_{n}\rangle} \E\prod_{j=1}^{n}\xi(X_{j-1},\Delta_j(X))\Big)
	\end{equation}
	are well-defined, i.e. the limits in \eqref{eq:mgf2} and \eqref{eq:mgf1} both exist, are finite and the right-hand side of \eqref{eq:mgf2} is $\mathbb{P}$-almost surely constant. 	
\end{cor}

\begin{proof} It follows from \eqref{eq:rep1a} and \eqref{eq:rep2a} that, for any $\theta \in \R^d$,
	\begin{equation}\label{eq:eqmgf}
	\overline{\Lambda}_{q}(\theta)= \log(\sqrt{C_{y,\alpha}}) + \Lambda_q(\theta+\theta_{y,\alpha}) \hspace{1cm}\text{ and }\hspace{1cm}\overline{\Lambda}_{a}(\theta)= \log(\sqrt{C_{y,\alpha}}) + \Lambda_a(\theta+\theta_{y,\alpha}), 
	\end{equation} where $\Lambda_{q}(\theta):=\lim_{n\to\infty}\frac{1}{n}\log E_{0,\omega}(\e^{\langle \theta,X_{n}\rangle})$ and $\Lambda_{a}(\theta):=\lim_{n\to\infty}\frac{1}{n}\log E_{0}(\e^{\langle \theta,X_{n}\rangle})$ respectively denote the quenched and annealed limiting logarithmic moment generating functions associated with the RWRE. Since both $\Lambda_q$ and $\Lambda_a$ are well-defined in the sense described in the statement of Corollary \ref{cor:exist} (see \cite[Theorem 2.6]{RS14} for the quenched case and, in the annealed case, this follows from \cite[Theorem 3.2]{V03} and Varadhan's Lemma \cite[Theorem 4.3.1]{DZ98}), we see that $\overline{\Lambda}_{q}(\theta)$ and $\overline{\Lambda}_{a}(\theta)$ are so as well.	
\end{proof}

The following remark contains some crucial estimates that we will use extensively in the sequel. 

\begin{remark}\label{rem:bound} Given any $\theta,\theta' \in \R^d$ and environmental law $\mathbb{P}$, for any $n \geq 1$ we have 
	\[
	|\overline{\Lambda}_{a}(\theta)-\overline{\Lambda}_{a}(\theta')|\leq |\theta-\theta'| \hspace{1cm}\text{ and }\hspace{1cm} \e^{-\overline{\Lambda}_{a}(0)n}\prod_{j=1}^n \xi(X_{j-1},\Delta_j(X)) \leq \e^{h(\mathrm{dis}(\P))n}\quad \P\text{-a.s.},
	\] where $h(x):=\log \big(\frac{1+x}{1-x}\big)$ for $x \in [0,1)$. The proof of these inequalities is elementary, so we omit it. Nevertheless, from now onwards we will assume that $\mathrm{dis}(\P) < 1$ so that the expression $h(\mathrm{dis}(\P))<1$, which will appear numerous times in the sequel, is always well-defined. This does not represent any real loss of generality since we shall always be interested in environmental laws with small enough disorder.
\end{remark}

The main objective in Section \ref{sec:auxiliary} is to show that $\overline{\Lambda}_{a}(\theta)=\overline{\Lambda}_{q}(\theta)$ for $\theta$ close enough to $0$, whenever the disorder of the environment is sufficiently low. We will later see in Section \ref{sec:Hessian} that, in turn, this will imply that $I_{q}(x)=I_{a}(x)$ for $x$ sufficiently close to $y$. To carry out all this, we shall exploit a renewal structure available for the $Q$-random walk. We introduce this renewal structure next.

\subsection{A renewal structure for the $Q$-random walk}\label{sec:renewal} 
Let us first fix a direction $\ell \in \bbV$ such that $E^Q_0(\langle X_1 , \ell\rangle) > 0$. Notice that such a direction always exists since $E^Q_0(\langle X_1 , \ell\rangle)=\langle y,\ell\rangle$ by Lemma \ref{lemma:prop} and $y \neq 0$ by assumption. We then set for $u \in \R$, 
\begin{align*}
H_{u}:&=\inf\{n\geq 1: \langle X_{n},\ell \rangle>u \},\,\,\,
S_0:= 0,\,\,\,\,\,
\beta_0:=\inf\{n \geq 1: \langle X_n,\ell\rangle <\langle X_{0},\ell \rangle\},\,\,\,
R_0:= \langle X_0, \ell\rangle
\end{align*} and define the sequences of stopping times $(S_k)_{k \in \N_0},(\beta_k)_{k \in \N_0}$ and $(R_k)_{k \in \N_0}$ inductively as 
\begin{align*}
S_{k+1}&:=H_{R_{k}},\,\,\, 
\beta_{k+1}:=\inf\{n > S_{k+1} : \langle X_n,\ell\rangle <\langle X_{S_{k+1}},\ell \rangle\},\\
R_{k+1}&:=\begin{cases}\sup\{\langle X_{n},\ell\rangle: 0\leq n\leq \beta_{k+1}\} & \text{ if }\beta_{k+1}<\infty\\ \langle X_{S_{k+1}},\ell\rangle &\text{ if }\beta_{k+1}=\infty,\end{cases}
\end{align*} with the convention that $\inf \emptyset = \infty$. Observe that, by choice of $\ell$ and the law of large numbers, we have $\lim_{n \to \infty} \langle X_n , \ell \rangle = \infty$ $Q$-almost surely. In particular, this implies that 
\[
R_k < \infty \,\, Q\text{-a.s.} \Longrightarrow S_{k+1} < \infty \,\, Q\text{-a.s.} \Longrightarrow R_{k+1} < \infty \,\, Q\text{-a.s.}, 
\] so that by induction all $S_k$ and $R_k$ are finite $Q$-almost surely. However, the $\beta_k$ will not all be. Thus, we define the sequence $(\tau_k)_{k \in \N_0}$ of \textit{renewal times} as 
\[
\tau_k:=S_{W_k},
\] where $(W_k)_{k \in \N_0}$ is defined inductively by first taking $W_0:=0$ and then setting
\[
W_{k+1}:= \inf \{ n > W_k : \beta_n = \infty\}.
\] That the renewal times $\tau_k$ are well-defined is a consequence of the fact that all $W_k$ are $Q$-a.s. finite, which in turn follows from the Markov property and Lemma \ref{lemma:pos} below. 

\begin{lemma} \label{lemma:pos} There exists $\overline{c}=\overline{c}(y) > 0$ such that $Q(\beta_0=\infty)>\overline{c}$ for any $\P \in \mathcal{P}_\kappa$, where $Q=Q(y,\alpha)$ is the law of the $Q(y,\alpha)$-random walk with jump weights given by \eqref{eq:w1}.
\end{lemma}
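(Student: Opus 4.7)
The plan is to reduce the claim to a classical one-dimensional random walk estimate. Since the $Q$-random walk has i.i.d.\ increments with distribution $u$ on $\bbV$, the projected process $Y_n := \langle X_n - X_0, \ell\rangle$ is itself a one-dimensional random walk on $\Z$: because $\ell \in \bbV$, each increment takes the value $+1$ with probability $p := u(\ell)$, the value $-1$ with probability $q := u(-\ell)$, and the value $0$ with probability $1 - p - q$. By the very definition of $\beta_0$, I then have $Q(\beta_0 = \infty) = Q(Y_n \geq 0 \text{ for all } n \geq 1 \mid Y_0 = 0)$, the classical survival probability of a lazy $\pm 1$-walk above its starting level.

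The key observation is that the drift $p - q$ of this projected walk is in fact \emph{independent} of the environmental law $\bbP$. Indeed, since both $\alpha(\ell)\alpha(-\ell)$ and $|\langle y, \pm\ell\rangle|^2$ are invariant under $\ell \mapsto -\ell$, the square roots appearing in the expressions \eqref{eq:w1} for $u(\ell)$ and $u(-\ell)$ coincide and cancel, leaving
\[
u(\ell) - u(-\ell) = \tfrac{1}{2}\big(\langle y, \ell\rangle - \langle y, -\ell\rangle\big) = \langle y, \ell\rangle,
\]
a strictly positive constant depending only on $y$ (consistent with property (P2) of Lemma \ref{lemma:prop}). Thus $p > q$ and $p - q = \langle y, \ell\rangle$.

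To finish, I would carry out the standard gambler's ruin computation for $(Y_n)$: looking for bounded solutions of the form $f(k) = A + B(q/p)^k$ of the harmonic recursion satisfied by $f(k) := Q(Y_n \geq 0 \ \forall\, n \geq 0 \mid Y_0 = k)$, together with the boundary relation $(p+q)f(0) = p f(1)$ and $f(\infty) = 1$, yields $f(k) = 1 - (q/p)^{k+1}$ and hence
\[
Q(\beta_0 = \infty) = f(0) = 1 - \frac{q}{p} = \frac{\langle y, \ell\rangle}{u(\ell)} \geq \langle y, \ell\rangle,
\]
since $u(\ell) \leq 1$. Setting $\overline{c}(y) := \langle y, \ell\rangle / 2 > 0$ therefore gives the desired uniform strict inequality for every $\bbP \in \mathcal{P}_\kappa$. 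The argument has no serious obstacle: all of the work was done by the construction of the weights $u$, which are engineered precisely so that the projected drift no longer sees the environment; the uniformity in $\bbP$ is then automatic from the trivial estimate $u(\ell) \leq 1$, and property (P1) is not even needed for this particular step.
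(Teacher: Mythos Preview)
Your argument is correct and in fact cleaner than the paper's. Both proofs reduce to a one-dimensional gambler's ruin estimate for the projection $\langle X_n,\ell\rangle$, but they differ in how the uniformity over $\bbP\in\mathcal P_\kappa$ is obtained. The paper first couples the projected walk with an auxiliary non-lazy $\pm1$-walk $(Z_n)$ whose backward probability $q=-\tfrac{\langle y,\ell\rangle}{2}+\tfrac12\sqrt{|\langle y,\ell\rangle|^2+1}$ depends only on $y$; to justify the coupling it needs the inequality $C_{y,\alpha}<\tfrac{1}{4\alpha(\ell)\alpha(-\ell)}$, deduced from $f\big(\tfrac{1}{4\alpha(\ell)\alpha(-\ell)}\big)>1$, which yields $u(-\ell)\leq q$ and hence $Q(\beta_0=\infty)\geq 1-q/(1-q)$.

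You bypass both the auxiliary walk and the estimate on $C_{y,\alpha}$ by observing that although $p=u(\ell)$ and $q=u(-\ell)$ separately depend on $\alpha$, the square roots in \eqref{eq:w1} cancel so that $p-q=\langle y,\ell\rangle$ is a constant depending only on $y$; the lazy-walk gambler's ruin then gives $Q(\beta_0=\infty)=1-q/p=(p-q)/p\geq\langle y,\ell\rangle$ since $p\leq 1$. This is both shorter and yields an explicit, arguably sharper, constant. The paper's detour buys nothing extra here; your route exploits property (P2) more directly.
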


\begin{proof} Let $(Z_n)_{n \in \N_0}$ be the random walk on $\Z$ which starts from $\langle X_0, \ell\rangle$ and, at each step $n \in \N_0$, jumps one unit to the left with probability $q$ and one to the right with probability $p:=1-q$, where
	\[
	q:=-\frac{\langle y,\ell \rangle}{2}+\frac{1}{2}\sqrt{|\langle  y,\ell\rangle|^{2}+1}. 
	\] Observe that, since $f(1/4\alpha(\ell)\alpha(-\ell)) > 1$ where $f$ is as in \eqref{def:f}, we have that $C_{y,\alpha}< \tfrac{1}{4\alpha(\ell)\alpha(-\ell)}$ and thus $u(\ell) \leq q$.  It follows that we may couple $(Z_n)_{n \in \N_0}$ with our $Q$-random walk in such a way that, for all $n \in \N_0$,
	\[
	Z_n \leq \langle X_n,\ell \rangle \Longrightarrow  Z_{n+1} \leq \langle X_{n+1},\ell \rangle.
	\] In particular, if we denote $\beta_0(Z):=\inf \{n \geq 1 : Z_n < Z_0\}$ then $P(\beta_0(Z) = \infty) \leq Q(\beta_0 =\infty)$. But, since $q < \tfrac{1}{2}$ by Minkowski's inequality, by standard gambler's ruin estimates we have
	\[P(\beta_0(Z) = \infty)= 1 - \tfrac{q}{p} =:\overline{c}.
	\] This concludes the proof.
\end{proof}

It follows from this construction above that all renewal times $\tau_k$ are $Q$-a.s. finite, that $(X_{\tau_{1}},\tau_{1})$ is independent of the sequence $(X_{\tau_{k+1}}-X_{\tau_{k}},\tau_{k+1}-\tau_{k})_{k \geq 1}$ and that this last sequence is i.i.d. with common law given by that of  $(X_{\tau_{1}},\tau_{1})$ conditioned on the event $\{\beta_0=\infty \}$. We now investigate some (uniform in $\P$) integrability properties of these renewal times.


\begin{lemma}\label{lemma2} There exists $\rho_1=\rho_1(y) > 0$ such that $E^Q_0(\e^{\rho \langle X_{\tau_{1}},\ell \rangle })\leq \frac{3}{\overline{c}}$ for all $\rho < \rho_1$ and any $\P \in \mathcal{P}_\kappa$, where $\overline{c}$ is the constant from Lemma \ref{lemma:pos}.
\end{lemma}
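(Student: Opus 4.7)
The plan is to exploit the renewal structure of the $Q$-walk together with standard one-dimensional gambler's ruin estimates for the projected walk $Z_n := \langle X_n, \ell\rangle$, and then verify that all constants can be chosen uniformly in $\bbP \in \mathcal{P}_\kappa$.

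By the strong Markov property at each stopping time $S_k$ combined with the translation invariance of the deterministic $Q$-environment, the events $B_k := \{\beta_k = \infty\}$ are i.i.d.\ Bernoulli trials with common success probability $\overline{c}' := Q_0(\beta_0 = \infty)$, which by Lemma~\ref{lemma:pos} satisfies $\overline{c}' \geq \overline{c}(y)$ uniformly in $\bbP$. Hence $W_1$ is geometric with parameter $\overline{c}'$, and on $\{B_k^c\}$ the overshoots $U_k := R_k - R_{k-1}$ are i.i.d.\ with common law $\mathcal{L}(1 + V \mid \beta_0 < \infty)$, where $V := \sup_{0 \leq n \leq \beta_0} Z_n$ (starting from $0$). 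Writing
\[
\langle X_{\tau_1}, \ell\rangle \;=\; R_{W_1 - 1} + 1 \;=\; W_1 + \sum_{k=1}^{W_1 - 1} V_k, \qquad V_k := U_k - 1,
\]
and summing the resulting geometric series in $W_1$ yields the closed form
\[
E^Q_0\bigl(e^{\rho \langle X_{\tau_1}, \ell\rangle}\bigr) \;=\; \frac{\overline{c}' \, e^\rho}{1 - (1-\overline{c}') e^\rho g(\rho)}, \qquad g(\rho) := E^Q_0\bigl(e^{\rho V} \,\bigl|\, \beta_0 < \infty\bigr).
\]

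To bound $g(\rho)$ I would use gambler's ruin for $Z_n$, which is a lazy nearest-neighbor walk on $\Z$ with $\pm 1$ probabilities $u(\pm \ell)$ and strictly positive drift $u(\ell) - u(-\ell) = \langle y, \ell\rangle > 0$ by property P2 of Lemma~\ref{lemma:prop}. Setting $\pi := u(-\ell)/u(\ell) < 1$, the classical formula gives $Q_m(\inf\{n \geq 1 : Z_n < 0\} < \infty) = \pi^{m+1}$; strong Markov at the hitting times $T_k := \inf\{n : Z_n = k\}$ then yields $Q_0(V \geq k \mid \beta_0 < \infty) \leq \pi^k$, and a routine summation by parts produces
\[
g(\rho) \;\leq\; \frac{1 - \pi}{1 - \pi e^\rho}, \qquad \text{valid whenever } \pi e^\rho < 1.
\]

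Plugging this back into the MGF formula (and using the identity $\overline{c}' = 1 - \pi$) gives
\[
E^Q_0\bigl(e^{\rho \langle X_{\tau_1}, \ell\rangle}\bigr) \;\leq\; \frac{(1-\pi)\, e^\rho\, (1 - \pi e^\rho)}{(1-\pi)^2 - \pi(2-\pi)(e^\rho - 1)}.
\]
The two uniform inputs needed to conclude are $1 - \pi \geq \overline{c}$ (Lemma~\ref{lemma:pos}) and $\pi \leq \pi_0(y) := 1 - \langle y, \ell\rangle < 1$, the latter because $1 - \pi = \langle y, \ell\rangle / u(\ell) \geq \langle y, \ell\rangle$ (using $u(\ell) \leq 1$). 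Taking $\rho_1 := \min\bigl((1-\pi_0(y))^2/4,\, \log(3/2)\bigr) > 0$ (depending only on $y$), for any $\rho < \rho_1$ the estimate $e^\rho - 1 \leq 2\rho$ forces the denominator to be at least $(1-\pi)^2/2$, so the whole expression is at most $2 e^\rho/(1-\pi) \leq 3/\overline{c}$, proving the claim. The most delicate part I anticipate is rigorously justifying the i.i.d.\ decomposition of the overshoots $(U_k)$ via the strong Markov property and the homogeneity of the $Q$-walk, and making sure every constant appearing in the final estimate depends only on $y$ (through $\overline{c}$ and $\pi_0$) and not on $\bbP$.
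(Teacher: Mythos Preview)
Your proof is correct and follows the same overall architecture as the paper's: decompose $\langle X_{\tau_1},\ell\rangle$ according to the value of $W_1$, reduce to a geometric series whose ratio is $E^Q_0(e^{\rho(1+\mathcal R)};\beta_0<\infty)=(1-\overline c')e^\rho g(\rho)$, and then show that this ratio is strictly below $1$ uniformly in $\bbP$ for small $\rho$. The i.i.d.\ structure of the overshoots and the identity $\overline c'=1-\pi$ with $\pi=u(-\ell)/u(\ell)$ are justified exactly as you indicate, via the strong Markov property at the $S_k$'s and the fact that $Z_n=\langle X_n,\ell\rangle$ is a lazy biased nearest-neighbor walk (laziness does not affect hitting probabilities).

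The one genuine difference from the paper is the tail estimate for the overshoot $V=\mathcal R$. The paper bounds $Q_0(\mathcal R\geq n,\,\beta_0<\infty)\leq Q_0(n\leq\beta_0<\infty)\leq\sum_{k\geq n}Q_0(\langle X_k,\ell\rangle<0)$ and then applies Azuma--Hoeffding to the martingale $M_n=\langle X_n,\ell\rangle-n\langle y,\ell\rangle$. You instead use the exact gambler's-ruin identity $Q_0(V\geq k\mid\beta_0<\infty)=Q_0(T_k<T_{-1})\,\pi^k\leq\pi^k$, which yields the closed form $g(\rho)\leq(1-\pi)/(1-\pi e^\rho)$. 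Your route is more elementary and gives sharper explicit constants, at the cost of relying on the nearest-neighbor structure of the $\ell$-projection; the paper's Azuma--Hoeffding argument is less sharp but would extend verbatim to walks with bounded (non--nearest-neighbor) increments. Both approaches deliver the required uniformity in $\bbP$, since your $\pi$ satisfies $1-\pi=\overline c'\geq\overline c(y)$ by Lemma~\ref{lemma:pos}.
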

\begin{proof} By splitting $E^Q_0(\e^{\rho \langle X_{\tau_{1}},\ell\rangle })$ according to the value of $W_1$, we obtain the bound
	\begin{equation}\label{eq:d1}
	E^Q_0(\e^{\rho \langle X_{\tau_{1}},\ell\rangle }) = \sum_{k=1}^\infty E^Q_0(\e^{\rho \langle X_{\tau_{1}},\ell\rangle }\,;\, W_1 = k) \leq \sum_{k=1}^\infty E^Q_0(\e^{\rho \langle X_{S_k},\ell\rangle }\,;\, \beta_j < \infty \text{ for }j=1,\dots,k). 
	\end{equation} Observe that $|\langle X_{S_1}, \ell\rangle| \leq 1 + |X_0|$  by definition of $R_0$ and the fact that the walk is nearest neighbor, so that the first term in the sum on the right-hand side of \eqref{eq:d1} is bounded from above by $\e^\rho$. 
	
	On the other hand, since $R_{k-1}= \sup \{ \langle X_n , \ell \rangle : S_{k-1} \leq n \leq \beta_{k-1}\}$ when $\beta_{k-1}< \infty$ and $k \geq 2$, by writing $\langle X_{S_k},\ell\rangle = \langle X_{S_{k-1}},\ell\rangle + \langle X_{S_{k}}-X_{S_{k-1}},\ell\rangle$ and using the Markov property at time $S_{k-1}$, we see that for $k \geq 2$ the $k$-th term in the right-hand side of \eqref{eq:d1} is bounded from above by
	\[
	E^Q_0(\e^{\rho \langle X_{S_{k-1}},\ell\rangle }\,;\, \beta_j < \infty \text{ for }j=1,\dots,k-1) E^Q_0( \e^{\rho(1+\mathcal{R})} \,;\,\beta_0 < \infty), 
	\]
	where $\mathcal{R}:=\sup \{\langle X_{n},\ell \rangle : 0 \leq n \leq \beta_0 \}$. Repeating this argument all the way down to $\langle X_{S_1},\ell\rangle$ and then using the bound for the case $k=1$ yields the bound 
	\begin{equation} \label{eq:d1star}
	E^Q_0(\e^{\rho \langle X_{\tau_{1}},\ell\rangle }) \leq \e^{\rho}\sum_{k=1}^\infty \left( E^Q_0\left(\e^{\rho (1+\mathcal{R})}\,;\, \beta_0<\infty \right)\right)^{k-1}.
	\end{equation}
	Therefore, in order to complete the proof we only need to show that, for $\rho$ small enough depending only on $y$, we have 
	\begin{equation}\label{eq:b1}
	E^Q_0\left(\e^{\rho (1+\mathcal{R})}\,;\,\beta_0<\infty \right)<1-\frac{\overline{c}}{2}.
	\end{equation} But, by the union bound and Lemma \ref{lemma:pos}, for any $N \geq 1$ the expectation on the left-hand side of \eqref{eq:b1} is bounded from above by
	\begin{align*}
	\e^{\rho N}Q_0(\beta_0<\infty) &+ \sum_{n=N}^\infty \e^{\rho(2+n)}Q_0( n \leq \mathcal{R}< n+1\,,\,\beta_0 <\infty)\\& \leq \e^{\rho N}(1-\overline{c}) +\sum_{n=N}^\infty \e^{\rho(2+n)}Q_0( \mathcal{R}\geq n\,,\,\beta_0 <\infty).
	\end{align*} Now, observe that for $n \geq 1$
	\begin{align*}
	Q_0(\mathcal{R}\geq n \,,\, \beta_0<\infty) &\leq Q_0(n\leq \beta_0<\infty)\\
	&=\sum_{k=n}^{\infty}Q_0(\beta_0=i)\leq \sum_{k=n}^{\infty}Q_0(\langle X_{k},\ell\rangle <0)\leq \frac{\e^{-\tfrac{1}{8}|\langle y,\ell\rangle|^2 n}}{1-\e^{-\tfrac{1}{8}|\langle y,\ell\rangle|^2}},
	\end{align*} where to obtain the last inequality we have used the bound $Q_0(\langle X_{k},\ell\rangle <0) \leq \e^{-\tfrac{1}{8}|\langle y,\ell\rangle|^2k}$, which follows from the (one-sided) Azuma-Hoeffding inequality for the martingale $(M_n)_{n \in \N_0}$ given by $M_n:= \langle X_n ,\ell \rangle - n\langle y,\ell\rangle$ (whose increments are bounded by $2$). Thus, we see that, for any $N \geq 1$, 
	\[
	E^Q_0\left(\e^{\rho (1+\mathcal{R})}\,;\,\beta_0<\infty \right) \leq \e^{\rho(N \wedge 2)}\Bigg(1-\overline{c} +\frac{\e^{-\tfrac{1}{8}|\langle y,\ell\rangle|^2N}}{\big(1-\e^{-\tfrac{1}{8}|\langle y,\ell\rangle|^2}\big)^2}\Bigg)
	\] from where \eqref{eq:b1} now follows by taking first $N$ sufficiently large and then $\rho$ accordingly small.
\end{proof}

As a consequence of Lemma \ref{lemma2}, we obtain (uniform in $\P$) exponential moments for $\tau_1$.

\begin{prop}\label{prop:exp_moments} There exists $\gamma_{0}=\gamma_0(y) > 0$ such that $E^Q_0(\e^{\gamma \tau_1}) \leq 2$ for all $\gamma \leq  \gamma_{0}$ and any $\P \in \mathcal{P}_\kappa$. 
\end{prop}

\begin{proof} For $n \geq 1$, by the union bound we have
	\[
	Q_0(\tau_{1}>n) \leq Q_0\left(\langle X_{\tau_{1}},\ell \rangle >\frac{\langle y, \ell\rangle}{2}\right)+Q_0\left(\tau_{1}>n\,,\,\langle X_{\tau_{1}},\ell\rangle \leq \frac{\langle y, \ell\rangle n}{2}\right).
	\]
	Using the exponential Tchebychev inequality and Lemma \ref{lemma2}, we have 
	\[
	Q_0\left(\langle X_{\tau_{1}},\ell \rangle >\frac{\langle y, \ell\rangle}{2}\right) \leq \e^{-\rho\langle y,\ell\rangle n} E^Q_0( \e^{2\rho\langle X_{\tau_1},\ell\rangle}) \leq C\e^{-\rho\langle y,\ell\rangle n} 
	\] for some $C,\rho > 0$ depending only on $y$. On the other hand, by definition of $\tau_1$ we have \begin{align*}
	Q_0\left(\tau_{1}>n\,,\,\langle X_{\tau_{1}},\ell\rangle \leq \frac{\langle y, \ell\rangle n}{2}\right)\leq Q_0\left(\langle X_{n},\ell\rangle \leq \frac{\langle y, \ell \rangle n}{2} \right) \leq  \e^{-\tfrac{1}{32}|\langle y , \ell\rangle|^2n},
	\end{align*} where to obtain the last inequality we have used the (one-sided) Azuma-Hoeffding inequality for the martingale $(M_n)_{n \in \N_0}$ as in the proof of Lemma \ref{lemma2}. Hence, we see that there exist $C,\gamma> 0$ depending only on $y$ such that $Q_0(\tau_{1}>n)\leq C\e^{-\gamma n}$ for all $n \geq 1$. From this the result now follows by an argument similar to the one used to derive \eqref{eq:b1}. 
\end{proof}

Finally, the above regeneration structure, together with Remark \ref{rem:bound}, allows us to deduce analyticity of $\overline{\Lambda}_a$.

\begin{prop}\label{prop:reglambdaa} There exists $\gamma_1>0$  (determined by Proposition \ref{prop:norm1} below), 
such that if $\mathrm{dis}(\P) < \gamma_1$ then the mapping $\theta \mapsto \overline{\Lambda}_a(\theta)$ is analytic on the set $\{\theta:|\theta|<\gamma_1\}$.
\end{prop}

\begin{proof} 
Consider the function $\Psi:\R^d \times \R \to \R$ defined as
	\[
	\Psi(\theta,r):=\overline{E}^{Q}_0\Big(\e^{\langle \theta, X_{\tau_1}\rangle -r\tau_1}\E\prod_{j=1}^{\tau_1}\xi(X_{j-1},\Delta_j(X))\Big)
	\] where $\overline{E}^Q_0$ above stands for expectation with respect to $\overline{Q}_0$, the law $Q_0$ conditioned on the event $\{\beta_0 = \infty\}$. By Remark \ref{rem:bound} we have that, whenever $r=\overline{\Lambda}_a(\theta)+\delta$ for some $\delta \in \R$, 
	\[
	\Big|\langle \theta, X_{\tau_1}\rangle -r\tau_1 + \log\E\prod_{j=1}^{\tau_1}\xi(X_{j-1},\Delta_j(X))\Big| \leq (2|\theta|+h(\mathrm{dis}(\P))+|\delta|)\tau_1
	\] so that, by choice of $\gamma_1$ (see the proof of Lemma \ref{lemma3} for details), we have
	\begin{equation}\label{eq:cotaanalitica}
	\overline{E}^{Q}_0 \Big(\tau_1 \exp\Big\{|\langle \theta, X_{\tau_1}\rangle -(\overline{\Lambda}_a(\theta)+\delta)\tau_1 + \log\E\prod_{j=1}^{\tau_1}\xi(X_{j-1},\Delta_j(X))|\Big\}\Big) < \infty
	\end{equation} whenever $|\theta| \vee \mathrm{dis}(\P) < \gamma_1$ and $|\delta|< \delta_c $ for some $\delta_c=\delta_c(y) > 0$ small enough. 
	It then follows from \eqref{eq:cotaanalitica}, dominated convergence and Remark \ref{rem:bound} once again that, when $\mathrm{dis}(\P)<\gamma_1$, $\Psi$ is analytic on the open set $\mathcal{C}_y:=\{ (\theta,r): |\theta|< \gamma_1\,,\,|r-\overline{\Lambda}_a(\theta)|<\delta_c\}$ with series expansion given by 
	\[
	\Psi(\theta,r)= \sum_{n=0}^\infty \frac{\overline{E}^Q_0( (\langle \theta, X_{\tau_1}\rangle -r\tau_1)^n)}{n!}
	\] and $\partial_r \Psi$ given by 
	\begin{equation}\label{eq:cotaderiv}
	\partial_r\Psi(\theta,r)=-\overline{E}^{Q}_0\Big(\tau_1\e^{\langle \theta, X_{\tau_1}\rangle -r\tau_1}\E\prod_{j=1}^{\tau_1}\xi(X_{j-1},\Delta_j(X))\Big).
	\end{equation} But observe that $\Psi(\theta,\overline{\Lambda}_a(\theta))=1$ whenever $|\theta| \vee \mathrm{dis}(\P) < \gamma_1$ by Proposition \ref{prop:norm1}, which in turn implies that $-\partial_r\Psi(\theta,\overline{\Lambda}_a(\theta)) \geq \Psi(\theta,\overline{\Lambda}_a(\theta))=1>0$ by \eqref{eq:cotaderiv}. Therefore, the analyticity of $\overline{\Lambda}_a(\theta)$ for $|\theta|< \gamma_1$ whenever $\mathrm{dis}(\P) < \gamma_1$ now follows from the analytic implicit function theorem, see \cite[Theorem 6.1.2]{KP02}.  
\end{proof}

\subsection{Equality of $\overline{\Lambda}_{q}$ and $\overline{\Lambda}_{a}$: the main argument} We now describe the main steps in the proof of the equality of $\overline{\Lambda}_{a}(\theta)$ and $\overline{\Lambda}_{q}(\theta)$ for $\theta$ close enough to $0$, whenever the disorder of the environment is sufficiently low. 
The more technical details are deferred to a separate section. We begin by introducing the key object in our analysis. 

\begin{definition} Given $n \geq 1$, $\theta \in \R^d$ and an environment $\omega$, we define
	\begin{equation}\label{gn_def}
	\Phi_{n}(\theta,\omega):=\overline{E}^Q_0\Big(\e^{\langle \theta, X_{L_{n}}\rangle-\overline{\Lambda}_{a}(\theta)L_{n}}\prod_{j=1}^{L_{n}}\xi(X_{j-1},\Delta_j(X)),L_{n}=\tau_{k}\text{ for some } k \geq 1\Big),
	\end{equation} where, as before, $\overline{E}^Q_0$ above stands for expectation with respect to $\overline{Q}_0$, the law $Q_0$ conditioned on the event $\{\beta_0 = \infty\}$, and $L_{n}:=\inf\{n\geq 1: \langle X_{n}-X_{0},\ell\rangle=n\}$. Throughout the sequel we shall write $\Phi_n(\theta)$ instead of $\Phi_n(\theta,\omega)$ whenever we think of $\omega$ as being random (and therefore of $\Phi_n(\theta)$ as being a random variable).
\end{definition} The following two propositions contain the crucial information about the random variable $\Phi_n$. 

\begin{prop}\label{prop:norm1} There exists $\gamma_1=\gamma_1(y)>0$ such that, for any $\P \in \mathcal{P}_\kappa$, whenever $|\theta| \vee \mathrm{dis}(\mathbb{P}) < \gamma_1$ we have 
\begin{equation}\label{eq:m1}
\overline{E}^Q_0\Big(\e^{\langle \theta,X_{\tau_{1}}\rangle -\overline{\Lambda}_{a}(\theta) \tau_{1}}\E\prod_{j=1}^{\tau_{1}}\xi(X_{j-1},\Delta_j(X))\Big)=1
\end{equation} and 
\[\lim_{n\to\infty}\E \Phi_n(\theta) > 0.
\] 
\end{prop} 

\begin{prop}\label{prop:norm2} There exists $\gamma_{2}=\gamma_2(y,d,\kappa) > 0$ such that, for any $\P \in \mathcal{P}_\kappa$, whenever $|\theta| \vee \mathrm{dis}(\mathbb{P}) < \gamma_2$ we have 
	$
	\sup_{n\geq 1}\E (\Phi_{n}(\theta))^{2}<\infty.
	$
\end{prop}

The proofs of these propositions are deferred to Section \ref{sec:l12}. Let us first conclude 

\noindent{\bf Proof of $\overline\Lambda_q=\overline\Lambda_a$ (assuming Proposition \ref{prop:norm1} and Proposition \ref{prop:norm2}):}  Note that by Propositions \ref{prop:norm1}-\ref{prop:norm2}, whenever $|\theta|\vee \mathrm{dis}(\mathbb{P}) < \gamma_1 \wedge \gamma_2$ we have
\begin{equation}\label{eq:ineq1}
\mathbb{P}\Big( \lim_{n \to \infty} \Phi_n(\theta) = 0\Big) < 1.
\end{equation} Indeed, if $\Phi_n(\theta) \to 0$ $\mathbb{P}$-a.s. then $\lim_{n \to \infty}\E \Phi_{n}(\theta)= 0$ since $(\Phi_n(\theta))_{n \geq 1}$ is uniformly integrable by Proposition \ref{prop:norm2}. However, this is in contradiction with Proposition \ref{prop:norm1} and thus \eqref{eq:ineq1} must hold. Furthermore, we also have the following.

\begin{lemma}\label{lemma7}
	For any $\theta\in \R^{d}$ and $\delta>0$, we have
	\begin{equation}\label{eq:ineq2}
	\mathbb{P}\left(\lim_{n\to \infty}E^Q_0\Big(\e^{\langle \theta, X_{L_{n}}\rangle-(\overline{\Lambda}_{q}(\theta)+\delta) L_{n}}\prod_{j=1}^{L_{n}}\xi(X_{j-1},\Delta_j(X))\Big)=0\right)=1.
	\end{equation}
\end{lemma}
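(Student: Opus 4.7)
The plan is to decompose the expectation defining the quantity inside the probability in \eqref{eq:ineq2}, which I denote by $F_n(\theta,\omega)$, according to the value of the hitting time $L_n$, bound the resulting indicator trivially by $1$, and then invoke the definition of $\overline{\Lambda}_q(\theta)$ given by \eqref{eq:mgf2} term by term. Concretely, fix $\theta\in\R^d$ and $\delta>0$. Since the $Q$-random walk is nearest-neighbor on $\Z^d$ and $\ell\in\bbV$ is a lattice unit vector, every step changes $\langle X_j,\ell\rangle$ by at most $1$, so $L_n\geq n$. Using nonnegativity of $\xi$ and $\mathbf{1}_{\{L_n=k\}}\leq 1$, I would write
\begin{equation*}
F_n(\theta,\omega)
=\sum_{k=n}^{\infty}\e^{-(\overline{\Lambda}_q(\theta)+\delta)k}\,E^Q_0\Big(\e^{\langle\theta,X_k\rangle}\prod_{j=1}^{k}\xi(X_{j-1},\Delta_j(X))\,;\,L_n=k\Big)
\leq \sum_{k=n}^{\infty}\e^{-(\overline{\Lambda}_q(\theta)+\delta)k}\,E^Q_0\Big(\e^{\langle\theta,X_k\rangle}\prod_{j=1}^{k}\xi(X_{j-1},\Delta_j(X))\Big).
\end{equation*}

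Next, I would apply Corollary~\ref{cor:exist}, which ensures that for $\mathbb{P}$-a.e.\ $\omega$ (the exceptional null set depending on the fixed $\theta$) one has
\[
\overline{\Lambda}_q(\theta)=\lim_{k\to\infty}\tfrac{1}{k}\log E^Q_0\Big(\e^{\langle\theta,X_k\rangle}\prod_{j=1}^{k}\xi(X_{j-1},\Delta_j(X))\Big).
\]
On this full-measure set there is a (random) $k_0=k_0(\omega,\theta,\delta)$ such that for all $k\geq k_0$,
\[
E^Q_0\Big(\e^{\langle\theta,X_k\rangle}\prod_{j=1}^{k}\xi(X_{j-1},\Delta_j(X))\Big)\leq \e^{(\overline{\Lambda}_q(\theta)+\delta/2)k}.
\]
Plugging this into the previous display, for every $n\geq k_0$,
\begin{equation*}
F_n(\theta,\omega)\leq \sum_{k=n}^{\infty}\e^{-\delta k/2}=\frac{\e^{-\delta n/2}}{1-\e^{-\delta/2}}\xrightarrow[n\to\infty]{}0,
\end{equation*}
which yields \eqref{eq:ineq2}.

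The main \emph{obstacle} here is essentially nil, and this is precisely why the statement is formulated for the quenched log-MGF $\overline{\Lambda}_q$ rather than the averaged one $\overline{\Lambda}_a$: the inequality is driven by the almost sure asymptotics of the MGF and requires no control over the fluctuations of the random perturbation $\prod_j\xi(X_{j-1},\Delta_j(X))$. The only minor point to be careful about is that one cannot replace $L_n$ by $n$ directly inside the exponent, which is why I decompose according to the actual value of $L_n$ and exploit the crucial inequality $L_n\geq n$ (which hinges on $\ell\in\bbV$). The extra factor $\e^{-\delta L_n}$ then beats the polynomial penalty coming from the union bound over $k\geq n$, giving the desired decay.
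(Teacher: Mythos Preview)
Your proof is correct and follows essentially the same approach as the paper: decompose according to the value of $L_n$ (using $L_n\geq n$), drop the indicator, and then invoke the almost sure asymptotics from Corollary~\ref{cor:exist} to bound the tail sum by a geometric series $\sum_{k\geq n}\e^{-\delta k/2}$. The paper's argument is identical in structure and detail.
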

\begin{proof} Let us write $\lambda_{\theta,\delta}:=\overline{\Lambda}_{q}(\theta)+\delta$ in the sequel for simplicity. Then, by splitting the expectation on the left-hand side of \eqref{eq:ineq2} according to the different possible values for $L_n$, we can bound it from above by 
	\begin{equation}\label{eq:bound1}
	\sum_{k=n}^{\infty}E^Q_0\Big(\e^{\langle \theta, X_{k}\rangle-\lambda_{\theta,\delta} k}\prod_{j=1}^{k}\xi(X_{j-1},\Delta_j(X))\Big)=\sum_{k=n}^{\infty}\e^{-\lambda_{\theta,\delta}k}E^Q_0\Big(\e^{\langle \theta, X_{k}\rangle}\prod_{j=1}^{k}\xi(X_{j-1},\Delta_j(X))\Big).
	\end{equation} Now, since for $\mathbb{P}$-almost every $\omega$ we have
	\[
	E^Q_0\Big(\e^{\langle \theta, X_{k}\rangle}\prod_{j=1}^{k}\xi(X_{j-1},\Delta_j(X))\Big) = \e^{(\overline{\Lambda}_{q}(\theta)+o_\omega(1))k}
	\] for some $o_\omega(1) \to 0$ as $k \to \infty$, from \eqref{eq:bound1} we obtain that for all $n$ sufficiently large and $\mathbb{P}$-a.e. $\omega$,
	\[
	E^Q_0\Big(\e^{\langle \theta, X_{L_{n}}\rangle-(\overline{\Lambda}_{q}(\theta)+\delta) L_{n}}\prod_{j=1}^{L_{n}}\xi(X_{i-1},\Delta_j(X))\Big) \leq \sum_{k=n}^\infty \e^{-\frac{\delta}{2}k} \leq \frac{\e^{-\frac{\delta}{2}n}}{1-\e^{-\delta/2}}.
	\] Taking $n\to\infty$ on this inequality now allows us to conclude.
\end{proof}

%
Combined with \eqref{eq:ineq1}, Lemma \ref{lemma7} yields the equality  $\overline{\Lambda}_{a}(\theta)=\overline{\Lambda}_{q}(\theta)$ whenever $|\theta|\vee \mathrm{dis}(\mathbb{P}) < \gamma_1 \wedge \gamma_2$. We state and prove this in a separate proposition for future reference.

\begin{prop}\label{prop1} Define $\overline{\gamma}=\gamma_1 \wedge \gamma_2$, for $\gamma_1$ and $\gamma_2$ as in Propositions \ref{prop:norm1} and \ref{prop:norm2}, respectively. Then, for any $\P \in \mathcal{P}_\kappa$, whenever $|\theta|\vee \mathrm{dis}(\mathbb{P}) < \overline{\gamma}$ we have $\overline{\Lambda}_{q}(\theta)=\overline{\Lambda}_{a}(\theta)$.
\end{prop}
\begin{proof} Observe that \eqref{eq:ineq1} implies that, for $|\theta|\vee\mathrm{dis}(\mathbb{P}) < \overline{\gamma}$, 
	\[
	\mathbb{P}\left(\limsup_{n\to \infty}E^Q_0\Big(\e^{\langle \theta, X_{L_{n}}\rangle-(\overline{\Lambda}_{a}(\theta)) L_{n}}\prod_{j=1}^{L_{n}}\xi(X_{j-1},\Delta_j(X))\Big)>0\right)>0.
	\] In conjunction with \eqref{eq:ineq2}, this yields the existence of an environment $\omega$ and $n \geq 1$ such that
	\[
	E^Q_0\Big(\e^{\langle \theta, X_{L_{n}}\rangle-(\overline{\Lambda}_{q}(\theta)+\delta) L_{n}}\prod_{j=1}^{L_{n}}\xi(X_{i-1},\Delta_j(X))\Big) < E^Q_0\Big(\e^{\langle \theta, X_{L_{n}}\rangle-(\overline{\Lambda}_{a}(\theta)) L_{n}}\prod_{j=1}^{L_{n}}\xi(X_{j-1},\Delta_j(X))\Big),
	\] from where it follows that $\overline{\Lambda}_{q}(\theta)+\delta > \overline{\Lambda}_{a}(\theta)$. Letting $\delta \to 0$ yields the inequality $\overline{\Lambda}_{q}(\theta)\geq \overline{\Lambda}_{a}(\theta)$. But, since $\overline{\Lambda}_{q}(\theta)\leq \overline{\Lambda}_{a}(\theta)$ for all $\theta \in \R^{d}$ by Jensen's inequality, we deduce that $\overline{\Lambda}_{q}(\theta)=\overline{\Lambda}_{a}(\theta)$ whenever $|\theta|\vee\mathrm{dis}(\mathbb{P}) < \overline{\gamma}$, which concludes the proof.
\end{proof}

Thus, to complete the argument it only remains to prove Propositions \ref{prop:norm1} and \ref{prop:norm2}. We will do this later in Section \ref{sec:l12}.

\section{Proof of Theorem \ref{th:main} and Theorem \ref{th:main2}: Deducing $I_q=I_a$ from $\overline{\Lambda}_q=\overline{\Lambda}_a$} \label{sec:Hessian}

We now show how to conclude Theorem \ref{th:main2} (and therefore, Theorem \ref{th:main}) from the results in the previous section by proving that the equality of $\overline{\Lambda}_q$ and $\overline{\Lambda}_a$ in a neighborhood of the origin implies, for sufficiently small disorder, the equality of the rate functions $I_q$ and $I_a$ in a neighborhood of $y$. The task will be carried out in three steps, spanning Section \ref{sec:uniform}-Section \ref{sec:proof:prop2}.

\subsection{Uniform closeness of $y$ and $\nabla\overline\Lambda_a(0)$.}\label{sec:uniform}
As already remarked earlier, we would like to argue that, given $y\ne 0$, for all environmental laws with a small enough disorder, $y$ is close to the gradient $\nabla\overline\Lambda_a(0)$. 
 Recall that by Proposition \ref{prop:norm1} we have that, for any $\P \in \mathcal{P}_\kappa$, if $|\theta| \vee \mathrm{dis}(\mathbb{P}) < \gamma_1$ then 
\[
\overline{E}^Q_0\Big(\e^{\langle \theta,X_{\tau_{1}}\rangle -\overline{\Lambda}_{a}(\theta) \tau_{1}}\E\prod_{j=1}^{\tau_{1}}\xi(X_{j-1},\Delta_j(X))\Big)=1.
\] In particular, taking gradient on both sides (which we can do by dominated convergence, using Proposition \ref{prop:reglambdaa} and the control in \eqref{eq:boundsup}), we obtain that whenever $|\theta| \vee \mathrm{dis}(\mathbb{P}) < \gamma_1$,
\begin{equation}\label{eq:gradient_identity}
\overline{E}^Q_0\Big((X_{\tau_{1}}-\nabla \overline{\Lambda}_{a}(\theta)\tau_{1})\e^{\langle \theta,X_{\tau_{1}}\rangle -\overline{\Lambda}_{a}(\theta) \tau_{1}}\E\prod_{j=1}^{\tau_{1}}\xi(X_{j-1},\Delta_j(X))\Big)=0,
\end{equation}
which yields the representation
\begin{equation}\label{eq:rep}
\nabla \overline{\Lambda}_{a}(\theta)=\frac{\overline{E}^Q_0\Big(X_{\tau_{1}}\e^{\langle \theta,X_{\tau_{1}}\rangle -\overline{\Lambda}_{a}(\theta) \tau_{1}}\E\prod_{j=1}^{\tau_{1}}\xi(X_{j-1},\Delta_j(X))\Big)}{\overline{E}^Q_0\Big(\tau_{1}\e^{\langle \theta,X_{\tau_{1}}\rangle -\overline{\Lambda}_{a}(\theta) \tau_{1}}\E\prod_{j=1}^{\tau_{1}}\xi(X_{j-1},\Delta_j(X))\Big)}.
\end{equation}
In particular, notice that whenever $\mathrm{dis}(\P)=0$, i.e.  $\P$-a.s. $\omega(x,e)=\alpha(e)$ for all $e \in \bbV$ and $x \in \Z^d$, we have $\overline{\Lambda}_{a}(0)=0$ so that
\begin{equation}\label{eq:dis0}
\nabla\overline{\Lambda}_{a}(0)=\frac{\overline{E}^Q_0(X_{\tau_{1}})}{\overline{E}^Q_0(\tau_1)}.
\end{equation} On the other hand, by the renewal structure, the law of large numbers for the $Q$-random walk and (P2) in Lemma \ref{lemma:prop} we have that, for any environmental law $\P$ (with not necessarily zero disorder),
\begin{equation}\label{eq:dis1}
\frac{\overline{E}^Q_0(X_{\tau_{1}})}{\overline{E}^Q_0(\tau_1)}=y.
\end{equation} In particular, in the zero disorder case we conclude that $\nabla \overline{\Lambda}_a(0)=y$. In the general case, whenever $\mathrm{dis}(\P)$ is sufficiently small $\nabla \overline{\Lambda}_a(0)$ will be close to $y$. More precisely, we have the following.

\begin{prop}\label{lemma9} Given $\delta > 0$ there exists $\eps_1=\eps_1(y,\delta) >0$ such that, for any $\P \in \mathcal{P}_\kappa$, if $\mathrm{dis}(\mathbb{P})<\eps_1$ then $|\nabla \overline{\Lambda}_{a}(0)-y|<\delta$.
\end{prop}
\begin{proof} It follows from \eqref{eq:rep} that
	\[
	\nabla \overline{\Lambda}_{a}(0)=\frac{\overline{E}^Q_0\Big(X_{\tau_{1}}\e^{-\overline{\Lambda}_{a}(0) \tau_{1}}\E\prod_{j=1}^{\tau_{1}}\xi(X_{j-1},\Delta_j(X))\Big)}{\overline{E}^Q_0\Big(\tau_{1}\e^{-\overline{\Lambda}_{a}(0) \tau_{1}}\E\prod_{j=1}^{\tau_{1}}\xi(X_{j-1},\Delta_j(X))\Big)}.
	\] Thus, in light of \eqref{eq:dis1} and since $\overline{E}^Q_0(\tau_1) \geq 1$, in order to prove the result it will suffice to show that given $\delta'>0$ there exists $\varepsilon'_1=\varepsilon'_1(y,\delta')>0$ such that, for any $\mathbb{P} \in \mathcal{P}_\kappa$, if $\mathrm{dis}(\mathbb{P})< \varepsilon'_1$ then
	\begin{equation}\label{eq:bexp1}
	\Big|\overline{E}^Q_0\Big(X_{\tau_{1}}\e^{-\overline{\Lambda}_{a}(0) \tau_{1}}\E\prod_{j=1}^{\tau_{1}}\xi(X_{j-1},\Delta_j(X))\Big) - \overline{E}^Q_0(X_{\tau_1})\Big|\leq\delta'
	\end{equation} and
	\begin{equation}\label{eq:bexp2}
	\Big|\overline{E}^Q_0\Big(\tau_{1}\e^{-\overline{\Lambda}_{a}(0) \tau_{1}}\E\prod_{j=1}^{\tau_{1}}\xi(X_{j-1},\Delta_j(X))\Big) - \overline{E}^Q_0(\tau_1)\Big|\leq \delta'.
	\end{equation} But by Remark \ref{rem:bound} and the the mean value theorem we have that
	\[
	\Big|\overline{E}^Q_0\Big(X_{\tau_{1}}\e^{-\overline{\Lambda}_{a}(0) \tau_{1}}\E\prod_{j=1}^{\tau_{1}}\xi(X_{j-1},\Delta_j(X))\Big) - \overline{E}^Q_0(X_{\tau_1})\Big|\leq  h(\mathrm{dis}(\P)) \overline{E}^Q_0\Big(|X_{\tau_1}|\tau_1 \e^{h(\mathrm{dis}(\P))\tau_1}\Big),
	\] so that \eqref{eq:bexp1} now follows from the bound $|X_{\tau_1}| \leq \tau_1$, Lemma \ref{lemma:pos} and Proposition \ref{prop:exp_moments} upon taking $\mathrm{dis}(\P)$ small enough (depending only on $y$ and $\delta'$). Since \eqref{eq:bexp2} also follows in a similar way, this concludes the proof.
\end{proof}

Next, we consider the set
\[
\mathcal{A}_{y,\P}:=\{\nabla\overline{\Lambda}_{a}(\theta):\abs{\theta}<\overline{\gamma}\},
\] with $\overline{\gamma}$ as in Proposition \ref{prop1}. Observe that this set depends on both $y$ and $\P$ (and we stress this dependence in the notation). The next proposition shows that this set is open when $\mathrm{dis}(\mathbb{P}) < \gamma_1$.

\begin{prop}\label{prop:hessian} For any $\P \in \mathcal{P}_\kappa$, whenever $|\theta| \vee \mathrm{dis}(\mathbb{P}) < \gamma_1$, with $\gamma_1>0$ given by Proposition \ref{prop:norm1}, the Hessian $H_a(\theta)$ of $\overline{\Lambda}_a$ at the point $\theta$ is given by the formula
	\begin{equation}\label{eq:hessian_formula}
	H_a(\theta) =\frac{\overline{E}^Q_0\Big((X_{\tau_{1}}-\nabla\overline{\Lambda}_{a}(\theta)\tau_{1})^T(X_{\tau_{1}}-\nabla\overline{\Lambda}_{a}(\theta)\tau_{1})\e^{\langle \theta,X_{\tau_{1}}\rangle -\overline{\Lambda}_{a}(\theta) \tau_{1}}\E\prod_{j=1}^{\tau_{1}}\xi(X_{j-1},\Delta_j(X))\Big)}{\overline{E}^Q_0\Big(\tau_{1}\e^{\langle \theta,X_{\tau_{1}}\rangle -\overline{\Lambda}_{a}(\theta) \tau_{1}}\E\prod_{j=1}^{\tau_{1}}\xi(X_{j-1},\Delta_j(X))\Big)}
	\end{equation} and is positive definite. In particular, whenever $\mathrm{dis}(\P) < \gamma_1$ the set $\mathcal{A}_{y,\P}$ is open.
\end{prop}

\begin{proof} Taking derivatives on \eqref{eq:gradient_identity} (which again we can do by using Proposition \ref{prop:reglambdaa} and \eqref{eq:boundsup}) and proceeding as for \eqref{eq:rep} immediately yields \eqref{eq:hessian_formula}. On the other hand, for any column vector $v \in \R^{n\times 1}$ we have
	\[
	\langle v, H_a(\theta)\cdot v\rangle = \frac{\overline{E}^Q_0\Big(|\langle X_{\tau_{1}}-\nabla\overline{\Lambda}_{a}(\theta)\tau_{1}, v\rangle|^2\e^{\langle \theta,X_{\tau_{1}}\rangle -\overline{\Lambda}_{a}(\theta) \tau_{1}}\E\prod_{j=1}^{\tau_{1}}\xi(X_{j-1},\Delta_j(X))\Big)}{\overline{E}^Q_0\Big(\tau_{1}\e^{\langle \theta,X_{\tau_{1}}\rangle -\overline{\Lambda}_{a}(\theta) \tau_{1}}\E\prod_{j=1}^{\tau_{1}}\xi(X_{j-1},\Delta_j(X))\Big)},
	\] so that $\langle v, H_a(\theta)\cdot v\rangle \geq 0$ and the equality holds if and only if $\langle X_{\tau_{1}}-\nabla\overline{\Lambda}_{a}(\theta)\tau_{1}, v\rangle=0$ $\overline{Q}_0$-a.s. or, equivalently, if $\langle \tfrac{X_{\tau_1}}{\tau_1},v\rangle$ is $\overline{Q}_0$-almost surely constant. However, since $\inf_{e \in \bbV}\alpha(e)>0$, it is not hard to check that if $v \neq 0$ then $\langle \tfrac{X_{\tau_1}}{\tau_1},v\rangle$ cannot be constant. Hence, we see that in this case $v$ must be zero and therefore $H_a(\theta)$ is positive definite. Finally, that $\mathcal{A}_{y,\P}$ is open follows from this and the inverse function theorem.
\end{proof}

The next proposition states that, whenever the disorder is small enough, the set $\mathcal{A}_{y,\P}$ contains a ball centered at $\nabla \overline{\Lambda}_a(0)$ whose radius is independent of $\P$. 

\begin{prop}\label{prop2} There exist $\varepsilon_2=\varepsilon_2(y,d,\kappa),r_2=r_2(y,d,\kappa) > 0$ such that, for any $\P \in \mathcal{P}_\kappa$, if $\mathrm{dis}(\P)<\varepsilon_2$ then $B_{r_2}(\nabla\overline{\Lambda}_{a}(0))\subset \mathcal{A}_{y,\P}$.
\end{prop}

The proof of Proposition \ref{prop2} will be carried out in Subsection \ref{sec:proof:prop2} . As a consequence of Propositions \ref{lemma9} and \ref{prop2}, we immediately obtain the following corollary.

\begin{cor}\label{cor:ball}There exist $\varepsilon=\varepsilon(y,d,\kappa),r=r(y,d,\kappa) > 0$ such that, for any $\P \in \mathcal{P}_\kappa$, if $\mathrm{dis}(\P)<\varepsilon$ then $B_{r}(y)\subset \mathcal{A}_{y,\P}$.
\end{cor}

\subsection{Proof of Theorem \ref{th:main} and Theorem \ref{th:main2} (Assuming Proposition \ref{prop2}):}\label{sec:dev} Now, for $x\in B_{r}(y)$ with $r$ as in Corollary \ref{cor:ball}, define the quantities
\[
\tilde{I}_{q}(x):=\sup_{\theta \in \R^d}[\langle \theta,x\rangle-\overline{\Lambda}_{q}(\theta)]
\hspace{1cm}\text{ and }\hspace{1cm}
\tilde{I}_{a}(x):=\sup_{\theta \in \R^d}[\langle \theta,x\rangle-\overline{\Lambda}_{a}(\theta)].
\] It is standard to show that (see \cite[Lemma 2.3.9]{DZ98} for details) 
\begin{equation}\label{eq:ratef}
\tilde{I}_{q}(x)=\langle \theta_{x,q},y\rangle-\overline{\Lambda}_{q}(\theta_{x,q}) \hspace{1cm}\text{ and }\hspace{1cm}\tilde{I}_{a}(x)=\langle\theta_{x,a},y\rangle-\overline{\Lambda}_{a}(\theta_{x,a})
\end{equation}
for any $\theta_{x,q}$ and $\theta_{x,a}$ respectively satisfying 
\[
\nabla\overline{\Lambda}_{q}(\theta_{x,q})=x \hspace{1cm}\text{ and }\hspace{1cm}\nabla\overline{\Lambda}_{a}(\theta_{x,a})=x.
\] Notice that such $\theta_{x,a}$ exists and satisfies $|\theta_{x,a}|< \overline{\gamma}$ since $x \in \mathcal{A}_{y,\P}$ by choice of $x$. Furthermore, such $\theta_{x,q}$ also exists and in fact can be taken equal to $\theta_{x,a}$, since both $\overline{\Lambda}_q(\theta)$ and $\overline{\Lambda}_a(\theta)$ coincide for $|\theta| < \overline{\gamma}$ by Proposition \ref{prop1}. Hence, from \eqref{eq:ratef} and the fact that $\theta_{x,q}=\theta_{x,a}$, we obtain that $\tilde{I}_{q}(x)=\tilde{I}_{a}(x)$ for all $x \in B_{r}(y)$. We may then conclude Theorem \ref{th:main2} once we show this implies that $I_q(x)=I_a(x)$. But, from \eqref{eq:eqmgf} and the definition of $\tilde{I}_q$ and $\tilde{I}_a$, for $x \in B_{r}(y)$ we have that
\begin{equation}\label{eq:eqratefq}
\tilde{I}_{q}(x)+\log(\sqrt{C_{y,\alpha}}) + \langle \theta_{y,\alpha} ,x\rangle = \sup_{\theta \in \R^d}[\langle \theta+\theta_{y,\alpha},x\rangle - \Lambda_q(\theta+\theta_{y,\alpha})]=\sup_{\theta \in \R^d}[\langle \theta,x\rangle - \Lambda_q(\theta)] = I_q(x)
\end{equation} and
\begin{equation}\label{eq:eqratefa}
\tilde{I}_{a}(x)+\log(\sqrt{C_{y,\alpha}})+ \langle \theta_{y,\alpha} ,x\rangle  = \sup_{\theta \in \R^d}[\langle \theta+\theta_{y,\alpha},x\rangle - \Lambda_a(\theta+\theta_{y,\alpha})]=\sup_{\theta \in \R^d}[\langle \theta,x\rangle - \Lambda_a(\theta)] = I_a(x),
\end{equation}
where the rightmost equalities in \eqref{eq:eqratefq} and \eqref{eq:eqratefa} follow from standard arguments (see \cite[Section 2.3]{DZ98} for details) using that $\Lambda_q$ and $\Lambda_a$ are well-defined in the sense of Corollary \ref{cor:exist} and that $B_{r}(y)$ is contained in the set of exposed points of the Fenchel-Legendre transforms of both $\Lambda_q$ and $\Lambda_a$ by \eqref{eq:ratef} and \eqref{eq:eqmgf}. Therefore, as $\tilde{I}_{q}$ and $\tilde{I}_{a}$ agree on $B_{r}(y)$, we see that the same holds for $I_{q}, I_{a}$ and thus we obtain Theorem \ref{th:main2}. 

Then, in order to complete the proof, it only remains to prove Proposition \ref{prop2}. We do this next.

\subsection{Proof of Proposition \ref{prop2}}\label{sec:proof:prop2} The key ingredient in the proof of Proposition \ref{prop2} is the following uniform version of the inverse function theorem.

\begin{theorem}[Uniform inverse function theorem]\label{prop3}
	Let $\mathcal{F}$ be a family of $C^1$-functions $f:G \to \R^d$ defined on some neighborhood $G \subseteq \R^d$ of $0$ such that the differential matrix $Df(0) \in \R^{d\times d}$ is invertible for every $f \in \mathcal{F}$. Then, if there exist constants $c,\delta >0$ such that $\{ \theta : |\theta|<\delta\} \subseteq G$ and \begin{enumerate}
		\item [I1.] $\sup_{f \in \mathcal{F}} \norm{Df(0)^{-1}}<c$,
		\item [I2.] $\sup_{f \in \mathcal{F}\,,\,|\theta|<\delta}\norm{Df(\theta)-Df(0)}<\frac{1}{2c}$,
	\end{enumerate} 
	where $\norm{\cdot}$ denotes the operator $1$-norm, there exists $\rho$ (depending only on $c$ and $\delta$) such that for all $f \in \mathcal{F}$,
	\[
	B_\rho(f(0)) \subseteq \{ f(\theta) : |\theta| < \delta\}.
	\] 
\end{theorem}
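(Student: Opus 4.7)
The plan is to establish Theorem \ref{prop3} by running the standard Newton/Banach fixed-point argument for the inverse function theorem, but keeping careful track of how the various radii and constants depend on $f \in \mathcal{F}$, and showing that assumptions (I1)--(I2) ensure all quantitative ingredients can be chosen uniformly in $f$.

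\textbf{Step 1: Set up a uniform Newton iteration.} For each $f \in \mathcal{F}$ and each target point $y \in \R^d$, introduce the auxiliary map
\[
\Phi_{f,y}(\theta) := \theta - Df(0)^{-1}\bigl(f(\theta) - y\bigr),
\]
defined on $\{\theta : |\theta| < \delta\}$. Observe that $\theta$ is a fixed point of $\Phi_{f,y}$ if and only if $f(\theta) = y$, so solving $f(\theta)=y$ reduces to finding fixed points of $\Phi_{f,y}$.

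\textbf{Step 2: Uniform contraction.} Differentiating, we obtain
\[
D\Phi_{f,y}(\theta) = I - Df(0)^{-1}Df(\theta) = Df(0)^{-1}\bigl(Df(0) - Df(\theta)\bigr),
\]
so by (I1) and (I2),
\[
\sup_{f \in \mathcal{F}}\,\sup_{|\theta|<\delta}\,\bigl\lVert D\Phi_{f,y}(\theta)\bigr\rVert \;\le\; c \cdot \tfrac{1}{2c} \;=\; \tfrac{1}{2}.
\]
By the mean value inequality this shows $\Phi_{f,y}$ is $\tfrac{1}{2}$-Lipschitz on $\{|\theta|<\delta\}$, uniformly in $f \in \mathcal{F}$ and $y \in \R^d$.

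\textbf{Step 3: Self-map property on a uniform ball.} For $|\theta|\le \delta/2$, combining the contraction bound with the estimate $|\Phi_{f,y}(0)| = |Df(0)^{-1}(f(0)-y)| \le c\,|f(0)-y|$ yields
\[
|\Phi_{f,y}(\theta)| \;\le\; |\Phi_{f,y}(\theta)-\Phi_{f,y}(0)| + |\Phi_{f,y}(0)| \;\le\; \tfrac{1}{2}|\theta| + c\,|f(0)-y| \;\le\; \tfrac{\delta}{4} + c\,|f(0)-y|.
\]
Set $\rho := \delta/(4c)$, which depends only on $c$ and $\delta$. Then for any $y \in B_\rho(f(0))$, the right-hand side is at most $\delta/2$, so $\Phi_{f,y}$ maps the closed ball $\overline{B_{\delta/2}(0)}$ into itself.

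\textbf{Step 4: Conclusion via Banach's fixed-point theorem.} For each $f \in \mathcal{F}$ and each $y \in B_\rho(f(0))$, $\Phi_{f,y}$ is a $\tfrac{1}{2}$-contraction of the complete metric space $\overline{B_{\delta/2}(0)}$ into itself, so it admits a (unique) fixed point $\theta^* \in \overline{B_{\delta/2}(0)} \subset \{|\theta|<\delta\}$. This fixed point satisfies $f(\theta^*)=y$, and hence $y \in \{f(\theta) : |\theta|<\delta\}$, giving $B_\rho(f(0)) \subseteq \{f(\theta) : |\theta|<\delta\}$ for every $f \in \mathcal{F}$, with $\rho$ depending only on $c$ and $\delta$. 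No step of the argument is genuinely difficult; the only subtlety is ensuring that each quantitative choice (the contraction constant $\tfrac{1}{2}$, the invariant ball radius $\delta/2$, and the image radius $\rho$) is made purely in terms of $c$ and $\delta$, which is precisely what assumptions (I1)--(I2) were designed to guarantee.
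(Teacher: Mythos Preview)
Your proof is correct and follows exactly the approach the paper indicates: the paper omits the proof, stating it is obtained ``by simply mimicking (part of) the proof of the standard inverse function theorem (see e.g.\ \cite[Theorem 9.24]{R76}), replacing the usual estimates with uniform bounds,'' and Rudin's argument is precisely the Newton/contraction-mapping scheme you have carried out, with your choice $\rho = \delta/(4c)$ making the uniformity in $f$ explicit.
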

The proof of Theorem \ref{prop3} is obtained by simply mimicking (part of) the proof of the standard inverse function theorem (see e.g. \cite[Theorem 9.24]{R76}), replacing the usual estimates with uniform bounds. Therefore, we omit the proof and leave the details to the reader. 

In light of Theorem \ref{prop3}, to obtain Proposition \ref{prop2} it will suffice to show that there exists $\varepsilon_2>0$ depending only on $y, d$ and $\kappa$ such that the family of $C^1$-functions 
\[
\mathcal{F}_y:=\{ \nabla \overline{\Lambda}_a : \P \in \mathcal{P}_\kappa \text{ with }\mathrm{dis}(\P)<\varepsilon_2\}
\] satisfies the hypotheses of Theorem \ref{prop3}. By Proposition \ref{prop:hessian}, we only need to check conditions (I1) and (I2). For this, we will need three auxiliary lemmas. The first one asserts that $\nabla\overline{\Lambda}_{a}(\theta)$ is close to $\nabla\overline{\Lambda}_{a}(0)$ (uniformly over $\P$) whenever $\theta$ is close to $0$ and the disorder is sufficiently small.

\begin{lemma}\label{lemma10}
	Given $c>0$, there exist $\eps_3=\varepsilon_3(y,c),\delta=\delta(y,c)>0$ such that, for any $\P \in \mathcal{P}_\kappa$, if~$\mathrm{dis}(\mathbb{P})<\eps_{3}$ then 
	\[
	\sup_{|\theta|<\delta}|\nabla\overline{\Lambda}_{a}(\theta)-\nabla\overline{\Lambda}_{a}(0)|<c.
	\]
\end{lemma}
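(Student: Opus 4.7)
The strategy is to exploit the integral representation \eqref{eq:rep} of $\nabla \overline{\Lambda}_a(\theta)$, writing $\nabla \overline{\Lambda}_a(\theta) = N_{\bbP}(\theta)/D_{\bbP}(\theta)$ with
\[
N_{\bbP}(\theta) := \overline{E}^Q_0\Big(X_{\tau_{1}}\e^{\langle \theta,X_{\tau_{1}}\rangle -\overline{\Lambda}_{a}(\theta) \tau_{1}}\E\prod_{j=1}^{\tau_{1}}\xi(X_{j-1},\Delta_j(X))\Big),
\]
and $D_{\bbP}(\theta)$ defined analogously with $X_{\tau_1}$ replaced by $\tau_1$. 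The plan is to show that both $N_{\bbP}$ and $D_{\bbP}$ are Lipschitz at $\theta=0$ with constants uniform over $\bbP\in\mathcal P_\kappa$ of small disorder, and that $D_{\bbP}(0)$ is bounded away from zero uniformly.

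For the Lipschitz estimate, I would apply the mean value theorem to the exponential inside $N_{\bbP}$, using $|X_{\tau_1}|\le \tau_1$ and the bounds $|\overline{\Lambda}_a(\theta)-\overline{\Lambda}_a(0)|\le|\theta|$ and $\e^{-\overline{\Lambda}_a(0)\tau_1}\E\prod_j\xi\le \e^{h(\mathrm{dis}(\bbP))\tau_1}$ from Remark \ref{rem:bound}. This reduces matters to controlling
\[
|N_{\bbP}(\theta)-N_{\bbP}(0)|\;\lesssim\;|\theta|\,\overline{E}^Q_0\Big(\tau_1^{2}\,\e^{(2|\theta|+2h(\mathrm{dis}(\bbP)))\tau_1}\Big),
\]
and the analogous bound for $D_{\bbP}$. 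By Lemma \ref{lemma:pos} (uniform positivity of $Q(\beta_0=\infty)$) and Proposition \ref{prop:exp_moments} (uniform exponential moments of $\tau_1$), the right-hand side is bounded by $C(y,d,\kappa)\,|\theta|$ as soon as $2|\theta|+2h(\mathrm{dis}(\bbP))<\gamma_0/2$.

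For the uniform lower bound on $D_{\bbP}(0)$, I would use the complementary a.s.\ bound $\e^{-\overline{\Lambda}_a(0)\tau_1}\E\prod_j\xi\ge \e^{-h(\mathrm{dis}(\bbP))\tau_1}$ (again from Remark \ref{rem:bound}) together with truncation: fix $M=M(y)$ large enough so that Proposition \ref{prop:exp_moments} yields $\overline{E}^Q_0(\tau_1;\tau_1>M)\le \tfrac12\overline{E}^Q_0(\tau_1)$, whence $\overline{E}^Q_0(\tau_1;\tau_1\le M)\ge \tfrac12$, and therefore $D_{\bbP}(0)\ge \tfrac12\e^{-h(\mathrm{dis}(\bbP))M}$. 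Both $M$ and the resulting lower bound depend only on $y$.

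Combining these ingredients via the identity
\[
\nabla\overline{\Lambda}_a(\theta)-\nabla\overline{\Lambda}_a(0)
=\frac{N_{\bbP}(\theta)-N_{\bbP}(0)}{D_{\bbP}(\theta)}
-\nabla\overline{\Lambda}_a(0)\,\frac{D_{\bbP}(\theta)-D_{\bbP}(0)}{D_{\bbP}(\theta)}
\]
and using that $|\nabla\overline{\Lambda}_a(0)|\le 1$ (since $\overline{\Lambda}_a$ is $1$-Lipschitz) gives $|\nabla\overline{\Lambda}_a(\theta)-\nabla\overline{\Lambda}_a(0)|\le C'(y,d,\kappa)\,|\theta|$; choosing $\delta=c/(2C')$ and $\varepsilon_3$ small enough to preserve the exponential-moment constraints concludes the argument. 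The only nontrivial point is the uniformity in $\bbP$ throughout, which is precisely what Lemma \ref{lemma:pos} and Proposition \ref{prop:exp_moments} provide, so there is no serious obstacle beyond careful bookkeeping.
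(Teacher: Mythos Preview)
Your proposal is correct and follows essentially the same route as the paper: both use the quotient representation \eqref{eq:rep}, control $N_{\bbP}(\theta)-N_{\bbP}(0)$ and $D_{\bbP}(\theta)-D_{\bbP}(0)$ via the mean value theorem together with Remark \ref{rem:bound} and the uniform exponential moments of $\tau_1$, and then combine using a lower bound on the denominator. The only cosmetic difference is in that last step: the paper invokes \eqref{eq:bexp2} (already established in the proof of Proposition \ref{lemma9}) to see that $D_{\bbP}(0)$ is close to $\overline{E}^Q_0(\tau_1)\ge 1$, whereas you give a self-contained truncation argument; both are valid, and your identity for $\nabla\overline{\Lambda}_a(\theta)-\nabla\overline{\Lambda}_a(0)$ implicitly also requires $D_{\bbP}(\theta)$ (not just $D_{\bbP}(0)$) bounded below, which follows immediately from your Lipschitz bound once $\delta$ is small.
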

\begin{proof} In view of \eqref{eq:bexp2} and the fact that $\overline{E}^Q_0(\tau_1) \geq 1$, it will be enough to check that, given $c'>0$, there exist $\varepsilon'=\varepsilon'(y,c'),\delta'=\delta'(y,c')>0$ such that, for any $\mathbb{P} \in \mathcal{P}_\kappa$, if $\mathrm{dis}(\mathbb{P})< \varepsilon'$ then 
	\[
	\sup_{|\theta|<\delta'}\Big| \overline{E}^Q_0\Big(X_{\tau_{1}}\E\prod_{j=1}^{\tau_{1}}\xi(X_{j-1},\Delta_j(X))\Big(\e^{\langle \theta,X_{\tau_{1}}\rangle -\overline{\Lambda}_{a}(\theta) \tau_{1}}-\e^{-\overline{\Lambda}_{a}(0) \tau_{1}}\Big)\Big)\Big|<c'
	\] and
	\[
	\sup_{|\theta|<\delta'}\Big| \overline{E}^Q_0\Big(\tau_{1}\E\prod_{j=1}^{\tau_{1}}\xi(X_{j-1},\Delta_j(X))\Big(\e^{\langle \theta,X_{\tau_{1}}\rangle -\overline{\Lambda}_{a}(\theta) \tau_{1}}-\e^{-\overline{\Lambda}_{a}(0) \tau_{1}}\Big)\Big)\Big|<c'.
	\] But this can be done exactly as in the proof of \eqref{eq:bexp1}-\eqref{eq:bexp2}, using now the inequality
	\[
	|\langle \theta,X_{\tau_{1}}\rangle -\overline{\Lambda}_{a}(\theta)\tau_1| + |\overline{\Lambda}_{a}(0)\tau_1| \leq 2\big(|\theta|+h(\mathrm{dis}(\P))\big)\tau_1,
	\] where $h$ is as in Remark \ref{rem:bound}, which follows in the same way as the inequalities in this last remark. We omit the details.
\end{proof}

The second lemma is the analogue of Proposition \ref{lemma9} but for the Hessian $H_a$, which states that 
whenever $\mathrm{dis}(\P)$ is sufficiently small $H_a(0)$ will be close to the corresponding Hessian for the case of zero disorder. 

\begin{lemma}\label{lemma:h2} Given $c > 0$, there exist $\varepsilon_4=\varepsilon_4(y,c)>0$ such that, for any $\P \in \mathcal{P}_\kappa$, if $\mathrm{dis}(\mathbb{P})<\varepsilon_4$ then
	\[
	\big\lVert H_a(0)-H^*_a(0)\big\rVert < c,
	\] where
	\begin{equation}\label{eq:defhstar}
	H^*_a(0):=\frac{\overline{E}^Q_0((X_{\tau_1}-y\tau_1)^T(X_{\tau_1}-y\tau_1))}{\overline{E}^Q_0(\tau_1)}.
	\end{equation}
\end{lemma}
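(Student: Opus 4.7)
The plan is to apply the same perturbative strategy used in the proof of Proposition \ref{lemma9}, now at the level of the Hessian. Both $H_a(0)$ and $H_a^*(0)$ are quotients $N/D$ where, comparing \eqref{eq:hessian_formula} at $\theta=0$ with \eqref{eq:defhstar}, one passes from $H_a(0)$ to $H_a^*(0)$ by formally substituting $\overline{\Lambda}_a(0)\mapsto 0$, $\nabla\overline{\Lambda}_a(0)\mapsto y$ and $\E\prod_{j=1}^{\tau_1}\xi(X_{j-1},\Delta_j(X))\mapsto 1$. All three substitutions become exact in the zero-disorder limit: the third by definition, the first since $\overline{\Lambda}_a(0) \in [-h(\mathrm{dis}(\bbP)),h(\mathrm{dis}(\bbP))]$ by (a two-sided version of) Remark \ref{rem:bound}, and the second by Proposition \ref{lemma9}. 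So it suffices to quantify each perturbation.

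First I would establish the two-sided control
\[
\Big|\,\e^{-\overline{\Lambda}_a(0)\tau_1}\,\E\prod_{j=1}^{\tau_1}\xi(X_{j-1},\Delta_j(X)) - 1\,\Big| \le 2 h(\mathrm{dis}(\bbP))\,\tau_1\, \e^{h(\mathrm{dis}(\bbP))\tau_1},
\]
which follows from the same reasoning as Remark \ref{rem:bound} combined with the elementary bound $|\e^u-1|\le |u|\e^{|u|}$. Multiplying by $\tau_1$, taking $\overline{E}^Q_0$-expectation, and invoking Proposition \ref{prop:exp_moments} (whose exponential moment bound is uniform in $\bbP\in\mathcal P_\kappa$) together with Lemma \ref{lemma:pos} (for the bounded Radon--Nikodym derivative $\d\overline Q_0/\d Q_0 = \mathbf 1_{\{\beta_0=\infty\}}/Q_0(\beta_0=\infty)$), one obtains, for some $C=C(y)>0$,
\[
\Big|\overline{E}^Q_0\Big(\tau_1\e^{-\overline{\Lambda}_a(0)\tau_1}\E\prod_{j=1}^{\tau_1}\xi(X_{j-1},\Delta_j(X))\Big)-\overline{E}^Q_0(\tau_1)\Big| \le C\, h(\mathrm{dis}(\bbP)),
\]
uniformly in $\bbP\in\mathcal P_\kappa$ with $\mathrm{dis}(\bbP)$ small. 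The same template handles the numerator once one controls the difference of the two quadratic forms: writing $\Delta:=\nabla\overline{\Lambda}_a(0)-y$ and using $|X_{\tau_1}|\le\tau_1$, $|y|\le 1$, one checks that
\[
\big\|(X_{\tau_1}-\nabla\overline{\Lambda}_a(0)\tau_1)^T(X_{\tau_1}-\nabla\overline{\Lambda}_a(0)\tau_1)-(X_{\tau_1}-y\tau_1)^T(X_{\tau_1}-y\tau_1)\big\|\le C|\Delta|\tau_1^2,
\]
and by Proposition \ref{lemma9} we can make $|\Delta|$ arbitrarily small at the cost of shrinking $\mathrm{dis}(\bbP)$. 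Combining this with the previous display (now with an extra $\tau_1$ factor), we get
\[
\big\|N(H_a(0)) - N(H_a^*(0))\big\| \le C\big(h(\mathrm{dis}(\bbP))+|\Delta|\big)\,\overline{E}^Q_0\!\big(\tau_1^3\e^{h(\mathrm{dis}(\bbP))\tau_1}\big),
\]
which is again $o_{\mathrm{dis}(\bbP)\to 0}(1)$ uniformly in $\bbP\in\mathcal P_\kappa$.

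The last step is to assemble these two controls in the quotient. Since $\overline{E}^Q_0(\tau_1)\ge 1$, the denominator of $H_a(0)$ is bounded below by $\tfrac12$ as soon as $\mathrm{dis}(\bbP)$ is small enough, uniformly in $\bbP\in\mathcal P_\kappa$. Writing $H_a(0)-H_a^*(0) = \frac{N_a - N_a^*}{D_a} + N_a^*\big(\tfrac{1}{D_a}-\tfrac{1}{D_a^*}\big)$ and using $\|N_a^*\|\le\overline{E}^Q_0(|X_{\tau_1}-y\tau_1|^2)\le C\overline{E}^Q_0(\tau_1^2)<\infty$ uniformly, both terms are $o_{\mathrm{dis}(\bbP)\to 0}(1)$ uniformly in $\bbP\in\mathcal P_\kappa$, which gives the claim. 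The principal difficulty is ensuring that every estimate is uniform over $\bbP\in\mathcal P_\kappa$; this uniformity hinges entirely on the fact that the constants $\overline c$ in Lemma \ref{lemma:pos} and $\gamma_0$ in Proposition \ref{prop:exp_moments} depend only on $y$ (and not on $\bbP$), which then transfers to arbitrarily high polynomial moments of $\tau_1$ under $\overline Q_0$ through Hölder's inequality against the exponential moment bound.
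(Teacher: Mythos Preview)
Your proof is correct and follows essentially the same approach as the paper's: both reduce the quotient comparison to separate control of numerator and denominator, use the bound $|\Xi_a(0)-1|\le C\,h(\mathrm{dis}(\bbP))\tau_1\e^{h(\mathrm{dis}(\bbP))\tau_1}$ from Remark~\ref{rem:bound}, control the quadratic-form difference $\|\Gamma(\nabla\overline{\Lambda}_a(0))-\Gamma(y)\|$ via $|\nabla\overline{\Lambda}_a(0)-y|$ and Proposition~\ref{lemma9}, and close with the uniform exponential moment bound of Proposition~\ref{prop:exp_moments}. Your explicit quotient decomposition $H_a(0)-H_a^*(0)=\frac{N_a-N_a^*}{D_a}+N_a^*\big(\tfrac{1}{D_a}-\tfrac{1}{D_a^*}\big)$ and your emphasis on where the uniformity in $\bbP$ originates are perhaps slightly more transparent than the paper's presentation, but the substance is identical.
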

\begin{proof} For simplicity, let us set $\Gamma(v):=(X_{\tau_1}-v\tau_1)^T(X_{\tau_1}-v\tau_1)$ for $v \in \R^d$. Then, in view of \eqref{eq:bexp2}, the fact that $\overline{E}^Q_0(\tau_1) \geq 1$ and since
	\[
	\lVert \overline{E}^Q_0(\Gamma(y)) \rVert \leq \overline{E}^Q_0 (|X_{\tau_1}-y\tau_1|^2) \leq (1+|y|)^2\overline{E}^Q_0(\tau_1^2),
	\] by Proposition \ref{prop:exp_moments} (which can be used to bound the second moment of $\tau_1$ uniformly in $\P$) we see that it will suffice to show that the numerators of both matrices are close, i.e. that given any $c'>0$, there exists $\varepsilon'=\varepsilon'(y,c')>0$ such that if $\mathrm{dis}(\P) < \varepsilon'$ then
	\begin{equation}\label{eq:bhess1}
	\bigg\lVert \overline{E}^Q_0\Big(\Gamma(\nabla \overline{\Lambda}_a(0))\e^{-\overline{\Lambda}_{a}(0) \tau_{1}}\E\prod_{j=1}^{\tau_{1}}\xi(X_{j-1},\Delta_j(X))\Big) - \overline{E}^Q_0(\Gamma(y))\bigg\rVert < c'.
	\end{equation} Now, writing $\Xi_a(0):=\e^{-\overline{\Lambda}_{a}(0) \tau_{1}}\E\prod_{j=1}^{\tau_{1}}\xi(X_{j-1},\Delta_j(X))$ for simplicity, observe that we can bound the left-hand side of \eqref{eq:bhess1} from above by
	\[
	\overline{E}^Q_0\Big(\lVert\Gamma(\nabla \overline{\Lambda}_a(0))-\Gamma(y)\rVert\, |\Xi_a(0)|\Big) + \overline{E}^Q_0\Big(\lVert\Gamma(y) \rVert \,|\Xi_a(0)-1|\Big).
	\]
	Since by Remark \ref{rem:bound} we have
	\begin{equation}\label{eq:bhess2}
	|\Xi_a(0) -1 | \leq h(\mathrm{dis}(\P))\tau_1\e^{h(\mathrm{dis}(\P))\tau_1}, 
	\end{equation} and, furthermore, it is straightforward to verify that 
	\begin{equation}\label{eq:bhess3}
	\lVert\Gamma(\nabla \overline{\Lambda}_a(0))-\Gamma(y)\rVert \leq 5(|\nabla \overline{\Lambda}_a(0)-y| \vee 1)\tau_1^2
	\end{equation}
	and 
	\begin{equation}\label{eq:bhess4}
	\lVert \Gamma(y) \rVert \leq |X_{\tau_1}-y\tau_1|^2 \leq (1+|y|)^2\tau_1^2,		
	\end{equation}\eqref{eq:bhess1} follows at once from \eqref{eq:bhess2}-\eqref{eq:bhess3}-\eqref{eq:bhess4} by using Propositions \ref{prop:exp_moments} and \ref{lemma9}. 
\end{proof}

The last auxiliary lemma states that $\lVert (H^*_a(0))^{-1}\rVert$ is uniformly bounded over $\mathcal{P}_\kappa$.

\begin{lemma}\label{lemma:unifstar} The mapping $\alpha \mapsto H_a^*(0)$ is continuous on $\mathcal{M}_1^*(\bbV):=\{ \alpha \in \mathcal{M}_1(\bbV) : \inf_{e \in \bbV} \alpha(e)> 0 \}$. In particular, for any $\kappa > 0$ we have $\sup_{\P \in \mathcal{P}_k} \lVert (H^*_a(0))^{-1}\rVert < \infty$.
\end{lemma}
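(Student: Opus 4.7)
The plan is to establish continuity of $\alpha \mapsto H_a^*(0)$ first and then deduce $\sup_{\bbP\in\mathcal{P}_\kappa}\lVert (H_a^*(0))^{-1}\rVert<\infty$ via a compactness argument on $\mathcal{M}_1^{(\kappa)}(\bbV)$. Observe first that $H_a^*(0)$ depends on $\bbP$ only through $\alpha$, since \eqref{eq:defhstar} involves only the law $Q(y,\alpha)$ of the auxiliary walk and no factor of $\xi$.

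For continuity, I would begin by noting that $\alpha \mapsto u(e)$ is continuous on $\mathcal{M}_1^*(\bbV)$ (via the implicit function theorem applied to \eqref{def:f}, as in the proof of Lemma \ref{lemma:prop}), so that for any cylinder event $A$ depending on finitely many steps, $\alpha \mapsto Q(y,\alpha)(A)$ is a polynomial in the $u(e)$'s. The main technical difficulty is that $\tau_1 = S_{W_1}$ is not cylinder-measurable. To overcome this, fix $\alpha_0 \in \mathcal{M}_1^*(\bbV)$, pick $\kappa>0$ with $\alpha_0 \in \mathcal{M}_1^{(2\kappa)}(\bbV)$, and restrict to the neighborhood $\mathcal{M}_1^{(\kappa)}(\bbV)$: Proposition \ref{prop:exp_moments} supplies uniform exponential moments for $\tau_1$, Lemma \ref{lemma:pos} gives $Q_0(\beta_0=\infty)\ge \overline{c}(y)>0$ uniformly, and the Azuma-Hoeffding bound $Q_0(\beta_0 = k) \le \e^{-\frac{1}{8}|\langle y,\ell\rangle|^2 k}$ from the proof of Lemma \ref{lemma2} (whose rate is independent of $\alpha$) further shows that $\alpha \mapsto Q_0(\beta_0=\infty)$ is itself continuous. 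The key observation is that on $\{\tau_1 = k\}$ one has $\langle X_{\tau_1},\ell\rangle \ge 1$ by the nearest-neighbor structure, so there $\{\beta_0=\infty\}$ is equivalent to the walk staying above $0$ during $[0,k]$; the strong Markov property at $\tau_1$ then yields the factorization
\[
E^Q_0\bigl(F\,\mathbf{1}_{\{\tau_1 = k,\, \beta_0=\infty\}}\bigr) = c_k^{\alpha}(F)\cdot Q_0(\beta_0=\infty),
\]
where $c_k^{\alpha}(F)$ is a cylinder expectation (hence continuous in $\alpha$) for both $F=\tau_1$ and $F=(X_{\tau_1}-y\tau_1)^T(X_{\tau_1}-y\tau_1)$. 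Summing over $k\le N$ and then sending $N\to \infty$, using the uniform tail bound $\overline{E}^Q_0(\tau_1^2 \mathbf{1}_{\{\tau_1>N\}}) \to 0$ (uniform over $\alpha\in\mathcal{M}_1^{(\kappa)}(\bbV)$ by Proposition \ref{prop:exp_moments}), closes the continuity argument for both the numerator and the denominator in \eqref{eq:defhstar}.

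For the uniform invertibility, I would argue that for any $\alpha \in \mathcal{M}_1^{(\kappa)}(\bbV)$ the matrix $H_a^*(0)$ is strictly positive definite: for any column vector $v$,
\[
\langle v, H_a^*(0)\,v\rangle = \frac{\overline{E}^Q_0\bigl(|\langle X_{\tau_1}-y\tau_1,v\rangle|^2\bigr)}{\overline{E}^Q_0(\tau_1)}\ge 0,
\]
with equality forcing $\langle X_{\tau_1}/\tau_1,v\rangle$ to be $\overline{Q}_0$-a.s. constant. Exactly as in the proof of Proposition \ref{prop:hessian}, the uniform lower bound $u(e)\ge c_\kappa>0$ from property (P1) of Lemma \ref{lemma:prop} supplies finitely many $\overline{Q}_0$-positive-probability paths producing values of $X_{\tau_1}/\tau_1$ not contained in any common affine hyperplane, forcing $v=0$. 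Hence $\lambda_{\min}(H_a^*(0))>0$ everywhere on $\mathcal{M}_1^{(\kappa)}(\bbV)$; since $\lambda_{\min}$ is a continuous functional of $H_a^*(0)$, the previous continuity step together with compactness of $\mathcal{M}_1^{(\kappa)}(\bbV)$ ensures that $\lambda_{\min}(H_a^*(0))$ attains a strictly positive minimum $m_\kappa>0$, giving $\lVert (H_a^*(0))^{-1}\rVert\le 1/m_\kappa$ uniformly over $\bbP\in \mathcal{P}_\kappa$. The main obstacle in the whole argument is the continuity step, since one must carefully exploit the renewal structure to reduce computations involving the non-cylinder variable $\tau_1$ to cylinder estimates amenable to dominated convergence; once this is in place, everything else follows from the uniform exponential moments, Lemma \ref{lemma:pos}, and compactness.
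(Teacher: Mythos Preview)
Your proposal is correct and follows essentially the same route as the paper: reduce continuity of the numerator and denominator in \eqref{eq:defhstar} to continuity of truncated quantities $\overline{E}^Q_0(F\,\mathbbm{1}_{\{\tau_1=k\}})$ via the uniform exponential tail from Proposition \ref{prop:exp_moments}, then use the strong Markov factorization at $\tau_1$ (together with the spatial homogeneity of $Q_x(\beta_0=\infty)$) to cancel the $Q_0(\beta_0=\infty)$ factor and recognize what remains as a polynomial in the weights $u(e)$, hence continuous in $\alpha$; finally, invoke positive definiteness of $H_a^*(0)$ and compactness of $\mathcal{M}_1^{(\kappa)}(\bbV)$ for the uniform bound on $\lVert(H_a^*(0))^{-1}\rVert$. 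Your argument is slightly more explicit than the paper's (you spell out the factorization and the continuity of $Q_0(\beta_0=\infty)$, the latter not strictly needed since it cancels), but the structure and key ideas are the same.
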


\begin{proof} By definition of $H^*_a(0)$, it suffices to check that the mappings 
	\[
	\alpha \mapsto \overline{E}^Q_0(\tau_1) \quad \text{ and }\quad \alpha \mapsto \overline{E}^Q_0((X_{\tau_1}-y\tau_1)^T(X_{\tau_1}-y\tau_1))
	\] are continuous on $\mathcal{M}_1^*(\bbV)$. The proof for both mappings is similar, so we only show the continuity of $\alpha \mapsto \overline{E}^Q_0(\tau_1)$. To this end, since $\overline{E}^Q_0(\tau_1\1_{\{\tau_1 > N\}}) \to 0$ as $N \to \infty$ uniformly over $\mathcal{M}_1^*(\bbV)$ by Proposition \ref{prop:exp_moments}, it will be enough to show that $\alpha \mapsto \overline{E}^Q_0(\tau_1{\1}_{\{\tau_1 = N\}})$ is continuous for every $N \geq 1$. But, using the Markov property together with the fact that $Q_x(\beta_0=\infty)$ does not depend on $x$, it is not difficult to see that $\overline{E}^Q_0(\tau_1{\1}_{\{\tau_1 = N\}})$ is a polynomial of degree $N$ in the weights $u=(u(e))_{e \in \bbV}$ from \eqref{eq:w1}. Indeed, we have
	\[
	\overline{E}^Q_0(\tau_1{\1}{1}_{\{\tau_1 = N\}}) = \sum_{\bar{x}_n} \prod_{j=1}^n \alpha(\Delta_j(\bar{x}_n)),
	\] where the sum is over all paths $\bar{x}_n$ of length $n$ which start at $0$ and be extended to an infinite path $\bar{x}_\infty$ such that $\tau_1(\bar{x}_\infty)=n$, where $\tau_1(\bar{x}_\infty)$ denotes the analogue of $\tau_1$ but for $\bar{x}_\infty$. Therefore, since the weights $u(e)$ all depend continuously on $\alpha$, the continuity of $\alpha \mapsto \overline{E}^Q_0(\tau_1\1{1}_{\{\tau_1 = N\}})$ follows.
	
	Finally, to check the last statement, we first notice that $\alpha \mapsto \lVert (H^*_a(0))^{-1}\rVert$ is also continuous on $\mathcal{M}_1^*(\bbV)$ by Proposition \ref{prop:hessian}, since the mappings $A \mapsto A^{-1}$ and $A \mapsto \lVert A \rVert$ are also continuous in their respective domains. Hence, since $\mathcal{M}^{(\kappa)}_1(\bbV)$ is compact for any $\kappa > 0$ and 
	\[
	\sup_{\P \in \mathcal{P}_k} \lVert (H^*_a(0))^{-1}\rVert = \sup_{\alpha \in \mathcal{M}^{(\kappa)}_1(\bbV)} \lVert (H^*_a(0))^{-1}\rVert,
	\] the last statement now follows.
\end{proof}

We are now ready to show (I1) and (I2). To check (I1), using Lemmas \ref{lemma:h2}-\ref{lemma:unifstar} we may choose $\varepsilon_2 > 0$ depending only on $y, d$ and $\kappa$ such that if $\mathrm{dis}(\P)< \varepsilon_2$ then 
\[\lVert H_a(0) - H^*_a(0) \rVert \leq \frac{1}{2 \sup_{\P \in \mathcal{P}_k} \lVert (H^*_a(0))^{-1}\rVert}.
\] Then, using the identity $A^{-1}-B^{-1}=A^{-1}(B-A)B^{-1}$ for any invertible matrices $A,B \in \R^{d\times d}$, we have that, for any $\P \in \mathcal{P}_k$, if $\mathrm{dis}(\P)< \varepsilon_2$ then 
\[
\norm{(H_{a}(0))^{-1}-(H^*_a(0))^{-1}}\leq \norm{(H_{a}(0))^{-1}}\norm{H_{a}(0)-H^*_a(0)}\norm{(H^*_a(0))^{-1}}< \frac{1}{2}\norm{(H_{a}(0))^{-1}},
\] so that by the triangle inequality
\[
\norm{H_{a}(0)^{-1}}\leq \frac{1}{2}\norm{H_{a}(0)^{-1}}+ \norm{(H^*_a(0))^{-1}}
\] and thus 
\[
\norm{H_{a}(0)^{-1}} \leq 2 \norm{(H^*_a(0))^{-1}} \leq 2\sup_{\P \in \mathcal{P}_k} \lVert (H^*_a(0))^{-1}\rVert.
\] This shows (I1) for $c:= 2\sup_{\P \in \mathcal{P}_k} \lVert (H^*_a(0))^{-1}\rVert$. It remains to check (I2).

By arguing as in the proof of Lemma \ref{lemma:h2}, to check (I2) it will suffice to show that, given $c'>0$, one can find $\varepsilon'_2=\varepsilon'_2(y,c'),\delta=\delta(y,c')>0$ such that if $\mathrm{dis}(\P)< \varepsilon'_2$ then 
\[
\sup_{|\theta|< \delta} \Big\lVert \overline{E}^Q_0\big(\Gamma(\nabla \overline{\Lambda}_a(\theta))\Xi_a(\theta)\big) - \overline{E}^Q_0\big(\Gamma(\nabla \overline{\Lambda}_a(0))\Xi_a(0)\big)\Big\rVert < c'
\] where, for $v,\theta \in \R^d$, we set
\[
\Gamma(v):=(X_{\tau_1}-v\tau_1)^T(X_{\tau_1}-v\tau_1) \hspace{1cm}\text{ and }\hspace{1cm}\Xi_a(\theta):=\e^{\langle \theta,X_{\tau_{1}}\rangle -\overline{\Lambda}_{a}(\theta) \tau_{1}}\E\prod_{j=1}^{\tau_{1}}\xi(X_{j-1},\Delta_j(X)).
\] But this can be done as in the proof of Lemma \ref{lemma:h2}, by using Lemma \ref{lemma10} and \eqref{eq:bhess2}-\eqref{eq:bhess3}-\eqref{eq:bhess4} together with the inequalities 
\[
\lVert \Gamma(\nabla \overline{\Lambda}_a(\theta)) - \Gamma(\nabla \overline{\Lambda}_a(0))\rVert \leq 5(|\nabla \overline{\Lambda}_a(\theta)-\nabla \overline{\Lambda}_a(0)| \vee 1)\tau_1^2
\] and 
\[
|\Xi_a(\theta)-\Xi_a(0)|\leq 2(|\theta|+h(\mathrm{dis}(\P)))\tau_1\e^{2(|\theta|+h(\mathrm{dis}(\P)))\tau_1}
\] for $h$ as in Remark \ref{rem:bound}, which are both straightforward to check. This shows (I2) and therefore completes the proof of Proposition \ref{prop2}.

\section{Non-triviality of $\lim_{n \to \infty} \Phi_n(\theta)$ - proof of Propositions \ref{prop:norm1} and \ref{prop:norm2}}\label{sec:l12}

\subsection{Proof of Proposition \ref{prop:norm1}} The first step in the proof will be to show that there exists $\gamma_1=\gamma_1(y)>0$ such that, for any $\P \in \mathcal{P}_\kappa$, whenever $|\theta| \vee \mathrm{dis}(\mathbb{P}) < \gamma_{1}$ we have that \eqref{eq:m1} holds. This will be a consequence of the following two lemmas.

\begin{lemma}\label{lemma1}
	For all $\theta\in \R^{d}$,
	\[
	\overline{E}^Q_0\Big(\e^{\langle \theta,X_{\tau_{1}}\rangle -\overline{\Lambda}_{a}(\theta) \tau_{1}}\E\prod_{j=1}^{\tau_{1}}\xi(X_{j-1},\Delta_j(X))\Big)\leq 1.
	\]
\end{lemma}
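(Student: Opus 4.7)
The plan is to exploit the renewal structure of the $Q$-random walk, combined with the fact that successive regeneration intervals visit pairwise disjoint sites of $\Z^d$. For $\delta>0$, I introduce the $\delta$-discounted quantities
\[
\hat R(\theta,\delta):=\overline{E}^Q_0\Big(\e^{\langle\theta,X_{\tau_1}\rangle-(\overline{\Lambda}_a(\theta)+\delta)\tau_1}\E\prod_{j=1}^{\tau_1}\xi(X_{j-1},\Delta_j(X))\Big),
\]
\[
\hat F_k(\theta,\delta):=E^Q_0\Big(\e^{\langle\theta,X_{\tau_k}\rangle-(\overline{\Lambda}_a(\theta)+\delta)\tau_k}\E\prod_{j=1}^{\tau_k}\xi(X_{j-1},\Delta_j(X))\Big),\qquad k\geq 1.
\]
Since $\hat R(\theta,\delta)$ increases, as $\delta\downarrow 0$, to the quantity appearing in the statement of the lemma (by monotone convergence), it is enough to show $\hat R(\theta,\delta)\leq 1$ for every $\delta>0$.

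The first step is to establish the multiplicative identity $\hat F_k(\theta,\delta)=\hat F_1(\theta,\delta)\,\hat R(\theta,\delta)^{k-1}$. Assumption A makes $(\omega(x,\cdot))_{x\in\Z^d}$ i.i.d. under $\bbP$, and the paths of the $Q$-walk between two consecutive renewal times $\tau_{i-1}$ and $\tau_i$ visit sites whose $\langle\cdot,\ell\rangle$-coordinate lies in pairwise disjoint intervals (immediate from $\tau_k=S_{W_k}$ together with $\beta_{W_k}=\infty$). Consequently, the segment products $\Pi_i:=\prod_{j=\tau_{i-1}+1}^{\tau_i}\xi(X_{j-1},\Delta_j(X))$ are $\bbP$-independent, so $\E\prod_{j=1}^{\tau_k}\xi_j=\prod_{i=1}^k\E[\Pi_i]$. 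Combined with the $Q_0$-independence of the regeneration increments $(X_{\tau_i}-X_{\tau_{i-1}},\tau_i-\tau_{i-1})$ for $i\geq 2$ (whose common law is that of $(X_{\tau_1},\tau_1)$ under $\overline{Q}_0$, as recorded in Subsection \ref{sec:renewal}), this yields the identity.

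The second step is to bound $\sum_{k\geq 1}\hat F_k(\theta,\delta)$ from above. Setting $\overline Z_n(\theta):=E^Q_0\big(\e^{\langle\theta,X_n\rangle}\E\prod_{j=1}^n\xi(X_{j-1},\Delta_j(X))\big)$ and using that the events $\{\tau_k=n\}$ are pairwise disjoint in $k$,
\[
\sum_{k\geq 1}\hat F_k(\theta,\delta) \;=\; \sum_{n\geq 1}\e^{-(\overline{\Lambda}_a(\theta)+\delta)n}\,E^Q_0\Big(\e^{\langle\theta,X_n\rangle}\E\prod_{j=1}^n\xi_j\,;\,n=\tau_k\text{ for some }k\Big) \;\leq\; \sum_{n\geq 1}\overline Z_n(\theta)\,\e^{-(\overline{\Lambda}_a(\theta)+\delta)n}.
\]
By Corollary \ref{cor:exist}, $\overline Z_n(\theta)=\e^{(\overline{\Lambda}_a(\theta)+o(1))n}$, so the right-hand side is finite for every $\delta>0$. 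Combining with the factorization from Step 1 gives $\hat F_1(\theta,\delta)\sum_{k\geq 0}\hat R(\theta,\delta)^k<\infty$, and since $\hat F_1(\theta,\delta)>0$ (as $\tau_1<\infty$ $Q_0$-a.s. and the integrand is strictly positive), this forces $\hat R(\theta,\delta)<1$. Letting $\delta\downarrow 0$ completes the proof.

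The only point requiring care is the factorization $\E\prod_{j=1}^{\tau_k}\xi_j=\prod_{i=1}^k\E[\Pi_i]$ across renewal intervals; this reduces to the geometric observation that distinct renewal slabs occupy disjoint horizontal strips of $\Z^d$, which is built into the regeneration construction, so I do not anticipate a real obstacle.
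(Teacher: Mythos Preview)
Your proof is correct and follows essentially the same approach as the paper: introduce a $\delta$-discount, use the renewal factorization $\hat F_k=\hat F_1\hat R^{k-1}$ (the paper's \eqref{eq:descomptau}--\eqref{eq:renew}), bound via the asymptotic $\overline Z_n=\e^{(\overline\Lambda_a(\theta)+o(1))n}$, and conclude by monotone convergence as $\delta\downarrow 0$. The only cosmetic difference is that the paper bounds each $\hat F_n$ individually (showing $\hat F_n\to 0$) rather than summing them, but the content is the same.
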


\begin{lemma}\label{lemma3} There exists $\gamma_1=\gamma_1(y)>0$ such that, for any $\P \in \mathcal{P}_\kappa$, whenever $|\theta| \vee \mathrm{dis}(\mathbb{P}) < \gamma_{1}$,
	\begin{equation}\label{eq5}
	\overline{E}^Q_0\Big(\e^{\langle \theta,X_{\tau_{1}}\rangle -\overline{\Lambda}_{a}(\theta) \tau_{1}}\E\prod_{j=1}^{\tau_{1}}\xi(X_{j-1},\Delta_j(X))\Big)\geq 1.
	\end{equation}
\end{lemma}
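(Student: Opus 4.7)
The plan is to argue by contradiction using a renewal generating-function identity that couples the Laplace transform
\[
\Psi(\theta,\lambda):=\overline{E}^Q_0\Big(\e^{\langle\theta,X_{\tau_1}\rangle-\lambda\tau_1}\,\E\prod_{j=1}^{\tau_1}\xi(X_{j-1},\Delta_j(X))\Big)
\]
to the divergence criterion of the generating function
\[
S(\lambda):=\sum_{n\ge 0}\e^{-\lambda n}\,E^Q_0\Big(\e^{\langle\theta,X_n\rangle}\,\E\prod_{j=1}^n\xi(X_{j-1},\Delta_j(X))\Big).
\]
Since Corollary \ref{cor:exist} gives $S(\lambda)=\infty$ for every $\lambda<\overline\Lambda_a(\theta)$, it will suffice to show that $\Psi(\theta,\lambda_0)<1$ implies $S(\lambda_0)<\infty$ for $\lambda_0$ in a suitable range; combined with the continuity and strict monotonicity of $\lambda\mapsto\Psi(\theta,\lambda)$, this will force $\Psi(\theta,\overline\Lambda_a(\theta))\ge 1$, i.e.\ \eqref{eq5}.

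First I would pick $\gamma_1=\gamma_1(y)>0$ so small that, whenever $|\theta|\vee\mathrm{dis}(\bbP)<\gamma_1$, the combined exponent $C_\theta:=|\overline\Lambda_a(\theta)|+|\theta|+\overline\Lambda_a(0)+h(\mathrm{dis}(\bbP))+1$ is strictly below the threshold $\gamma_0(y)$ of Proposition \ref{prop:exp_moments}. This is feasible by the Lipschitz bound $|\overline\Lambda_a(\theta)|\le|\theta|+\overline\Lambda_a(0)$ of Remark \ref{rem:bound}, combined with the elementary estimate $0\le\overline\Lambda_a(0)\le\log(1+\mathrm{dis}(\bbP))$. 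With this choice, Proposition \ref{prop:exp_moments} delivers the workhorse bound $E^Q_0[\tau_1\e^{C_\theta\tau_1}]<\infty$, uniformly in $\bbP\in\mathcal P_\kappa$ with $\mathrm{dis}(\bbP)<\gamma_1$, which will underpin every finiteness claim below.

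Next I would split $S(\lambda_0)$ according to the last regeneration time $\tau_k\le n<\tau_{k+1}$ before $n$. Two structural facts from Section \ref{sec:renewal} drive the decomposition: (i) under $\overline Q_0$ the increments $(X_{\tau_{k+1}}-X_{\tau_k},\tau_{k+1}-\tau_k)$, $k\ge 0$, are i.i.d.\ with the law of $(X_{\tau_1},\tau_1)$ under $\overline Q_0$ (and under $Q_0$ the same is true for $k\ge 1$); and (ii) since $\tau_{k+1}=S_{W_{k+1}}$ is the hitting time of a new $\ell$-record while $\beta_{W_k}=\infty$ keeps the walk weakly above $\langle X_{\tau_k},\ell\rangle$ on $[\tau_k,\infty)$, the sites $X_j$ visited on $\tau_k\le j<\tau_{k+1}$ all satisfy $\langle X_j,\ell\rangle\in[\langle X_{\tau_k},\ell\rangle,\langle X_{\tau_{k+1}},\ell\rangle-1]$, so consecutive regeneration blocks visit pairwise \emph{disjoint} sites. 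Because $\omega$ is i.i.d.\ under $\P$, this disjointness makes the annealed product $\E\prod_{j=1}^{\tau_k}\xi$ factor block-by-block, and combining this with (i) would yield
\[
S(\lambda_0)=E^Q_0[Z]+\frac{M\cdot\overline{E}^Q_0[Z]}{1-\Psi(\theta,\lambda_0)},
\]
where $Z:=\sum_{s=0}^{\tau_1-1}\e^{-\lambda_0 s+\langle\theta,X_s\rangle}\E\prod_{j=1}^s\xi$ and the first-block correction $M:=E^Q_0[\e^{-\lambda_0\tau_1+\langle\theta,X_{\tau_1}\rangle}\E\prod_{j=1}^{\tau_1}\xi]$ are both pointwise dominated by $\tau_1\e^{C_\theta\tau_1}$ and hence finite by the previous step.

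Assuming $\Psi(\theta,\overline\Lambda_a(\theta))<1$ for some admissible $\theta$ and $\bbP$, dominated convergence (using again the uniform bound $E^Q_0[\tau_1\e^{C_\theta\tau_1}]<\infty$) gives continuity and strict decrease of $\lambda\mapsto\Psi(\theta,\lambda)$, producing $\lambda_0<\overline\Lambda_a(\theta)$ with $\Psi(\theta,\lambda_0)<1$; the display above would then force $S(\lambda_0)<\infty$, contradicting $\overline\Lambda_a(\theta)>\lambda_0$ via Corollary \ref{cor:exist}. I expect the main technical obstacle to be the careful verification of the disjointness property (ii) and the resulting block-wise factorization of $\E\prod_{j=1}^{\tau_k}\xi$ under the i.i.d.\ law of $\omega$; once these are in place, everything else reduces to the uniform-in-$\bbP$ exponential moment bounds already encapsulated in Proposition \ref{prop:exp_moments}.
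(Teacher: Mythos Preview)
Your approach is essentially the same as the paper's: both decompose by regeneration blocks, use the disjointness of the slabs visited in distinct blocks to factor the annealed $\xi$-product, and argue by contradiction that if the conditioned first-block Laplace transform were $<1$ at some $\lambda_0$ slightly below $\overline\Lambda_a(\theta)$ then the $n$-th term would stay bounded (the paper) or the generating series $S(\lambda_0)$ would converge (you), contradicting the growth rate given by Corollary~\ref{cor:exist}. The paper packages this as a uniform-in-$n$ bound on $E^Q_0(\Xi_{n,r}(\theta))$ via a geometric series, whereas you sum over $n$ first; these are two equivalent ways of reading the same renewal identity.

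Two small slips to fix. First, your definition $C_\theta:=|\overline\Lambda_a(\theta)|+|\theta|+\overline\Lambda_a(0)+h(\mathrm{dis}(\bbP))+1$ cannot be made $<\gamma_0(y)$ in general, since $\gamma_0$ is some fixed small constant; the ``$+1$'' should be replaced by a small margin $\epsilon>0$ coming from $|\lambda_0-\overline\Lambda_a(\theta)|$. Second, the lower bound $\overline\Lambda_a(0)\ge 0$ is not justified (and is not needed): what you actually use is $|\overline\Lambda_a(0)|\le -\log(1-\mathrm{dis}(\bbP))\to 0$, which follows directly from $\xi\in[1-\mathrm{dis}(\bbP),1+\mathrm{dis}(\bbP)]$. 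With these corrections your argument goes through and matches the paper's.
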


Postponing the proofs of these lemmas for a moment, let us finish the proof of Proposition \ref{prop:norm1}. For $\theta \in \R^d, \P \in \mathcal{P}_\kappa$ such that $|\theta| \vee \mathrm{dis}(\P) < \gamma_1$ we may define the probability measure $\mu^{(\theta)}$ on $\Z^d$ as
\begin{equation}\label{eq:Probtheta}
\mu^{(\theta)}(x):=\overline{E}^Q_0\Big(\e^{\langle \theta,X_{\tau_{1}}\rangle -\overline{\Lambda}_{a}(\theta) \tau_{1}}\E\prod_{j=1}^{\tau_{1}}\xi(X_{j-1},\Delta_j(X))\,;\, X_{\tau_{1}}=x\Big)
\end{equation} and consider the random walk $Y^{(\theta)}=(Y^{(\theta)}_n)_{n \in \N_0}$ with jump distribution $\mu^{(\theta)}$. Then, if $\widehat{E}^{(\theta)}_0$ denotes expectation with respect to $\widehat{P}^{(\theta)}$, the law of $Y^{(\theta)}$ starting from $0$, we have that
\begin{equation}\label{eq:rwt}
\lim_{n \to \infty} \E \Phi_n(\theta) = \frac{1}{\widehat{E}^{(\theta)}_0( \langle Y_n , \ell \rangle )}.
\end{equation} Indeed, using \eqref{eq:m1} and the renewal structure of the $Q$-random walk, for each $n \geq 1$ we have
\begin{align}
\E \Phi_{n}(\theta) &=\sum_{k=1}^{\infty}\overline{E}^Q_0\Big(\e^{\langle \theta,X_{\tau_{k}}\rangle -\overline{\Lambda}_{a}(\theta) \tau_{k}}\E\prod_{j=1}^{\tau_{k}}\xi(X_{j-1},\Delta_j(X))\,;\,L_{n}=\tau_{k}\Big) \nonumber \\
&=\sum_{k=1}^{\infty}\widehat{P}^{(\theta)}_0(\langle Y_{k},\ell\rangle=n)= \widehat{P}^{(\theta)}_0( \langle Y_k,\ell \rangle = n \text{ for some }k \geq 1) \label{eq:prob1}
\end{align} so that \eqref{eq:rwt} is now a consequence of the renewal theorem for the sequence $(\langle Y_k - Y_{k-1},\ell\rangle)_{k \geq 1}$. Finally, Proposition \ref{prop:norm1} then follows \eqref{eq:rwt} and the next lemma. 

\begin{lemma}\label{lema:rwt} There exists $\gamma_1=\gamma_1(y)>0$ such that, for any $\P \in \mathcal{P}_\kappa$, whenever $|\theta| \vee \mathrm{dis}(\P) < \gamma_1$, 
	\[
	\widehat{E}^{(\theta)}_0( \langle Y_n , \ell \rangle )< \infty.
	\]
\end{lemma}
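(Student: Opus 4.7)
The plan is to reduce to showing finiteness of the single-step moment $\widehat{E}^{(\theta)}_0(\langle Y_1, \ell\rangle)$, since by the i.i.d.\ increment structure of $Y^{(\theta)}$ under $\widehat{P}^{(\theta)}_0$ we have $\widehat{E}^{(\theta)}_0(\langle Y_n, \ell\rangle) = n\,\widehat{E}^{(\theta)}_0(\langle Y_1, \ell\rangle)$. Unpacking the definition of $\mu^{(\theta)}$ in \eqref{eq:Probtheta}, this single-step moment equals
\[
\overline{E}^Q_0\Big(\langle X_{\tau_1}, \ell\rangle\, \e^{\langle \theta, X_{\tau_1}\rangle - \overline{\Lambda}_a(\theta)\tau_1}\, \E\prod_{j=1}^{\tau_1} \xi(X_{j-1}, \Delta_j(X))\Big),
\]
so the problem reduces to bounding this expectation uniformly in $\bbP \in \mathcal{P}_\kappa$ with $\mathrm{dis}(\bbP) < \gamma_1$.

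The next step is to dominate the integrand pointwise by an exponential of $\tau_1$ with a small rate. Since the $Q$-walk is nearest-neighbor, $|\langle X_{\tau_1}, \ell\rangle| \leq \tau_1$ and $|\langle \theta, X_{\tau_1}\rangle| \leq |\theta|\tau_1$. The Lipschitz bound $|\overline{\Lambda}_a(\theta) - \overline{\Lambda}_a(0)| \leq |\theta|$ from Remark \ref{rem:bound} gives $-\overline{\Lambda}_a(\theta)\tau_1 \leq -\overline{\Lambda}_a(0)\tau_1 + |\theta|\tau_1$, while the second inequality in Remark \ref{rem:bound} (a $\bbP$-a.s.\ deterministic bound that therefore survives taking $\E$) yields
\[
\e^{-\overline{\Lambda}_a(0)\tau_1}\, \E\prod_{j=1}^{\tau_1} \xi(X_{j-1}, \Delta_j(X)) \;\leq\; \e^{h(\mathrm{dis}(\bbP))\tau_1}.
\]
Combining these estimates, the integrand is pointwise dominated by $\tau_1\, \e^{(2|\theta| + h(\mathrm{dis}(\bbP)))\tau_1}$.

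To finish, I would choose $\gamma_1 \in (0,1)$ so small (depending only on $y$) that $|\theta| \vee \mathrm{dis}(\bbP) < \gamma_1$ forces $2|\theta| + h(\mathrm{dis}(\bbP)) < \gamma_0/2$, where $\gamma_0 = \gamma_0(y)$ is the constant from Proposition \ref{prop:exp_moments}; this is possible because $h$ is continuous with $h(0) = 0$. Then applying the elementary inequality $t \leq C_0\, \e^{\gamma_0 t/2}$ for a universal $C_0 > 0$, together with $E^Q_0(\e^{\gamma_0 \tau_1}) \leq 2$ from Proposition \ref{prop:exp_moments} and $\overline{E}^Q_0(Z) \leq \overline{c}^{-1}\, E^Q_0(Z)$ for non-negative $Z$ (which follows from Lemma \ref{lemma:pos} since $\overline{Q}_0 = Q_0(\cdot \mid \beta_0 = \infty)$), one concludes that $\widehat{E}^{(\theta)}_0(\langle Y_1, \ell\rangle)$ is finite, uniformly over the allowed range of $\theta$ and $\bbP$. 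If necessary, one takes this $\gamma_1$ to be the minimum with the constant appearing in Proposition \ref{prop:norm1}.

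I do not anticipate any serious obstacle: the only conceptual point is that $\gamma_1$ must be chosen independently of $\bbP$, which comes for free since each ingredient invoked (the bounds in Remark \ref{rem:bound}, Proposition \ref{prop:exp_moments}, and the lower bound $\overline{c}$ from Lemma \ref{lemma:pos}) is itself uniform in $\bbP \in \mathcal{P}_\kappa$. Thus the result is essentially a routine integrability computation given the machinery developed earlier in the section.
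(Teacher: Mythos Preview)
Your proof is correct and follows essentially the same route as the paper: bound $\langle X_{\tau_1},\ell\rangle$ by $\tau_1$, absorb $\tau_1$ into the exponential via $\tau_1 \leq C\e^{\delta\tau_1}$, control the remaining exponent by $(2|\theta|+h(\mathrm{dis}(\bbP)))\tau_1$ using Remark \ref{rem:bound}, and finish with the uniform exponential moment of Proposition \ref{prop:exp_moments} together with Lemma \ref{lemma:pos}. The paper's version is just slightly more compressed, invoking the bound \eqref{eq:boundsup} from the proof of Lemma \ref{lemma3} rather than re-deriving it.
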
 Thus, in order to complete the proof of Proposition \ref{prop:norm1} we only need to prove Lemmas \ref{lemma1}, \ref{lemma3} and \ref{lema:rwt} above. The rest of this subsection is devoted to this. 

\begin{proof}[Proof of Lemma \ref{lemma1}] Given $\delta>0$, let us write $\eta_{\theta,\delta}:=\overline{\Lambda}_{a}(\theta)+\delta$ for simplicity and for $n \geq 1$ define 
	\[
	\Upsilon_{n,\delta}(\theta):= E^Q_0\Big(\e^{\langle \theta,X_{\tau_{n}}\rangle -\eta_{\theta,\delta} \tau_{n}}\E\prod_{j=1}^{\tau_{n}}\xi(X_{j-1},\Delta_j(X))\Big).
	\]Then, by splitting the expectation in the definition of $\Upsilon_{n,\delta}(\theta)$ according to the different possible values for $\tau_n$, we have as in \eqref{eq:bound1} that 
	\begin{equation}\label{eq:bound2}
	\Upsilon_{n,\delta}(\theta) \leq \sum_{k=n}^{\infty}\e^{-\eta_{\theta,\delta}k}E^Q_0\Big(\e^{\langle \theta,X_{k}\rangle}\E\prod_{j=1}^{k}\xi(X_{j-1},\Delta_j(X))\Big)\\
	\end{equation} Since, for some $o(1) \to 0$ as $k \to \infty$ we have
	\begin{equation}\label{eq:convannealed}
	E^Q_0\Big(\e^{\langle \theta, X_{k}\rangle}\E\prod_{j=1}^{k}\xi(X_{j-1},\Delta_j(X))\Big) = \e^{(\overline{\Lambda}_{a}(\theta)+o(1))k},
	\end{equation} from \eqref{eq:bound2} we obtain that for all $n$ sufficiently large (depending on $\delta$)
	\begin{equation}\label{eq:boundupsilon}
	\Upsilon_{n,\delta}(\theta) \leq \sum_{k=n}^\infty \e^{-\tfrac{\delta}{2}k} = \frac{\e^{-\tfrac{\delta}{2}n}}{1-\e^{-\tfrac{\delta}{2}}}.
	\end{equation} On the other hand, by the renewal structure, we have $Q_0$-almost surely, 
	\begin{equation}\label{eq:descomptau}
	\E\prod_{j=1}^{\tau_{n}}\xi(X_{j-1},\Delta_j(X)) = \prod_{i=0}^{n-1} \left(\E \prod_{j=\tau_i+1}^{\tau_{i+1}}\xi(X_{j-1},\Delta_j(X))\right).
	\end{equation} From this, using the renewal structure once again together with the translation invariance of $\P$, we see that for all $n \geq 1$
	\begin{equation}\label{eq:renew}
	\Upsilon_{n,\delta}(\theta) = \Upsilon_{1,\delta}(\theta) \left( \overline{E}^Q_0\Big(\e^{\langle \theta,X_{\tau_{1}}\rangle -\eta_{\theta,\delta}\tau_{1}}\E\prod_{j=1}^{\tau_{1}}\xi(X_{j-1},\Delta_j(X))\Big)\right)^{n-1}.
	\end{equation} Since $\Upsilon_{1,\delta}(\theta)>0$, in light of \eqref{eq:boundupsilon} we conclude that 
	\[
	\overline{E}^Q_0\Big(\e^{\langle \theta,X_{\tau_{1}}\rangle -\overline{\Lambda}_{a}(\theta) \tau_{1}}\E\prod_{j=1}^{\tau_{1}}\xi(X_{j-1},\Delta_j(X))\Big) \leq \e^{-\tfrac{\delta}{2}}.
	\] Letting $\delta \searrow 0$, by monotone convergence we get the desired result.
\end{proof}

\begin{proof}[Proof of Lemma \ref{lemma3}] Given $\theta \in \R^d$, $n\geq 1$ and $r\in \R$, let us write
	\begin{equation}\label{eq:defxi}
	\Xi_{n,r}(\theta):=\e^{\langle \theta, X_{n}\rangle-rn}\E\prod_{j=1}^{n}\xi(X_{j-1},\Delta_j(X)).
	\end{equation} Then, by splitting $E^Q_0(\Xi_{n,r}(\theta))$ according to the different events $\{n \in (\tau_m,\tau_{m+1}]\,,\,n=\tau_m+i\}$ for $m=0,\dots,n-1$ and $i=1,\dots,n$ and using the Markov property at $\tau_m$, we see that
	\begin{align}
	E^Q_0(\Xi_{n,r}(\theta)) &\leq \sum_{m=0}^{n-1}\sum_{i=1}^n E^Q_0(\Xi_{\tau_m,r}(\theta)\,;\,\tau_m=n-i) \overline{E}^Q_0(\Xi_{i,r}(\theta)\,;\,\tau_1> i) \nonumber\\ \nonumber &\leq \sum_{m=0}^{n-1} E^Q_0(\Xi_{\tau_m,r}(\theta)) \overline{E}^Q_0\Big(\sup_{i \leq \tau_1} \Xi_{i,r}(\theta)\Big) \nonumber \\
	& \leq \overline{E}^Q_0\Big(\sup_{i \leq \tau_1} \Xi_{i,r}(\theta)\Big) \left( 1 + E^Q_0\Big(\sup_{i \leq \tau_1} \Xi_{i,r}(\theta)\Big)\sum_{m=1}^{\infty} \Big(\overline{E}^Q_0(\Xi_{\tau_1,r})\Big)^{m-1}\right) \label{eq:boundunif}
	\end{align} where, in order to obtain the last inequality, we have used that for $m \geq 1$,
	\[
	E^Q_0(\Xi_{\tau_m,r}(\theta)) =E^Q_0(\Xi_{\tau_1,r}) \Big(\overline{E}^Q_0(\Xi_{\tau_1,r})\Big)^{m-1}
	\] which follows from the renewal structure as in \eqref{eq:renew}.
	
	Now, if we take then $r=\overline{\Lambda}_a(\theta) -\delta$ for some $\delta> 0$ then by Remark \ref{rem:bound} we have, for any $i \geq 1$,
	\[
	\Xi_{i,r}(\theta) \leq \exp\left( \big(2|\theta|+h(\mathrm{dis}(\P))+\delta\big)i\right).
	\] If we choose $\gamma_1$ and $\delta$ small enough (but depending only on $y$) so that $2|\theta|+h(\mathrm{dis}(\P)) +\delta < \tfrac{\gamma_0}{2}$ whenever $|\theta|\vee \mathrm{dis}(\P) < \gamma_1$, where $\gamma_{0}$ is as in Proposition \ref{prop:exp_moments}, then we obtain that
	\begin{equation}\label{eq:boundsup}
	E^Q_0\Big(\sup_{i \leq \tau_1} \Xi_{i,r}(\theta)\Big) \leq E^Q_0\Big(\e^{\tfrac{\gamma_{0}}{2}\tau_1}\Big)< \infty,
	\end{equation} and combining \eqref{eq:boundsup}  with Lemma \ref{lemma:pos} shows that $\overline{E}^Q_0(\sup_{i \leq \tau_1} \Xi_{i,r}(\theta))<\infty$ as well. Thus, since the bound in \eqref{eq:boundunif} is uniform in $n$, if $\overline{E}^Q_0\big(\Xi_{\tau_1,r}(\theta)\big)< 1$ then we would have $\sup_{n \geq 1} E^Q_0( \Xi_{n,r}(\theta))< \infty$, and this in turn would imply that
	\[
	\lim_{n \to \infty} \frac{1}{n} \log E^Q_0( \Xi_{n,r}(\theta)) = 0.
	\] However, observe that by choice of $r$, definition of $_{n,r}$ and \eqref{eq:convannealed}, we have that
	\[
	\lim_{n \to \infty} \frac{1}{n} \log E^Q_0( \Xi_{n,r}(\theta)) = \delta
	\] so that in reality whenever $|\theta|\vee \mathrm{dis}(\P) < \gamma_1$ we must have
	\[
	1 \leq \overline{E}^Q_0\big(\Xi_{_1,r}(\theta)\big)=\overline{E}^Q_0\Big(\e^{\langle \theta, X_{\tau_1}\rangle-(\overline{\Lambda}_a(\theta)-\delta)\tau_1}\E\prod_{j=1}^{\tau_1}\xi(X_{j-1},\Delta_j(X))\Big). 
	\] Letting $\delta \searrow 0$, by dominated convergence we get the desired result (note that we can  indeed use dominated convergence since $\overline{E}^Q_0\big(\Xi_{\tau_1,r}(\theta)\big)<\infty$ for  $r=\overline{\Lambda}_a(\theta) -\delta$ and $\delta>0$ sufficiently small, by \eqref{eq:boundsup} and choice of $\gamma_0$). This concludes the proof.
\end{proof}

\begin{proof}[Proof of Lemma \ref{lema:rwt}] Since $\langle Y_1 , \ell \rangle \leq \tau_1$ by definition of $\tau_1$, using also that $\tau_1 \leq \frac{1}{\delta}\e^{\delta \tau_1}$ for any $\delta >0$, we see that
	\[
	\widehat{E}^{(\theta)}_0( \langle Y_n , \ell \rangle ) \leq \frac{1}{\delta} \overline{E}^Q_0\Big(\e^{\langle \theta,X_{\tau_{1}}\rangle -(\overline{\Lambda}_{a}(\theta)-\delta) \tau_{1}}\E\prod_{j=1}^{\tau_{1}}\xi(X_{j-1},\Delta_j(X))\Big)
	\] and so the lemma now follows as in the proof of \eqref{eq:boundsup}.	
\end{proof}

\subsection{Proof of Proposition \ref{prop:norm2}} We will show that there exists a constant $\gamma_2 > 0$, depending only on $y,d$ and $\kappa$ such that, for any $\P \in \mathcal{P}_\kappa$, if $\mathrm{dis}(\mathbb{P}) < \gamma_2$ then 
\[
\sup_{n\geq 1\,,\,|\theta|<\gamma_2}\E (\Phi_{n}(\theta))^{2}<\infty.
\]
This is equivalent to showing that
\begin{equation}\label{eq:to_prove_2}
\sup_{n\geq 1\,,\,|\theta| < \gamma_2} \overline{E}^Q_{0,0}\Big(\e^{\langle \theta,  X_{L_{n}}+\widetilde{X}_{\widetilde{L}_{n}}\rangle-\overline{\Lambda}_{a}(\theta)(L_{n}+\widetilde{L}_n)} \E\prod_{j=1}^{L_{n}}\xi(X_{j-1},\Delta_j(X))\prod_{j=1}^{\widetilde{L}_{n}}\xi(\widetilde{X}_{j-1},\Delta_j(\widetilde{X}))\,;\, n\in \mathcal{L}\Big) < \infty,
\end{equation}
where $X=(X_n)_{n\in \N_0}$ and $\widetilde{X}=(\widetilde{X}_n)_{n \in \N_0}$ are independent copies of the conditioned random walk with law $\overline{Q}_0$, $\widetilde{L}_n$ and $\widetilde{\tau}_{n}$ are the analogues of $L_n$ $\tau_{n}$ but for $\widetilde{X}$, and 
\begin{equation}\label{eq: reg_levels}
\mathcal{L}:=\{n\geq 0: \langle X_{i},\ell\rangle \geq n \text{ for all } i\geq L_{n}\,,\,\langle \widetilde{X}_{j},\ell\rangle \geq n\text{ for all } j\geq \widetilde{L}_{n}\}
\end{equation} are the so-called \textit{common renewal levels}. In the sequel, we shall write $\overline{Q}_{x,\widetilde{x}}:=\overline{Q}_x \times \overline{Q}_{\widetilde{x}}$ and $\overline{E}^Q_{x,\widetilde{x}}$ to denote expectation with respect to $\overline{Q}_{x,\widetilde{x}}$. 

In order to check \eqref{eq:to_prove_2}, let us introduce, for $x \in \Z^d$, $e \in \bbV$ and $n\geq 1$, the quantities
\[
N_{x,e}(n):=\# \{ j \in \{1,\dots, n\} : X_{j-1}=x\,,\,\Delta_{j}(X)=e\}=\sum_{j=1}^n \1{1}_{x}(X_{j-1})\1{1}_e(\Delta_{j}(X))
\] and 
\[
N_{x}(n):=\#\{ j \in \{1,\dots, n\} : X_{j-1}=x\}=\sum_{e \in \bbV} N_{x,e}(n),
\] as well as the corresponding analogues $\widetilde{N}_{x,e}(n)$ and $\widetilde{N}_x(n)$ for $\widetilde{X}$. Then, using that by definition of $\mathrm{dis}(\P)$ we have that, for all $x \in \Z^d$, $e \in \bbV$ and $h$ as in Remark \ref{rem:bound}, the inequality
\[
\omega(x,e) \leq \widetilde{\omega}(x,e)\e^{h(\mathrm{dis}(\P))}
\] holds almost surely for any pair of independent environments $\omega$ and $\widetilde{\omega}$ with law $\P$, we have
\begin{align*}
\E \prod_{j=1}^{L_{n}}&\omega(X_{j-1},\Delta_j(X))\prod_{j=1}^{\widetilde{L}_{n}}\omega(\widetilde{X}_{j-1},\Delta_j(\widetilde{X})) =\prod_{x \in \Z^d}\E\prod_{e \in \bbV}\omega(x,e)^{N_{x,e}(L_{n})+\widetilde{N}_{x,e}(\widetilde{L}_{n})}\\
&\leq \prod_{x \in \Z^d}\E\left[\prod_{e \in \bbV}\omega(x,e)^{N_{x,e}(L_{n})}\right]\E\left[\prod_{e \in \bbV}\omega(x,e)^{\widetilde{N}_{x,e}(\widetilde{L}_{n})}\right]\e^{h(\mathrm{dis}(\P)) [N_{x}(L_{n})\wedge \widetilde{N}_{x}(\widetilde{L}_{n})]}\\
&=\E\left[\prod_{j=1}^{L_{n}}\omega(X_{j-1},\Delta_j(X)) \right]\E\left[\prod_{j=1}^{\widetilde{L}_{n}}\omega(\widetilde{X}_{j-1},\Delta_j(\widetilde{X})) \right]\e^{h(\mathrm{dis}(\P)) I_n},
\end{align*}
where 
$$
I_n:=\sum_{x \in \Z^d}[N_{x}(L_{n})\wedge \widetilde{N}_{x}(\widetilde{L}_{n})].
$$
 Hence, we conclude that the supremum in \eqref{eq:to_prove_2} is bounded from above by 
\begin{equation}\label{eq:to_prove_3}
A:=\sup_{n \geq 1\,,\,|\theta|<\gamma_2\,,\, z\in \bbV_{d}}A_{z,n}(\theta),
\end{equation}
where, for $z \in \bbV_{d}:=\{z\in \Z^{d}:\langle z, \ell\rangle=0\}$ and $n \geq 1$, we define
\begin{equation}\label{eq:An}
A_{z,n}(\theta):=\overline{E}^Q_{0,z}\big(F_n(\theta)\,;\,n\in \mathcal{L}\big)
\end{equation} with 
$$
F_n(\theta):= \phi_n(\theta)\widetilde{\phi}_n(\theta)\e^{h(\mathrm{dis}(\P)) I_n},
$$
 where
\begin{equation}\label{eq:Gn}
\phi_n(\theta):=\e^{\langle \theta, X_{L_n}-X_0\rangle-\overline{\Lambda}_{a}(\theta)L_{n}} \E\prod_{j=1}^{L_{n}}\xi(X_{j-1}-X_0,\Delta_j(X))
\end{equation} and $\widetilde{\phi}_n(\theta)$ is defined analogously but interchanging $(X,L_n)$ with $(\widetilde{X},\widetilde{L}_n)$. 

In order to prove Proposition \ref{prop:norm2}, we will show that $A$ is finite provided that $\theta \vee \mathrm{dis}(\P)$ is taken sufficiently small (depending only on $y,d$ and $\kappa$). To this end, let us set
\begin{equation}
\zeta:=\inf\{m\geq 0: \exists\, i,j \geq 1 \text{ such that } X_{i}=\widetilde{X}_{j}\text{ and }\langle X_{i},\ell\rangle=m\},
\end{equation}
i.e. the first level in which both walks intersect at a time other than zero. Observe that whenever $1 \leq n\leq \zeta$ we have $X_i \neq \widetilde{X}_j$ for all $i < L_n$ and $j < \widetilde{L}_n$, so that $I_n = 1 \leq 1$, with the only possible non-vanishing term being $x=0$. In particular, by virtue of independence and the definition of $\mathcal{L}$, we obtain that, for $\gamma_1=\gamma_1(y)>0$ as in the proof of Proposition \ref{prop:norm1} and any $\P \in \mathcal{P}_\kappa$, whenever $|\theta| \vee \mathrm{dis}(\P) < \gamma_1 \wedge \frac{1}{2}$ we have
\begin{align*}
\overline{E}^Q_{0,z}\big( F_n(\theta)\,;\,n\in \mathcal{L}\,,\,n \leq \zeta\big) & \leq \overline{E}^Q_{0,z}\Big( \phi_n(\theta)\widetilde{\phi}_n(\theta)\e^{h(1/2)},n\in \mathcal{L}\Big)\\
&=\e^{h(1/2)} \Big[\E \Phi_n(\theta) \Big]^2 \leq \e^{h(1/2)}
\end{align*} where for the last inequality we have used that $\E \Phi_n(\theta) \leq 1$ since it coincides with a probability by \eqref{eq:prob1}. In light of this bound we see that, in order to show that $A$ is finite, it only remains to obtain a suitable control on the expectation
\begin{equation}\label{eq:secondterm}
\overline{E}^Q_{0,z}\Big( F_n(\theta)\,;\,n\in \mathcal{L}\,,\,n > \zeta\Big).
\end{equation} 

To this end, define
\begin{equation}\label{eq:sigma1}
\sigma:=\inf\{k\in \mathcal{L}:k>\zeta\},
\end{equation}
i.e. the first common renewal level after the walks first intersect (at a time other than zero). Then, by \eqref{eq:descomptau}, the Markov property and translation invariance, \eqref{eq:secondterm} can be rewritten as
\begin{align*}
\sum_{k=1}^{n}\overline{E}^Q_{0,z}\Big( F_n(\theta)&\,;\,n\in \mathcal{L}\,,\,\sigma=k\Big)\\
&=\sum_{k=1}^{n}\sum_{z'\in \bbV_{d}}\overline{E}^Q_{0,z}\Big( F_k(\theta)\, ;\,\sigma=k\,,\,\widetilde{X}_{\widetilde{L}_{k}}-X_{L_{k}}=z'\Big)
\overline{E}^Q_{0,z'}\Big( F_{n-k}(\theta)\,;\,n-k\in \mathcal{L}\Big)\\
&\leq \sum_{k=1}^{n}\overline{E}^Q_{0,z}\Big( F_k(\theta)\,;\,\sigma=k\Big)\sup_{z'\in \bbV_{d}}A_{z',n-k}(\theta),
\end{align*}
where we use the convention $A_{z',0}(\theta):= 1$ and, to obtain the first equality, we have used that $N_x(L_k) = N_x(L_n)$ whenever $\langle x,\ell \rangle < k$ and $N_x(L_k) = 0$ whenever $\langle x,\ell\rangle \geq k$ (and the analogous statements for $\widetilde{N}_x$). Now, if we set 
\begin{equation}\label{eq:Bn}
B_{z,n}(\theta):=\overline{E}^Q_{0,z}\big( F_n(\theta)\,;\,\sigma=n\big),
\end{equation}
then by the arguments above, for any $\P \in \mathcal{P}_\kappa$, $n \geq 1$ and $z \in \bbV_d$, whenever $|\theta| \vee \mathrm{dis}(\P) < \gamma_{1} \wedge \frac{1}{2}$ we have \begin{equation}\label{eq:renewal1}
A_{z,n}(\theta)\leq \e^{h(1/2)} +\sum_{k=1}^{n}B_{z,k}(\theta)\sup_{z'\in \bbV_{d}}A_{z',n-k}(\theta).
\end{equation}
The next lemma will be crucial to conclude the proof.

\begin{lemma}\label{lemma8}
	There exists $\gamma_{3}=\gamma_3(y,d,\kappa) > 0$ such that, for any $\P \in \mathcal{P}_\kappa$, whenever $|\theta| \vee \mathrm{dis}(\mathbb{P}) < \gamma_3$,\begin{equation*}
	B:=\sup_{z\in \bbV_{d}}\sum_{n=1}^{\infty}B_{z,n}(\theta)<1.
	\end{equation*}
\end{lemma}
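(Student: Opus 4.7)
The plan is to show that the sum $\sum_{n \geq 1} B_{z,n}(\theta)$ is uniformly (in $z \in \bbV_d$) strictly below $1$, provided $|\theta|$ and $\mathrm{dis}(\bbP)$ are sufficiently small. The starting point is to sum over $n$. Since common renewal levels are a.s.\ unbounded once $\zeta < \infty$ (Lemma \ref{lemma:pos} applied to each walk separately yields positive density of single-walk renewal levels, and the renewal theorem then makes their intersection a.s.\ infinite), the events $\{\sigma < \infty\}$ and $\{\zeta<\infty\}$ agree up to null sets, giving
$$\sum_{n\geq 1} B_{z,n}(\theta) = \overline{E}^Q_{0,z}\Bigl(\phi_\sigma(\theta)\,\widetilde\phi_\sigma(\theta)\,\e^{h(\mathrm{dis}(\bbP))\,I_\sigma};\,\zeta<\infty\Bigr).$$
Next I would decompose this expectation via the strong Markov property applied at the first meeting time $T := \inf\{i\geq 1: X_i = \widetilde X_j\text{ for some }j\geq 1\}$ (and the analogous $\widetilde T$). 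The weight factorizes into a pre-meeting part depending only on the walks up to $(T,\widetilde T)$, and a post-meeting part which, conditionally on the common meeting site $X_T=\widetilde X_{\widetilde T}$, has the same distribution as an analogous expression on re-started walks starting from that site.

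For the pre-meeting part, Remark \ref{rem:bound} together with the pointwise bound $|\phi_T|\leq \e^{(|\theta|+h(\mathrm{dis}(\bbP)))T}$ and the uniform exponential moment of $T$ (adapting Proposition \ref{prop:exp_moments}) give
$$\overline{E}^Q_{0,z}\bigl(\phi_T\,\widetilde\phi_{\widetilde T};\,T,\widetilde T<\infty\bigr) \leq \bigl(1 + O(|\theta|+h(\mathrm{dis}(\bbP)))\bigr)\,\overline{Q}_{0,z}(\zeta<\infty).$$
The crucial input is then the uniform non-intersection bound: for $d\geq 4$, using Lemma \ref{lemma:prop}(P1--P2) so that the $Q$-walks are uniformly elliptic ballistic walks with deterministic drift $y$, the Bolthausen--Sznitman estimate \cite{BS02} (invoked in Step 2 of Section \ref{sec:proof:sketch}) yields $\sup_{\bbP\in\mathcal P_\kappa,\, z\in\bbV_d}\overline{Q}_{0,z}(\zeta<\infty) \leq c_0 < 1$ for some $c_0=c_0(y,d,\kappa)$. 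The post-meeting part, by iterating the same decomposition and summing the resulting geometric series whose ratio is again bounded by $c_0$ times a perturbation factor, contributes a bounded multiplicative constant; equivalently, the total sum $S_z := \sum_n B_{z,n}(\theta)$ satisfies an inequality of the form $S_z \leq c_0\bigl(1+O(|\theta|+h(\mathrm{dis}(\bbP)))\bigr)\bigl(1+\sup_{z'}S_{z'}\bigr)$, from which $\sup_z S_z < 1$ follows by taking $\gamma_3$ sufficiently small. The exponential factor $\e^{h(\mathrm{dis}(\bbP))I_\sigma}$ is absorbed either into the iteration (each newly visited intersection carries one $\e^{h}$ factor) or via a H\"older split against the uniform finiteness of $\overline{E}^Q_{0,z}(I_\infty)$ in $d\geq 4$.

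The main obstacle is producing the \emph{uniform} non-intersection bound $c_0=c_0(y,d,\kappa)<1$ over the family $\mathcal P_\kappa$ and all $z\in\bbV_d$. Although the $Q$-walks live in a deterministic environment (so the setting is simpler than the shared-random-environment problems of \cite{Y11,BZ08}), the jump distribution $u(e)$ depends on $\alpha=\E\omega$ and hence varies with $\bbP$; all constants in Bolthausen--Sznitman's argument must therefore be controlled uniformly. The saving grace is Lemma \ref{lemma:prop}(P1), which gives $u(e)\geq c_\kappa>0$ uniformly, together with (P2), which fixes the drift at $y\neq 0$; these two uniformities are precisely what allow the non-intersection estimate to apply with constants depending only on $y$, $d$, and $\kappa$, and the dimension restriction $d\geq 4$ enters here and nowhere else.
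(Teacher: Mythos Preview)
Your proposal has two genuine gaps. First, the self-referential inequality $S_z \leq c_0(1+O(\cdot))(1+\sup_{z'} S_{z'})$ only yields $\sup_z S_z < 1$ when $c_0(1+O(\cdot)) < \tfrac12$. But the non-intersection bound you invoke (which is the paper's Lemma~\ref{lemma:cot1}) only gives $\sup_{z}\overline Q_{0,z}(\zeta<\infty)<1-\delta$ for some small $\delta=\tfrac12 c_\kappa^{2N}$; when the walks start from the same site ($z=0$) this probability can be arbitrarily close to $1$, so you cannot assume $c_0<\tfrac12$. Second, the claimed ``uniform exponential moment of $T$'' is false: $T$ is the first intersection time of two independent $d$-dimensional walks, and on $\{T<\infty\}$ its tail is polynomial (governed by the Green's function of the difference walk), not exponential. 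Proposition~\ref{prop:exp_moments} concerns the renewal time $\tau_1$ along the drift direction and does not adapt to intersection times. Without exponential moments of $T$, the bound $\overline E^Q_{0,z}(\phi_T\widetilde\phi_{\widetilde T};T<\infty)\leq (1+O(\cdot))\,\overline Q_{0,z}(\zeta<\infty)$ is unjustified. There is also a structural issue with the factorization: after the walks meet, what remains in $F_\sigma$ is the contribution up to the next \emph{common renewal level}, not up to a fresh $\sigma$ for restarted walks, so the post-meeting piece is not $\sup_{z'}S_{z'}$ but rather a $D$-type quantity as in \eqref{eq:Dn}.

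The paper circumvents all of this by a perturbative comparison rather than an iteration on $S$. It introduces the zero-disorder, zero-$\theta$ version $B^*_{z,n}(0)$ and proves three facts: (i) $\sup_z\sum_n B^*_{z,n}(0)=\sup_z\overline Q_{0,z}(\sigma<\infty)<1-\delta$ (Lemma~\ref{lemma:cot1}); (ii) the perturbed sum $\sum_n\sup_{\bbP,|\theta|,z}B_{z,n}(\theta)$ is a priori finite (Lemma~\ref{lemma:cot3}), obtained by decomposing at the last common renewal level $\psi$ \emph{before} $\zeta$ and using the Bolthausen--Sznitman local estimate $\sup_x\sum_k\mu_k^{(\theta)}(x)\leq K_1(1+j)^{-(d-1)/2}$, whose summability is exactly where $d\geq 4$ enters; and (iii) each fixed term $B_{z,n}(\theta)-B^*_{z,n}(0)$ is small when $|\theta|\vee\mathrm{dis}(\bbP)$ is small (Lemma~\ref{lemma:cot2}). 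Then one uses (ii) to cut the tail $n>N$ below $\delta/4$, (iii) to make the first $N$ terms within $\delta/4$ of their unperturbed versions, and (i) to conclude $B<1-\delta/2$. This strategy needs only $c_0<1$, not $c_0<\tfrac12$, and never requires moments of the intersection time.
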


\noindent{\bf Completing proof of Proposition \ref{prop:norm2} (Assuming Lemma \ref{lemma8}):} By \eqref{eq:renewal1}, if we fix $N \geq 1$ then for any $n \leq N$ we have 
\begin{equation*}
A_{z,n}(\theta)\leq \e^{h(1/2)}+\Big(\sup_{m \leq N, z\in \bbV_{d}}A_{z,m}(\theta)\Big)\sum_{k=1}^{N}B_{z,k}(\theta),
\end{equation*} so that, upon taking suprema, we find
\begin{equation*}
\Big(1-\sum_{k=1}^{N}B_{z,k}(\theta)\Big)	\sup_{n \leq N, z\in \bbV_{d}}A_{z,n}(\theta)\leq \e^{h(1/2)}.
\end{equation*} Hence, whenever $|\theta| \vee \mathrm{dis}(\P) < \gamma_3 \wedge \gamma_1 \wedge \frac{1}{2}=:\gamma_2$, letting $N\to \infty$ we conclude by Lemma \ref{lemma8} that $A\leq \frac{\e^{h(1/2)}}{(1-B)}<\infty$ and thus Proposition \ref{prop:norm2} follows. \qed

\smallskip 

Hence, it only remains to prove Lemma \ref{lemma8}. 

\subsection{Proof of Lemma \ref{lemma8}.} 
We will need the aid of three additional lemmas. Before stating these, we introduce $B_{z,n}^*(\theta)$, the zero-disorder version of $B_{z,n}(\theta)$, given by the formula 
\[
B^*_{z,n}(\theta):=\overline{E}^Q_{0,z}\left(\e^{\langle \theta,  X_{L_{n}}+(\widetilde{X}_{\widetilde{L}_n}-z)\rangle-\overline{\Lambda}^*_{a}(\theta)(L_{n}+\widetilde{L}_n)}\,;\,\sigma=n\right),
\] where $\overline{\Lambda}^*_a(\theta):=\lim_{n\to\infty}\frac{1}{n}\log E^Q_{0}(\e^{\langle \theta,X_{n}\rangle})$ (note that this limit exists by Corollary \ref{cor:exist} applied to the particular case of zero-disorder environmental laws). The three additional lemmas we need are then the following:

\begin{lemma}\label{lemma:cot1} Given $\kappa>0$, there exists $\delta=\delta(y,d,\kappa)>0$ such that, for any $\P\in \mathcal{P}_\kappa$, 
	\[
	\sup_{z \in \bbV_{d}} \sum_{n=1}^\infty B^*_{z,n}(0) = \sup_{z \in \bbV_{d}} \overline{Q}_{0,z}(\sigma< \infty) < 1 - \delta.
	\]
\end{lemma}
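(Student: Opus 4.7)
The plan is to unwind the definition of $B^*_{z,n}(0)$ and reduce the statement to a uniform non-intersection estimate for two independent copies of the $Q$-walk. First, I would observe that at $\theta = 0$ one has $\overline{\Lambda}_a^*(0) = \lim_n \frac{1}{n}\log E_0^Q(1) = 0$ and the exponential factor in the definition of $B^*_{z,n}(0)$ is identically $1$, so
\[
B^*_{z,n}(0) = \overline{Q}_{0,z}(\sigma = n).
\]
Summing over $n$ yields the first equality $\sum_n B^*_{z,n}(0) = \overline{Q}_{0,z}(\sigma < \infty)$. Because $\sigma$ is by definition a common renewal level strictly larger than $\zeta$, one has $\{\sigma<\infty\} \subseteq \{\zeta<\infty\}$, and the whole claim reduces to producing $\delta = \delta(y,d,\kappa) > 0$ with $\sup_{z \in \bbV_d} \overline{Q}_{0,z}(\zeta < \infty) \leq 1 - \delta$ uniformly in $\bbP \in \mathcal{P}_\kappa$.

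The key analytic input is the intersection estimate of Bolthausen and Sznitman \cite{BS02}, applied to the $Q$-walk rather than to the original RWRE. This is legitimate because by (P1)--(P2) of Lemma \ref{lemma:prop} the jump weights $u(e)$ are uniformly bounded below by $c_\kappa > 0$ over $\bbP \in \mathcal{P}_\kappa$ and the walk has constant ballistic speed $y$. For $d \geq 4$, this yields the uniform expected-intersection bound
\[
\overline{E}^Q_{0,z}\Big[\,\sum_{i,j \geq 1} \mathbf{1}_{\{X_i = \widetilde{X}_j\}}\,\Big] \leq C(y,d,\kappa),
\]
with $d \geq 4$ entering through the fact that the transverse $(d-1)$-dimensional spread makes the relevant two-point Green's-function sum $\sum_w G(0,w)\,G(z,w)$ convergent.

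This expectation bound does not directly give the required strict inequality, so I would split into two regimes to promote it. For $|z| \geq R_0$ with $R_0 = R_0(y,d,\kappa)$ sufficiently large, an Azuma--Hoeffding bound applied to the martingales $X_n - ny$ and $\widetilde{X}_n - ny - z$ (whose increments are bounded by $2$) shows that with probability at least $1/2$ both walks remain in tubes of radius $|z|/3$ around their respective ballistic rays for all time; these tubes are disjoint, forcing $\zeta = \infty$. For $|z| < R_0$, a compactness argument applies: the function $(\alpha, z) \mapsto \overline{Q}_{0,z}(\zeta < \infty)$ is continuous on the compact set $\mathcal{M}_1^{(\kappa)}(\bbV) \times \{z \in \bbV_d : |z| \leq R_0\}$ (continuity in $\alpha$ obtained by truncating in the number of steps and using the uniform exponential bounds on $\tau_1$ from Proposition \ref{prop:exp_moments}, as in Lemma \ref{lemma:unifstar}), and strictly less than $1$ at each point (the expected-intersection bound ensures transience of the $Q$-walk, so a positive-probability event of immediate first-step divergence followed by non-return gives $\overline{Q}_{0,z}(\zeta=\infty)>0$). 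Compactness then delivers the uniform $1-\delta$ bound in this second regime.

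I expect the main obstacle to lie in this last step: the Bolthausen--Sznitman estimate yields only finiteness of the intersection expectation, so the strict inequality requires carefully combining the quantitative ballistic separation for large $|z|$ with the compactness-plus-continuity argument for bounded $|z|$, together with the verification that the continuity in $\alpha$ is genuinely uniform across the whole elliptic class $\mathcal{P}_\kappa$.
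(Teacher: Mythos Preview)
Your reduction $\sum_n B^*_{z,n}(0)=\overline Q_{0,z}(\sigma<\infty)$ and the split into large and small $|z|$ are both correct and match the paper's architecture. But your treatment of the large-$|z|$ regime has a genuine gap.

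The claim that Azuma--Hoeffding keeps each walk inside a tube of \emph{fixed} radius $|z|/3$ around its ballistic ray \emph{for all time} with probability at least $1/2$ is false. The martingale $M_n=X_n-ny$ has fluctuations of order $\sqrt n$, so $\sup_n|M_n|=\infty$ almost surely; no fixed-width tube contains the trajectory. What is actually needed to make $\overline Q_{0,z}(\zeta<\infty)$ small for large $|z|$ is a Green's-function/local-CLT input, not a concentration input. The paper obtains this by checking that the law $\mu^*$ of $X_{\tau_1}$ under $\overline Q_0$ satisfies the Bolthausen--Sznitman conditions (C1)--(C3) uniformly over $\mathcal P_\kappa$, which yields the uniform local estimate $\sup_{z}\overline Q_0(X_{\tau_n}=z)\le \eta_3\,n^{-d/2}$ (their (D3)); together with the uniform bounds $Q_0(\beta_0=\infty)>\eta_1$ and $E^Q_0(\tau_1^9)<\eta_2$, this feeds into the Berger--Zeitouni argument (\cite[Propositions~3.1, 3.4]{BZ08}) to produce an $N=N(y,d,\kappa)$ with $\sup_{|z|\ge 2N}\overline Q_{0,z}(\sigma<\infty)\le 1/2$. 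Your expected-intersection bound from \cite{BS02} is in the right spirit, but you use it only to get transience, not to get decay in $|z|$; a uniform bound $\le C$ on the expected number of intersections says nothing about $P(\zeta<\infty)$ being bounded away from $1$.

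For small $|z|$, your compactness-plus-continuity route is different from the paper's and would need more work: continuity of $\alpha\mapsto \overline Q_{0,z}(\zeta<\infty)$ involves the whole infinite trajectory, and your proposed truncation relies on uniform decay of $\overline Q_{0,z}(\zeta\ge n)$, which is close to what you are trying to prove. The paper bypasses this entirely with a direct elliptic construction: force the two walks to take $N$ deterministic steps in opposite transverse directions $\pm e^*$ (cost $\ge c_\kappa^{2N}$ by (P1)), landing at separation $\ge 2N$, and then invoke the large-$|z|$ bound. This gives $\overline Q_{0,z}(\sigma=\infty)\ge \tfrac12 c_\kappa^{2N}$ for all $|z|<2N$, with no continuity argument needed.
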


\begin{lemma}\label{lemma:cot3} Given $\kappa>0$, there exist $\gamma_4=\gamma_4(y,d,\kappa),K_0=K_0(y,d,\kappa)>0$ such that 
	\[
	\sum_{n=1}^\infty  \left[\sup_{\P \in \mathcal{P}_\kappa(\gamma_4)\,,\,|\theta| < \gamma_4\,,\,z \in \bbV_{d}} B_{z,n}(\theta)\right] \leq\ K_0,
	\] where $\mathcal{P}_\kappa(\gamma_4):=\{ \P \in \mathcal{P}_\kappa : \mathrm{dis}(\P)<\gamma_4\}$.
\end{lemma}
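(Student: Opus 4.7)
The strategy is to use a change-of-measure argument to absorb the factors $\phi_\sigma(\theta)$ and $\widetilde{\phi}_\sigma(\theta)$ as Radon-Nikodym derivatives that convert $\overline{Q}_{0,z}$ into a product of size-biased tilted measures; what remains is then controlled by intersection estimates of Bolthausen-Sznitman type \cite{BS02} in dimension $d \geq 4$.

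Since the events $\{\sigma = n\}_{n \geq 1}$ partition $\{\sigma < \infty\}$, one first rewrites
\[
\sum_{n \geq 1} B_{z,n}(\theta) = \overline{E}^Q_{0,z}\bigl(\phi_\sigma(\theta) \widetilde{\phi}_\sigma(\theta)\, \e^{h(\mathrm{dis}(\bbP)) I_\sigma}\,;\, \sigma < \infty\bigr).
\]
On $\{\sigma < \infty\}$, both $L_\sigma$ and $\widetilde{L}_\sigma$ are renewal times of their respective walks, and since different renewal blocks visit disjoint levels (hence disjoint sites) and $\bbP$ is a product measure, the factor $\E\prod_{j=1}^{L_\sigma}\xi(X_{j-1}, \Delta_j(X))$ factorizes across renewal blocks. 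Consequently, $\phi_\sigma(\theta) = \prod_{i=1}^{k_\sigma}\psi_i$, where the $\psi_i$ are i.i.d. under $\overline{Q}_0$ with mean one by Proposition \ref{prop:norm1}, and similarly $\widetilde{\phi}_\sigma = \prod_{j=1}^{\widetilde{k}_\sigma} \widetilde{\psi}_j$. Letting $\widehat{Q}^{(\theta)}_{0,z}$ denote the product of size-biased measures, in which each walk's renewal increments are independently re-weighted by the mean-one factors $\psi_i$ (respectively $\widetilde{\psi}_j$), one obtains
\[
\sum_{n \geq 1} B_{z,n}(\theta) = \widehat{E}^{(\theta)}_{0,z}\bigl(\e^{h(\mathrm{dis}(\bbP)) I_\sigma}\,;\,\sigma < \infty\bigr) \leq \widehat{E}^{(\theta)}_{0,z}\bigl(\e^{h(\mathrm{dis}(\bbP)) J}\bigr),
\]
where $J := \sum_{x \in \Z^d}[N_x(\infty) \wedge \widetilde{N}_x(\infty)]$ is the total number of intersections of the two walks.

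Under $\widehat{Q}^{(\theta)}_{0,z}$, each walk has i.i.d.~renewal increments and is ballistic with asymptotic velocity $\nabla \overline{\Lambda}_a(\theta)$, which is close to the $Q$-drift $y$ for $|\theta| \vee \mathrm{dis}(\bbP)$ small (by Proposition \ref{lemma9}) and uniformly over $\bbP \in \mathcal{P}_\kappa$. In dimension $d \geq 4$, Bolthausen-Sznitman intersection estimates applied to such ballistic walks give the uniform bound
\[
\sup_{z \in \bbV_{d}\,,\,|\theta| < \gamma_4\,,\,\bbP \in \mathcal{P}_\kappa(\gamma_4)} \widehat{E}^{(\theta)}_{0,z}\bigl(\e^{\lambda_0 J}\bigr) \leq M_0 < \infty
\]
for some $\lambda_0 = \lambda_0(y,d,\kappa) > 0$, provided $\gamma_4$ is taken small enough so that the tilted drifts remain uniformly bounded away from zero. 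Shrinking $\gamma_4$ further if needed so that $h(\mathrm{dis}(\bbP)) < \lambda_0$ yields the conclusion with $K_0 := M_0$.

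The main technical obstacle lies in verifying rigorously the change-of-measure identity and the hypotheses of the Bolthausen-Sznitman estimate in this setting. In particular, one must check that the size-biasing preserves the renewal-block factorization under $\sigma$, that $\sigma$ interacts correctly with this factorization even though it depends on both walks jointly, and that the tilted increment distributions retain enough regularity (uniform positive drift, sufficient ellipticity) to invoke the intersection estimates with constants depending only on $y$, $d$, and $\kappa$.
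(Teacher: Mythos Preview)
Your strategy---absorb $\phi_\sigma,\widetilde\phi_\sigma$ as Radon--Nikodym densities and reduce to an intersection estimate under a tilted law---is in the right spirit, and the change-of-measure step can in fact be justified: the event $\{k_\sigma=k,\widetilde k_\sigma=m\}$ is measurable with respect to the first $k$ renewal blocks of $X$ and the first $m$ of $\widetilde X$, so $(k_\sigma,\widetilde k_\sigma)$ is a joint stopping time and optional stopping for the product martingale $\prod_i\psi_i\prod_j\widetilde\psi_j$ gives the identity you want. The genuine gap is the final claim that $\sup_{z,\theta,\bbP}\widehat E^{(\theta)}_{0,z}(e^{\lambda_0 J})\leq M_0$. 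The estimates in \cite{BS02} are Green's-function bounds of the form $\sup_{\langle x,\ell\rangle=j}\sum_k\mu_k(x)\leq K_1(1+j)^{-(d-1)/2}$; they do not yield exponential moments for the total intersection count $J$. Under $\widehat Q^{(\theta)}_{0,z}$ the two walks have the \emph{same} drift, so their difference at common renewal levels is a centered walk on $\bbV_d\cong\Z^{d-1}$; converting its transience ($d-1\geq 3$) into an exponential-moment bound for $J$ requires a separate argument (geometric tails for the number of close approaches, uniform control of intersections within each slab, and---since the tilt reweights entire renewal blocks, not just endpoints---control of the within-block trajectory under the tilted law). None of this is supplied, and it is not a routine consequence of \cite{BS02}.

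The paper avoids this difficulty by a different decomposition. Rather than bounding by the full count $J$, it introduces $\psi:=\sup\{n\in\mathcal L:n\leq\zeta\}$, the last common renewal level \emph{before} the first intersection, and splits $B_{z,n}$ at $\psi$. On $\{\psi=j\}$ one has $I_j\leq 1$, so the tilt to $\mu^{(\theta)}$ applies cleanly on the pre-$\psi$ piece and the BS02 Green's-function bound gives a factor $(1+j)^{-(d-1)/2}$, summable for $d\geq 4$. The post-$\psi$ piece is the quantity $D_{n-j,z'}$, and a separate lemma shows $\sum_n\sum_{z'}\sup D_{n,z'}(\theta)<\infty$ via exponential tails for the first common renewal level and polynomial decay in $|z'|$. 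This route needs only summability (first-moment) estimates, never an exponential moment for $J$, which is why it goes through without the obstacle you identify.
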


\begin{lemma}\label{lemma:cot2} For every $n \geq 1$ and $\eta > 0$ there exists $\gamma_5=\gamma_5(y,n,\eta)>0$ such that, for any $\P \in \mathcal{P}_\kappa$, whenever $\mathrm{dis}(\P) < \gamma_5$ one has
	\[
	\sup_{|\theta|< \gamma_5\,,\,z \in \bbV_{d}} \big[B_{z,n}(\theta) - B^*_{z,n}(0)\big] < \eta.
	\]	
\end{lemma}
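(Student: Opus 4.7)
The plan is to show that for fixed $n$, the integrand $F_n(\theta)$ converges uniformly to $1$ (the zero-disorder, $\theta=0$ value) on the relevant event, after discarding a tail of large $L_n+\widetilde L_n$. Since $B^*_{z,n}(0)=\overline Q_{0,z}(\sigma=n)$, one simply writes
\[
B_{z,n}(\theta)-B^*_{z,n}(0)=\overline E^Q_{0,z}\!\big((F_n(\theta)-1)\,\mathbbm 1_{\{\sigma=n\}}\big),
\]
and uniformity in $z\in \bbV_d$ is automatic, because by translation invariance of the $Q$-walk the joint law of $(L_n,\widetilde L_n, I_n, X_{L_n}-X_0,\widetilde X_{\widetilde L_n}-z)$ under $\overline Q_{0,z}$ is independent of $z$. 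The argument proceeds by a truncation at a fixed level $M$.

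\medskip

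\textbf{Bounded part.} On $\{L_n+\widetilde L_n\le M\}$, combine three pointwise facts: (i) $\xi\in[1-\mathrm{dis}(\bbP),1+\mathrm{dis}(\bbP)]$ a.s., whence $\E\prod_{j=1}^k\xi\in[(1-\mathrm{dis}(\bbP))^k,(1+\mathrm{dis}(\bbP))^k]$; (ii) consequently $|\overline\Lambda_a(0)|\le h(\mathrm{dis}(\bbP))$ with $h$ as in Remark \ref{rem:bound}; (iii) $|\overline\Lambda_a(\theta)-\overline\Lambda_a(0)|\le|\theta|$. A direct term-by-term estimate of $\log F_n(\theta)$ then yields, on this event,
\[
|\log F_n(\theta)|\le (2|\theta|+3h(\mathrm{dis}(\bbP)))\,(L_n+\widetilde L_n)\le (2|\theta|+3h(\mathrm{dis}(\bbP)))M,
\]
so that $|F_n(\theta)-1|\le e^{(2|\theta|+3h(\mathrm{dis}(\bbP)))M}-1$. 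For fixed $M$ this vanishes as $|\theta|\vee\mathrm{dis}(\bbP)\to 0$, uniformly over $\bbP\in\mathcal P_\kappa$ and $z$.

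\medskip

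\textbf{Tail part.} Write $|F_n(\theta)-1|\le F_n(\theta)+1\le 2e^{C(L_n+\widetilde L_n)}$ with $C:=2|\theta|+3h(\mathrm{dis}(\bbP))$. Restrict attention to $|\theta|\vee\mathrm{dis}(\bbP)$ small enough that $4C<\gamma_0$, where $\gamma_0=\gamma_0(y)$ is from Proposition \ref{prop:exp_moments}. Since $L_n\le\tau_n$ pathwise under $\overline Q_0$ and $\tau_n$ decomposes as a sum of $n$ i.i.d.\ copies of $\tau_1$ under $\overline Q_0$ (each with the same exponential moments as $\tau_1$ under $Q_0$, by the renewal structure of Section \ref{sec:renewal} and Lemma \ref{lemma:pos}), Proposition \ref{prop:exp_moments} gives a bound $\overline E^Q_{0,z}(e^{2C(L_n+\widetilde L_n)})\le K_{n,y,\kappa}$ independent of $\bbP\in\mathcal P_\kappa$ and $z$. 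By Cauchy--Schwarz,
\[
\overline E^Q_{0,z}\!\big(e^{C(L_n+\widetilde L_n)}\mathbbm 1_{\{L_n+\widetilde L_n>M\}}\big)\le K_{n,y,\kappa}^{1/2}\,\overline Q_{0,z}(L_n+\widetilde L_n>M)^{1/2},
\]
and the exponential Markov inequality applied to the same uniform moment makes the last factor tend to $0$ as $M\to\infty$, again uniformly in $\bbP$ and $z$.

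\medskip

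\textbf{Conclusion and main difficulty.} Given $\eta>0$, first pick $M=M(y,n,\kappa,\eta)$ large enough that the tail contribution is below $\eta/2$ for all admissible parameters; then choose $\gamma_5=\gamma_5(y,n,\eta)>0$ small enough that on $\{L_n+\widetilde L_n\le M\}$ the bound $e^{(2\gamma_5+3h(\gamma_5))M}-1<\eta/2$. This yields the claim (in fact a two-sided version). The only real subtlety is the uniformity over the class $\mathcal P_\kappa$: it is supplied by Lemma \ref{lemma:prop}(P1), which provides a $\kappa$-dependent lower bound on the $u$-weights of the $Q$-walk, and by Proposition \ref{prop:exp_moments}, which gives exponential moments for $\tau_1$ with a rate depending only on $y$. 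Everything else is routine.
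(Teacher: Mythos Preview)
Your proof is correct and takes a genuinely different, more streamlined route than the paper. One small slip: you assert that the joint law of $(L_n,\widetilde L_n, I_n, X_{L_n}-X_0,\widetilde X_{\widetilde L_n}-z)$ under $\overline Q_{0,z}$ is independent of $z$, but this is false for $I_n$, which counts common visits of the two walks and clearly depends on their relative starting positions. Fortunately your argument does not actually use this; you only use the crude bound $I_n\le L_n+\widetilde L_n$, after which everything is a function of $(L_n,\widetilde L_n)$ alone, whose law \emph{is} independent of $z$. So the conclusion stands, but the sentence should be rephrased accordingly. (A second cosmetic point: your constants $K_{n,y,\kappa}$ and $M(y,n,\kappa,\eta)$ need not carry $\kappa$, since Lemma~\ref{lemma:pos} and Proposition~\ref{prop:exp_moments} give bounds depending on $y$ only; this matches the statement's $\gamma_5=\gamma_5(y,n,\eta)$.)

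By comparison, the paper keeps the indicator $\mathbbm 1_{\{\sigma=n\}}$ and splits according to $|z|$: for $|z|>R_0$ it applies H\"older to get a factor $\overline Q_{0,z}(\sigma=n)^{1/2}$, which is then made small by the observation that $\{\sigma=n\}$ forces an intersection of the two walks and hence $\tau_n\vee\widetilde\tau_n\ge|z|/2$; for $|z|\le R_0$ it uses the mean value theorem directly on $F_n(\theta)-1$. Your approach avoids the intersection argument entirely by discarding $\mathbbm 1_{\{\sigma=n\}}\le 1$ and truncating on $L_n+\widetilde L_n$; this is cruder but perfectly adequate here, and it yields the two-sided bound $|B_{z,n}(\theta)-B^*_{z,n}(0)|<\eta$ for free. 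The paper's route, by retaining information about $\{\sigma=n\}$, gives sharper decay in $|z|$, but that extra precision is not needed for this lemma.
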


Proofs of Lemma \ref{lemma-cot1} - Lemma \ref{lemma:cot2} span Section \ref{sec:cot2} - Section \ref{sec:cot1}.
Assuming these, let us first complete 

\noindent{\bf Proof of Lemma \ref{lemma8} (assuming Lemma \ref{lemma:cot1}-Lemma \ref{lemma-cot2}):}

	Take $\delta=\delta(y,d,\kappa)>0$ as in Lemma \ref{lemma:cot1}. Since $B^*_{z,n}(\theta) \geq 0$, by Lemma \ref{lemma:cot3} there exists $\gamma_4=\gamma_4(y,d,\kappa)>0$ and $N=N(y,d,\kappa,\delta)\geq 1$ such that, for any $\P \in \mathcal{P}_\kappa$, if $\mathrm{dis}(\P) < \gamma_4$ then
	\begin{equation}\label{eq:cot1}
	\sum_{n  > N} \big(\sup_{|\theta| < \gamma_4\,,\,z \in \bbV_{d}} [B_{z,n}(\theta)-B^*_{z,n}(0)] \big) \leq \sum_{n > N} \big(\sup_{|\theta| < \gamma_4\,,\,z \in \bbV_{d}} B_{z,n}(\theta)\big) < \frac{\delta}{4}.
	\end{equation} Furthermore, by Lemma \ref{lemma:cot2} there exists $\gamma_5=\gamma_5(y,d,\kappa,N,\delta) > 0$ such that, for any $\P \in \mathcal{P}_\kappa$, whenever $\mathrm{dis}(\P) < \gamma_5$ we have
	\begin{equation} \label{eq:cot2}
	\sum_{n=1}^N  \sup_{|\theta| < \gamma_5\,,\,z \in \bbV_{d}} [B_{z,n}(\theta)-B^*_{z,n }(0)] < \frac{\delta}{4}.
	\end{equation} Combined with \eqref{eq:cot1} and \eqref{eq:cot2}, Lemma \ref{lemma:cot1} then yields the bound
	\[
	B \leq \sup_{z \in \bbV^d} \sum_{n=1}^\infty  B^*_{z,n}(0) +  \sum_{n=1}^\infty \sup_{|\theta|< \gamma_2\,,\,z \in \bbV_{d}} \big[B_{z,n}(\theta) - B^*_{z,n }(0)\big] < 1-\frac{\delta}{2}
	\] for any $\P \in \mathcal{P}_\kappa$ such that $\mathrm{dis}(\P)< \gamma_3:= \gamma_4 \wedge \gamma_5$. 
\qed

\subsection{Proof of of Lemma \ref{lemma:cot2}.}\label{sec:cot2}
 For $z \in \bbV_d$ and $n\geq1$, by H\"{o}lder's inequality we have
 $$
 \begin{aligned}
	B_{z,n}(\theta) - B^*_{z,n}(0)= \overline{E}^Q_{0,z}\big(F_n(\theta) -1 \,;\,\sigma=n\big)
	&\leq\big[\overline{E}^Q_{0,z}\big(\big( F_n(\theta)-1\big)^2\big)\big]^{\tfrac{1}{2}} \big[\overline{Q}_{0,z}(\sigma=n)\big]^{\tfrac 12}\\
	& \leq \big[\overline{E}^Q_{0,z}\big(\big( F_n(\theta)\big)^2\big) + 1\big]^{\tfrac 12} \big[\overline{Q}_{0,z}(\sigma=n)\big]^{\tfrac 12}.
	\end{aligned}
	$$
	Now, on the one hand, by Remark \ref{rem:bound}, the bounds $I_n \leq L_{n}\leq \tau_{n}$ and the renewal structure, whenever $|\theta| \vee h(\mathrm{dis}(\P)) < \frac{\gamma_0}{16}$, where $\gamma_0$ is the constant from Proposition~\ref{prop:exp_moments}, we have that
	\begin{equation}\label{eq:db1}
	\overline{E}^Q_{0,z}\big(\big( F_n(\theta)\big)^2\big) \leq \overline{E}^Q_{0,z}\left(\e^{4(|\theta|+h(\mathrm{dis}(\P)))\tau_{n}}\right)=\left[\overline{E}^Q_{0}\left( \e^{4(|\theta|+h(\mathrm{dis}(\P)))\tau_1}\right)\right]^{n} \leq \left[\frac{2}{\overline{c}}\right]^{n}
	\end{equation} 
	where $\overline{c} >0$ is the constant from Lemma \ref{lemma:pos}. On the other hand, by the nature of renewal times, on the event that $\sigma=n$ there exist some $k \in \{1,\cdots, L_{n}\}$ and $k' \in \{1,\dots,\widetilde{L}_{n}\}$ such that $X_k=\widetilde{X}_{k'}$. In particular, it follows that
	\begin{align}
	\overline{Q}_{0,z}(\sigma=n)  &\leq \overline{Q}_0\big( \sup_{1 \leq k \leq L_{n}} |X_k| \geq \frac{|z|}{2}\big) + \overline{Q}_z\big( \sup_{1 \leq k' \leq \widetilde{L}_{n}} |X_{k'}-z| \geq \frac{|z|}{2}\big)\nonumber \\
	&=2 \overline{Q}_0\big(\sup_{1 \leq k \leq L_{n}} |X_k| \geq \frac{|z|}{2}\big) \nonumber\\
	&\leq 2\overline{Q}_0\big( \tau_{n} \geq \frac{|z|}{2}\b) \leq 4\frac{\overline{E}^Q_0(\tau_{n})}{|z|} = 4\frac{[\overline{E}^Q_0(\tau_{1})]^{n}}{|z|}. \label{eq:db2}
	\end{align} 
	From \eqref{eq:db1} and \eqref{eq:db2}, using Lemma \ref{lemma:pos} and Proposition \ref{prop:exp_moments} it is straightforward to check that there exists $R_0=R_0(y,n,\eta)>0$ such that if $\mathrm{dis}(\P)<h^{-1}(\frac{\gamma_0}{16})$ then
	\begin{equation}\label{esti-Lemma}
	\sup_{|\theta|<\frac{\gamma_0}{16}\,,\,|z| > R_0} [B_{z,n}(\theta) - B^*_{z,n}(0)] < \eta.
	\end{equation}
	Finally, by an argument similar to the one used for \eqref{eq:db1}, Remark \ref{rem:bound} and the mean value theorem together yield that
	$|B_{z,n}(\theta) - B^*_{z,n}(0)| \leq 2(|\theta|+h(\mathrm{dis}(\P)))\overline{E}^Q_{0,z}\big(\tau_{n}\e^{2(|\theta|+h(\mathrm{dis}(\P)))\tau_n }\big)$ for any fixed $z \in \Z^d$. In particular, by Lemma \ref{lemma:pos} and Proposition \ref{prop:exp_moments} it follows that for any $R > 0$ there exists $\gamma_R = \gamma_R(y,n,R,\eta)>0$ such that if $|\theta| \vee \mathrm{dis}(\P) < \gamma_R$ then $\sup_{|\theta|<\gamma_R\,,\,|z| \leq R} [B_{z,n}(\theta) - B^*_{z,n}(0)] < \eta$. Then \eqref{esti-Lemma} yields the result with $\gamma_{5}:=h^{-1}(\frac{\gamma_0}{16}) \wedge \gamma_{R_0}$.

\subsection{Proof of Lemma \ref{lemma:cot3}.}\label{sec:cot3}
Next, we prove Lemma \ref{lemma:cot3}. If we set $\psi:=\sup\{n\in \mathcal{L}:n\leq \tau\}$ then, similarly to \eqref{eq:secondterm}, we can decompose 
	\begin{align}
	B&_{z,n}(\theta)= \sum_{j=0}^{n-1}\overline{E}^Q_{0,z}\Big( F_n(\theta)\,;\,\sigma=n\,,\,\psi=j\Big)\nonumber\\
	&\hspace{-0.2cm}=\sum_{j=0}^{n-1}\sum_{z'\in \bbV_{d}}\overline{E}^Q_{0,z}\Big(F_j(\theta)\,;\ \widetilde{X}_{\widetilde{L}_{j}}-X_{L_{j}}=z'\,,\,\psi=j\Big)\overline{E}^Q_{0,z'}\Big( F_{n-j}(\theta)\,;\,n-j=\inf\{k\in \mathcal{L}:k>0\} >\tau\Big)\nonumber\\
	&\hspace{-0.2cm}\leq \sum_{j=0}^{n-1}\bigg[\sup_{z'\in \bbV_{d}}\overline{E}^Q_{0,z}\Big(F_j(\theta)\,;\,,\widetilde{X}_{\widetilde{L}_{j}}-X_{L_{j}}=z'\,\,,\psi=j\Big)\bigg]\sum_{z'\in \bbV_{d}}D_{n-j,z'}(\theta), \label{eq:lastdisplay2}
	\end{align} where, for $n \geq 1$ and $z' \in \bbV_d$, we write
	\begin{equation}\label{eq:Dn}
	D_{n,z'}(\theta):=\overline{E}^Q_{0,z'}\Big(F_n(\theta)\,;\,n=\inf\{k\in \mathcal{L}:k>0\}>\tau\Big).
	\end{equation}
	
	Note that $\psi=j$ implies that $I_j \leq 1$ so that, recalling the random walk $Y^{(\theta)}$ with law $\widehat{P}^{(\theta)}_0$ defined in the proof of Proposition \ref{prop:norm1}, if we write $\widehat{P}^{(\theta)}_{0,0}:=\widehat{P}^{(\theta)}_{0}\times\widehat{P}^{(\theta)}_{0}$ then for any $j \geq 1$  we have 
	$$
	\begin{aligned}
	\overline{E}^Q_{0,z}\Big(F_j&(\theta)\,;\,\widetilde{X}_{\widetilde{L}_{j}}-X_{L_{j}}=z'\,,\,\psi=j\Big) \leq \e^{h(\mathrm{dis}(\P))}\overline{E}^Q_{0,z}\Big( \phi_j(\theta)\widetilde{\phi}_j(\theta)\,;\,\widetilde{X}_{\widetilde{L}_{j}}-X_{L_{j}}=z'\,,\,j\in \mathcal{L}\Big) \\
	&=\e^{h(\mathrm{dis}(\P))}\widehat{P}^{(\theta)}_{0,0}\Big(\exists k,m:\langle Y_{k},\ell\rangle=j\,,\,\widetilde{Y}_{m}-Y_{k}=z'-z\Big) \\
	& \leq \e^{h(\mathrm{dis}(\P))}\sum_{\langle x,\ell\rangle=j}\widehat{P}^{(\theta)}_{0}\Big(\exists k : \langle Y_{k},\ell\rangle=x\Big) \widehat{P}^{(\theta)}_{0}\Big(\exists m:\langle \widetilde{Y}_{m},\ell\rangle=x+z'-z\Big).
	\end{aligned}
	$$
	Thus, 
	\begin{align}
	&\overline{E}^Q_{0,z}\Big(F_j(\theta)\,;\,\widetilde{X}_{\widetilde{L}_{j}}-X_{L_{j}}=z'\,,\,\psi=j\Big)\nonumber\\
		&\leq \e^{h(\mathrm{dis}(\P))}\big[\sup_{\langle x,\ell\rangle=j}\widehat{P}^{(\theta)}_{0}\big(\exists k : \langle Y_{k},\ell\rangle=x\big)\big]\sum_{\langle x,\ell\rangle=j}\widehat{P}^{(\theta)}_{0}\big(\exists m:\langle \widetilde{Y}_{m},\ell\rangle=x+z'-z\big)\label{eq:lasteq3}\\
	&= \e^{h(\mathrm{dis}(\P))}\big[\sup_{\langle x,\ell\rangle=j}\sum_{k\in \N}\widehat{P}^{(\theta)}_{0}\Big(\langle Y_{k},\ell\rangle=x\Big)\big]\widehat{P}^{(\theta)}_{0}( \exists\,m : \langle\widetilde{Y}_m,\ell\rangle = j)\nonumber\\
	& \leq \e^{h(\mathrm{dis}(\P))}\sup_{\langle x,\ell\rangle=j}\sum_{k\in \N}\mu_{k}^{(\theta)}(x),\label{eq:lasteq} 
	\end{align} where $\mu^{(\theta)}$ is as in \eqref{eq:Probtheta} and, given any probability measure $\mu$, $\mu_k$ denotes its $k$-fold convolution. Observe that for $j=0$ we obtain directly from \eqref{eq:lasteq3} the upper bound $\e^{h(\mathrm{dis}(\P))}$.
	
	Now, in the proof of \cite[Theorem 5.1]{BS02} it is shown that, whenever $d \geq 4$, given any $c_1,c_2,c_3 > 0$ there exists $K_1=K_1(d,c_1,c_2,c_3)>1$ such that for any $j \geq 1$
	\begin{equation} \label{eq:boundbz}
	\sup_{\langle x,\ell\rangle=j}\sum_{k\in \N}\mu_{k}(x) \leq \frac{K_1}{(1+j)^{(d-1)/2}}
	\end{equation} holds uniformly over all probability measures $\mu$ on $\Z^d$
	satisfying
	\begin{enumerate}
		\item [C1.] $\sum_{x \in \Z^d} \mu(x)\e^{c_1 |x|} \leq 2$,
		\item [C2.] $\Sigma_\mu \geq c_2 I_d$, where $I_d$ denotes the $d\times d$ identity matrix,
		\item [C3.] $|\sum_{x \in \Z^d} \langle x,\ell\rangle \mu(x)| > c_3$.
	\end{enumerate} More precisely, it is shown that for any measure $\mu$ satisfying these conditions and $k \in \N$ one has the estimate
	\[
	\mu_k(x) \leq C(\varphi_k^{(1)}(x) + \varphi_k^{(2)}(x))
	\] for some constant $C=C(d,c_1,c_2,c_3)>0$, where
	\[
	\sum_{k \in \N} \varphi_k^{(1)}(x) \leq \frac{K'_1}{(1+|x|)^{-(d-1)/2}} \hspace{1cm}\text{ and }\hspace{1cm}\sum_{k \in \N} \varphi_k^{(2)}(x) \leq K''_1\e^{ -\delta|x|}
	\] for some constants $\delta,K'_1,K''_1>0$ depending only on $d,c_1,c_2$ and $c_3$.
	
	Thus, to bound \eqref{eq:lasteq} we will show that there exists $\nu=\nu(y,d,\kappa) > 0$ such that, for any $\P \in \mathcal{P}_\kappa$, whenever $|\theta| \vee \mathrm{dis}(\P) < \nu$ the measure $\mu^{(\theta)}$ satisfies (C1)-(C2)-(C3) above for some $c_1,c_2,c_3>0$ depending only on $y,d$ and $\kappa$. Indeed, by the same type of argument leading to \eqref{eq:db1}, we have
	\[
	\Big|\sum_{x \in \Z^d} \mu^{(\theta)}(x)\e^{c_1 |x|}-1\Big| \leq (2|\theta|+h(\mathrm{dis}(\P))+c_1)\overline{E}^Q_0(\tau_1 \e^{(2|\theta|+h(\mathrm{dis}(\P))+c_1)\tau_1})
	\] so that, by Lemma \ref{lemma:pos} and Proposition \ref{prop:exp_moments}, there exists $\nu_1=\nu_1(y)>0$ such that if $c_1>0$ is taken small enough (depending only on $y$) then (C1) holds when $|\theta|\vee \mathrm{dis}(\P) < \nu_1$. On the other hand, since $\langle X_{\tau_1}-X_0,\ell\rangle \geq +1$ by definition of $\tau_1$, it follows that
	\[
	\big|\sum_{x \in \Z^d} x\mu^{(\theta)}(x)\big| \geq \overline{E}^Q_0(\phi_1(\theta)\langle X_{\tau_1},\ell\rangle) \geq  \overline{E}^Q_0(\phi_1(\theta))=1
	\] and so (C3) is satisfied with $c_3:=1$. Finally, to check (C2) we first notice that by \eqref{eq:gradient_identity} and \eqref{eq:hessian_formula},
	\[
	\Sigma_{\mu^{(\theta)}}= H_a(\theta) \overline{E}^Q_0\big(\tau_{1}\phi_1(\theta)\big).
	\] Since $\Sigma_{\mu^{(\theta)}}$ is a positive definite matrix whenever $|\theta|\vee \mathrm{dis}(\P) <\gamma_1$ by Proposition \ref{prop:hessian}, to obtain (C2) it will suffice to show that there exists $\nu_2=\nu_2(y,d,\kappa)>0$ such that if $|\theta|\vee \mathrm{dis}(\P) < \nu_2$ then
	\begin{equation}\label{eq:sing}
	\inf_{\P \in \mathcal{P}_\kappa(\nu_2)\,,\,|\theta|<\nu_2}\sigma_{\min}(\Sigma_{\mu^{(\theta)}})\geq c_2
	\end{equation} for some constant $c_2>0$ depending only on $y,d$ and $\kappa$, where $\sigma_{\min}(A)$ above denotes the smallest singular value of a matrix $A$. Since $\overline{E}^Q_0(\tau_1G(1,\theta))\geq 1$ and $1/\sigma_{\min}(A) = \lVert A^{-1}\rVert_2 \leq \sqrt{d}\lVert A^{-1}\rVert$ for any invertible $A \in \R^{d\times d}$, where $\lVert\cdot\rVert_2$ and $\lVert\cdot\rVert$ denote the operator $2$-norm and $1$-norm respectively, we see that \eqref{eq:sing} will hold if we show that for some $\nu_2=\nu_2(y,d,\kappa)>0$ we have
	\[
	\sup_{\P \in \mathcal{P}_\kappa(\nu_2)\,,\,|\theta|<\nu_2} \lVert (H_a(\theta))^{-1}\rVert < \infty
	\] and take $c_2:= (\sqrt{d}\sup_{\P \in \mathcal{P}_\kappa(\nu_2)\,,\,|\theta|<\nu_2} \lVert (H_a(\theta))^{-1}\rVert)^{-1}$. Using once again the identity $A^{-1} - B^{-1} = A^{-1}(B-A)B^{-1}$ for invertible matrices $A,B \in \R^{d\times d}$, we have
	\begin{equation}\label{eq:uid}
	\lVert (H_a(\theta))^{-1} - (H_a(0))^{-1} \rVert \leq  \lVert (H_a(\theta))^{-1}\rVert\lVert H_a(\theta) - H_a(0)\rVert \lVert (H_a(0))^{-1} \rVert.
	\end{equation} But then, by the proof of Proposition \ref{prop2} there exist $\nu_2=\nu_2(y,d,\kappa), c=c(y,d,\kappa)>0$ such that 
	\[
	\sup_{\P \in \mathcal{P}_\kappa(\nu_2)} \lVert (H_a(0))^{-1} \rVert \leq c \hspace{1cm}\text{ and }\hspace{1cm}\sup_{\P \in \mathcal{P}_\kappa(\nu_2)\,,\,|\theta|<\nu_2} \lVert H_a(\theta) - H_a(0)\rVert < \frac{1}{2c},
	\] which by \eqref{eq:uid} and the triangle inequality implies that
	\[
	\sup_{\P \in \mathcal{P}_\kappa(\nu_2)\,,\,|\theta|<\nu_2} \lVert (H_a(\theta))^{-1}\rVert \leq 2c <\infty
	\] and so (C2) follows. Thus, we see that for $\nu:=\nu_1 \wedge \nu_2 \wedge \frac{1}{2}$ we have by \eqref{eq:lastdisplay2}, \eqref{eq:lasteq} and \eqref{eq:boundbz}
	\begin{align*}
	\big[\sup_{\P \in \mathcal{P}_\kappa(\nu)\,,\,|\theta| <\nu\,,\,z \in \bbV_{d}} B_{z,n}(\theta)\big] \leq \e^{h(1/2)}K_1 \sum_{j=0}^{n-1} \frac{1}{(1+j)^{(d-1)/2}} \sum_{z'\in \bbV_{d}} \sup_{\P \in \mathcal{P}_\kappa(\nu)\,,\,|\theta| <\nu}D_{n-j,z'}(\theta)
	\end{align*} so that
	\begin{equation*}
	\sum_{n=1}^\infty \big[ \sup_{\P \in \mathcal{P}_\kappa(\nu)\,,\,|\theta| <\nu\,,\,z \in \bbV_{d}} B_{z,n}(\theta)\big]\leq \e^{h(1/2)}K_1 
	\sum_{j=0}^{\infty} \frac{1}{(1+j)^{(d-1)/2}}\sum_{n=1}^{\infty}\sum_{z\in \bbV_{d}}\sup_{\P \in \mathcal{P}_\kappa(\nu)\,,\,|\theta| <\nu}D_{n,z}(\theta).
	\end{equation*}
	
	The proof of Lemma \ref{lemma:cot3} will then be complete once we prove the result stated below.\qed

\begin{lemma}\label{lemma14}
	There exist $\gamma_6=\gamma_6(y),K'=K'(y,d)>0$ such that 
	\begin{equation*}
	\sum_{n=1}^{\infty}\sum_{z\in \bbV_{d}}\sup_{\P \in \mathcal{P}_\kappa(\gamma_6)\,,\, \abs{\theta}<\gamma_6}D_{n,z}(\theta)\leq K'.
	\end{equation*}
\end{lemma}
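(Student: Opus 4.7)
The plan is to combine an exponential-in-$n$ decay, coming from the requirement that $n$ be the first positive common renewal level of the two independent $Q$-walks, with a Gaussian-type decay in $|z|$, arising from the fact that two $Q$-walks starting at transverse separation $z$ cannot intersect before their transverse positions come within a constant distance, which typically requires a level of order $|z|^2$. These two decays will be extracted via a Cauchy--Schwarz splitting, with $F_n(\theta)$ absorbed into a slowly growing factor $M^n$ (with $M$ close to $1$ when $\gamma_6$ is small).

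To set this up, I would first apply Cauchy--Schwarz to separate the integrand $F_n(\theta)$ from the event. Using Remark~\ref{rem:bound} together with the crude bound $I_n \leq L_n \leq \tau_n$ (and the analogous bound for $\widetilde X$), one has
\[
F_n(\theta)^{2} \leq \exp\bigl(2(2|\theta|+h(\mathrm{dis}(\bbP)))(\tau_n+\widetilde\tau_n)\bigr).
\]
Choosing $\gamma_6$ small enough that $4\gamma_6 + 2h(\gamma_6) < \gamma_0/2$ (with $\gamma_0$ as in Proposition~\ref{prop:exp_moments}) and using the i.i.d.\ renewal block structure of $\tau_n$, one gets $\overline E^Q_{0,z}(F_n(\theta)^{2}) \leq M^n$ where $M = M(\gamma_6)$ can be made arbitrarily close to $1$ by further shrinking $\gamma_6$. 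Setting $N := \inf\{k \in \mathcal{L} : k > 0\}$, Cauchy--Schwarz then yields
\[
D_{n,z}(\theta) \leq M^{n/2}\,\sqrt{\overline Q_{0,z}(N = n,\,\tau < n)}.
\]

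Next, I would establish two complementary bounds for the probability factor. The first is a uniform exponential decay in $n$, namely $\sup_{z \in \bbV_d}\overline Q_{0,z}(N \geq n) \leq C_1\,\e^{-c_1 n}$. By translation invariance of the deterministic $Q$-environment, the joint law of the two renewal-level sequences under $\overline Q_{0,z}$ is independent of $z \in \bbV_d$, so uniformity in $z$ is automatic; the exponential decay itself follows from a standard renewal-theoretic argument using that, by Proposition~\ref{prop:exp_moments} and Lemma~\ref{lemma:pos}, each renewal-level sequence has a uniformly positive asymptotic density, so that the two independent sequences hit a common level inside any window of length $L$ with uniformly positive probability. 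The second bound is the Gaussian-type transverse estimate
\[
\overline Q_{0,z}(\tau \leq n-1) \leq C_2\,\e^{-c_2 |z|^2/n},
\]
valid for $n \leq c_3 |z|^2$; this follows by applying the Azuma--Hoeffding inequality to the transverse components of both walks (whose martingale increments are uniformly bounded by property (P1) of Lemma~\ref{lemma:prop}) together with a union bound over the $O(\tau_n)$ possible meeting times.

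Combining all the above yields
\[
D_{n,z}(\theta) \leq C\,M^{n/2}\,\min\bigl(\e^{-c_1 n/2},\,\e^{-c_2 |z|^2/(2n)}\bigr).
\]
Shrinking $\gamma_6$ further to ensure $M < \e^{c_1} \wedge \e^{c_2}$, splitting the sum $\sum_{n \geq 1} D_{n,z}(\theta)$ into the regimes $n > |z|$ and $n \leq |z|$ and optimizing the exponent in each regime gives $\sum_{n \geq 1} D_{n,z}(\theta) \leq C\,\e^{-c'|z|}$ for some $c' = c'(y,d,\kappa) > 0$. Since $\sum_{z \in \bbV_d}\e^{-c'|z|}$ converges, the claim follows. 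The main obstacle will be the uniform (in $\bbP \in \mathcal{P}_\kappa$) exponential tail estimate for $N$, which amounts to showing that the intersection of two independent renewal-level sequences with uniformly positive densities itself has a uniformly positive intersection density; this should follow from standard renewal theory combined with the uniform moment bounds of Proposition~\ref{prop:exp_moments}, but the quantitative constants must be tracked carefully to depend only on $y$ and $\kappa$.
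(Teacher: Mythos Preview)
Your overall architecture matches the paper's: a Cauchy--Schwarz separation of $F_n(\theta)$ from the event, an exponential-in-$n$ decay coming from the first positive common renewal level $N=\lambda^*$, and a decay in $|z|$ coming from the intersection constraint $\tau<n$. The paper also proves the uniform exponential tail for $\lambda^*$ exactly along the lines you sketch (an inductive argument using $Q_0(\beta_0=\infty)>\overline c$ and the exponential moments of $\tau_1$), so that part is fine.

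The genuine gap is in your Gaussian-type transverse estimate. The bound $\overline Q_{0,z}(\tau\le n-1)\le C_2\e^{-c_2|z|^2/n}$ does not follow from Azuma--Hoeffding plus ``a union bound over the $O(\tau_n)$ possible meeting times'' as stated, because $\tau_n$ is random: to apply Azuma over a deterministic number of steps you must first restrict to $\{\tau_n\le An\}$, and the complementary event $\{\tau_n>An\}$ contributes a term of order $\e^{-c'n}$ that carries \emph{no} decay in $|z|$. When you then split the sum at $n=|z|$ and use your second bound on $\{n\le|z|\}$, this extra $\e^{-c'n}$ piece yields $\sum_{n\le|z|}M^{n/2}\e^{-c'n/2}=O(1)$ for each $z$, and summing that constant over $z\in\bbV_d$ diverges. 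In other words, the min-of-two-exponentials bound you write down for $D_{n,z}(\theta)$ is not what the argument actually produces.

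The paper sidesteps this entirely by using a cruder but more robust estimate for the $|z|$-decay: from $\tau<n$ one deduces only $\tau_n\vee\widetilde\tau_n\ge|z|/2$, and then Markov's inequality with the $(4d+1)$-th moment of $\tau_n$ (controlled via $\overline E^Q_0(\tau_1^{4d+1})$) gives $\overline Q_{0,z}(\tau<n)\le C\,n^{4d}(1\vee|z|)^{-(4d+1)}$. This polynomial growth in $n$ is harmless because it is beaten by the exponential $\e^{-cn}$ from the $\lambda^*$ factor, and the resulting $(1\vee|z|)^{-(d+1/4)}$ decay (after taking the $1/4$-power in a three-factor H\"older split) is summable over the $(d-1)$-dimensional lattice $\bbV_d$. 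Your approach can be repaired along the same lines: replace the Gaussian estimate by this moment bound and use $\min(a,b)\le\sqrt{ab}$ after your two-way Cauchy--Schwarz; you then recover exactly the paper's computation.
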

\begin{proof} By Cauchy-Schwarz inequality,
	\begin{equation}\label{eq:proddesc}
	D_{n,z}(\theta)\leq \Big(\overline{E}_{0,z}^{Q}\big( (F_n(\theta))^2\big)\Big)^{1/2} \Big(\overline{P}_{0,z}^{Q}(n=\inf\{k\in \mathcal{L}:k>0\})\Big)^{1/4}\Big(\overline{P}_{0,z}^{Q}(n>\tau)\Big)^{1/4}. 
	\end{equation}
	As in \eqref{eq:db1}, the first factor on the right-hand side of \eqref{eq:proddesc} can be bounded from above by 
	\begin{equation}
	\label{eq:firstfactor}
	\left[\overline{E}^Q_{0}\left( \e^{4(|\theta|+h(\mathrm{dis}(\P)))\tau_1}\right)\right]^{n/2} \leq \left[\overline{E}^Q_{0}\left( \e^{\frac{\gamma_0}{2}\tau_1}\right)\right]^{\frac{4(|\theta|+h(\mathrm{dis}(\P)))}{\gamma_0}n}\leq \e^{\log(2/\overline{c})\frac{4(|\theta|+h(\mathrm{dis}(\P)))}{\gamma_0}n}
	\end{equation} whenever $|\theta| \vee h(\mathrm{dis}(\P)) < \frac{\gamma_0}{16}$, with $\gamma_0$ as in Proposition \ref{prop:exp_moments}, by Jensen's inequality.
	
	On the other hand, to deal with the third factor we notice that if $z \neq 0$ then whenever $n>\tau$ then $X_{i} = \widetilde{X}_{j}$ for some $1 \leq i \leq \tau_n$ and $1 \leq j \leq \widetilde{\tau}_n$ so that, in particular, we must have $\tau_n \vee \widetilde{\tau}_n \geq \frac{|z|}{2}$. Then, using the inequality $(a_{1}+\cdots+a_{n})^{m}\leq n^{m-1}(a_{1}^{m}+\cdots+a_{n}^{m})$, valid for positive $(a_{i})_{1\leq i\leq n}$ and $m\geq 1$, by the union bound we obtain
	\begin{align*}
	\overline{P}_{0,z}^{Q}(n>\tau)&\leq 2\overline{P}_{0}^{Q}\Big(\tau_{n}\geq \frac{|z|}{2}\Big)\leq 2\Big(\frac{2}{|z|}\Big)^{4d+1}\overline{E}_{0}^{Q}(\tau_{n}^{4d+1})\leq 2\Big(\frac{2}{|z|}\Big)^{4d+1}n^{4d}\overline{E}_{0}^{Q}(\tau_{1}^{4d+1}) 
	\end{align*} From this, by the trivial bound $\overline{P}^Q_{0,0}(n > \tau) \leq 1$ and Proposition \ref{prop:exp_moments} we conclude that there exists $K'_1=K'_1(d,y)>0$ such that, for any $n \geq 1$ and $z \in \bbV_d$,
	\begin{equation}
	\label{eq:secondfactor}
	\Big(\overline{P}_{0,z}^{Q}(n>\tau)\Big)^{1/4} \leq K'_1 n^d (1 \vee |z|)^{-\big(d+\tfrac 14\big)}.
	\end{equation}
	
	Finally, to control the middle factor in the right-hand side of \eqref{eq:proddesc}, we will show that there exist $c=c(y),K_2'=K_2'(y)>0$ such that, for any $\P \in \mathcal{P}_\kappa$,
	\begin{equation}
	\label{eq:expcontrol}
	\sup_{z\in \bbV_{d}}\overline{E}_{0,z}^{Q}(\e^{4c\lambda^*})< (K_2')^4
	\end{equation} where $\lambda^*:= \inf\{k\in \mathcal{L}:k>0\}$, so that
	\begin{equation}
	\label{eq:expcontrol2} \Big(\overline{P}_{0,z}^{Q}(n=\inf\{k\in \mathcal{L}:k>0\})\Big)^{1/4} \leq \Big(\overline{P}_{0,z}^{Q}(\lambda^* \geq n)\Big)^{1/4} \leq K_2'\e^{-cn}.
	\end{equation} To this end, for $m \geq 0$ define 
	\[
	\beta(m):=\inf\{ n \geq L_m : \langle X_n, \ell \rangle < m\}\hspace{1cm}\text{ and }\hspace{1cm}R(m):=\sup\{ \langle X_n,\ell\rangle : L_m \leq n < \beta(m)\},
	\] together with the corresponding quantities $\widetilde{\beta}(m)$,$\widetilde{R}(m)$ for $\widetilde{X}$ and consider the sequence $(\lambda_j)_{j \geq 1}$ defined inductively by first taking $\lambda_1:=1$ and then setting 
	\[ 
	\lambda_{j+1}= \begin{cases}
	R(\lambda_j) \wedge \widetilde{R}(\lambda_j) + 1 & \text{ if }\lambda_j<\infty\\ 
	\infty & \text{ if }\lambda_j=\infty.
	\end{cases}
	\] It is not hard to check that $\lambda^*=\sup\{ \lambda_j : \lambda_j < \infty\}$. We will use this representation of $\lambda^*$ to estimate its exponential moments and show \eqref{eq:expcontrol}. In order to do this, let us first observe that if we define $\lambda:=R(0) \wedge \widetilde{R}(0) +1$ then, 
	for any $z \in \bbV_d$ and $\hat{c} \in (0,\gamma_0)$ (with $\gamma_0$ as in Proposition \ref{prop:exp_moments}), we have by H\"{o}lder's inequality that
	\[
	E^Q_{0,z}(\e^{\hat{c}\lambda}\,;\,\lambda < \infty) \leq \left[ E^Q_{0,z}(\e^{\gamma_0\lambda}\,;\,\lambda < \infty)\right]^{\tfrac{\hat{c}}{\gamma_0}} \left[ Q_{0,z}(\lambda < \infty)\right]^{1-\tfrac{\hat{c}}{\gamma_{0}}}.
	\] Since $Q_0$-a.s. we have $R(0)+1 \leq \tau_1$ on the event that $\beta_0 < \infty$ (observe that $\beta_0=\beta(0)$ $Q_0$-a.s.), then by Proposition \ref{prop:exp_moments}
	\[
	E^Q_{0,z}(\e^{\gamma_0\lambda}\,;\,\lambda < \infty) \leq E^Q_{0,z}(\e^{\gamma_0(R(0)+1)}\,;\, \beta_0 < \infty) + E^Q_{0,z}(\e^{\gamma_0(\widetilde{R}(0)+1)}\,;\,\widetilde{\beta}_0 < \infty) \leq 2 E^Q_{0,z}(\e^{\gamma_0\tau_1}) \leq 4.
	\] On the other hand, by Lemma \ref{lemma:pos} we have 
	$
	Q_{0,z}(\lambda < \infty) \leq Q_{0,z}(\beta_0 < \infty \text{ or }\widetilde{\beta}_0 < \infty) = 1 - (Q_0(\beta_0=\infty))^2 < 1 - \overline{c}^2.
	$ It follows that for some $\hat{c}=\hat{c}(y) \in (0,\gamma_0)$ sufficiently small we have
	\[
	\sup_{z \in \bbV_d}E^Q_{0,z}(\e^{\hat{c}\lambda}\,;\,\lambda < \infty)  \leq 1-\frac{\overline{c}^2}{2}.
	\] With this, using the Markov property and translation invariance, for $z \in \bbV_d$ we may compute
	\begin{align*}
	E^Q_{0,z}(\e^{\hat{c}\lambda^*}) &= \sum_{j=1}^\infty E^Q_{0,z}(\e^{\hat{c}\lambda_j}\,;\,\lambda^*=\lambda_j) \leq  \sum_{j=1}^\infty  E^Q_{0,z}(\e^{\hat{c}\lambda_j}\,;\,\lambda_j<\infty)\\
	& = \sum_{j=1}^\infty \sum_{z' \in \bbV_d} E^Q_{0,z}(\e^{\hat{c}\lambda_{j-1}}\,;\,\lambda_{j-1}<\infty, \widetilde{X}_{\widetilde{L}_{\lambda_{j-1}}}-X_{L_{\lambda_j}}=z')E^Q_{0,z'}(\e^{\hat{c}\lambda}\,;\,\lambda<\infty)\\
	& \leq \sum_{j=1}^\infty E^Q_{0,z}(\e^{\hat{c}\lambda_{j-1}}\,;\,\lambda_{j-1}<\infty)(1-\tfrac{\overline{c}^2}{2}),
	\end{align*} so that by induction we conclude that
	$
	\sup_{z \in \bbV^d} E^Q_{0,z}(\e^{\hat{c}\lambda^*}) \leq \e^{ \hat{c}} \sum_{j=1}^\infty(1-\tfrac{\overline{c}^2}{2})^{j-1} = \frac{2\e^{\hat{c}}}{\overline{c}^2}, 
	$and so \eqref{eq:expcontrol} follows. 
	Gathering \eqref{eq:firstfactor}, \eqref{eq:expcontrol2} and \eqref{eq:secondfactor}, from \eqref{eq:proddesc} we see that if $\gamma_6>0$ is chosen sufficiently small so that $|\theta| \vee h(\mathrm{dis}(\P)) < \frac{\gamma_0}{16}$ and $\log(2/\overline{c})\frac{4(|\theta|+h(\mathrm{dis}(\P)))}{\gamma_0} < \tfrac{c}{2}$ with $c$ as in \eqref{eq:expcontrol2} (which can be done depending only on $y$), then  
	\[
	\sum_{n=1}^{\infty}\sum_{z\in \bbV_{d}}\sup_{\P \in \mathcal{P}_\kappa(\gamma_6)\,,\, \abs{\theta}<\gamma_6}D_{n,z}(\theta) \leq K_1'K_2' \big[	\sum_{n=1}^{\infty} n^d \e^{-\tfrac{c}{2}n}\big]\big[\sum_{z\in \bbV_{d}} (1 \vee |z|)^{-\big(d+\tfrac{1}{4}\big)}\big]=:K'< \infty,
	\] which completes the proof.
\end{proof}

\subsection{Proof of Lemma \ref{lemma:cot1}.}\label{sec:cot1}We finish by giving the proof of Lemma \ref{lemma:cot1}. We first notice that there exist constants $\eta_1,\eta_2,\eta_3>0$, all depending only on $y,d$ and $\kappa$ such that, for any $\P \in \mathcal{P}_\kappa$, 
	\begin{enumerate}
		\item [D1.] $Q_0(\beta_0=\infty) > \eta_1$,
		\item [D2.] $E^Q_0(\tau_1^9)< \eta_2$,
		\item [D3.] $\sup_{z \in \Z^d} \overline{Q}_0(X_{\tau_n}=z) \leq \eta_3 n^{-d/2}$ for any $n \geq 1$.
	\end{enumerate} Indeed, (D1)-(D2) follow immediately from Lemma \ref{lemma:pos} and Proposition \ref{prop:exp_moments}, respectively. To check (D3), note that for any $\P \in \mathcal{P}_\kappa$ the law $\mu^*$ of $X_{\tau_1}$ under $\overline{Q}_0$ satisfies conditions (C1)-(C2)-(C3) in the proof of Lemma \ref{lemma:cot3} for some constants $c_1,c_2,c_3>0$ which depend only on $y,d$ and $\kappa$. Indeed, this follows from the proof of Lemma \ref{lemma:cot3} upon noticing that $\mu^*$ coincides with $\mu^{(0)}$ for the zero-disorder law $\P_\alpha$ with marginals $\alpha \in \mathcal{M}_1^{(\kappa)}(\bbV)$. By \cite[Eq. 5.5]{BS02}, this gives (D3) for some $\eta_3$ depending only on $y,d$ and $\kappa$.
	
	Under these conditions, since $\sigma < \infty$ implies that the two walks need to intersect at a time other than zero, by essentially repeating the proofs of \cite[Propositions 3.1 and 3.4]{BZ08} (but using instead the estimates in (D1)-(D2)-(D3) which are uniform over $\P \in \mathcal{P}_\kappa$), it can be shown that there exists $N=N(y,d,\kappa) \geq 1$ such that, for any $\P \in \mathcal{P}_\kappa$,
	\[
	\sup_{|z| \geq 2N} \overline{Q}_{0,z}(\sigma < \infty) \leq \frac{1}{2}.
	\]	To deal with $z \in \Z^d$ such that $|z| < 2N$, take any such $z$ together with $\e^* \in \bbV \setminus \{\ell,-\ell\}$ and assume without loss of generality that $\langle z, \e^* \rangle \geq 0$. Then consider the events 
	\[
	E_1:=\{ X_N = -Ne^*\,,\,\widetilde{X}_N = \widetilde{X}_0+Ne^*\} \qquad E_2:=\{ X_i \neq \widetilde{X}_j \text{ for all }i,j\ > N\}.
	\] Since $|z+2Ne^*| \geq 2N$ by choice of $e^*$ and on $E_1$ we have both $\langle X_i - X_0 , \ell\rangle = \langle \widetilde{X}_j - \widetilde{X}_0, \ell\rangle = 0$ and $X_i \neq \widetilde{X}_j$ for all $1 \leq i,j \leq N$, using (P1) from Lemma \ref{lemma:prop} and translation invariance, we obtain
	\[
	\overline{Q}_{0,z}(\sigma = \infty) \geq \overline{Q}_{0,z}(E_1 \cap E_2) \geq c_\kappa^{2N} \inf_{|y| \geq 2N} \overline{Q}_{0,y}(\sigma = \infty) > \frac{1}{2}c_\kappa^{2N} > 0
	\] for any $\P \in \mathcal{P}_\kappa$, so that now Lemma \ref{lemma:cot1} follows upon taking $\delta:=\frac{1}{2}c_\kappa^{2N}$.\qed

\noindent{\bf Acknowledgement.} 
The authors are very grateful to  Noam Berger, Nina Gantert and Atilla Yilmaz for very useful comments on
 an earlier version of the manuscript. The first author has been supported by ANID-PFCHA/Doctorado Nacional no. 2018-21180873. The second author is supported by the Deutsche Forschungsgemeinschaft (DFG) under Germany's Excellence Strategy EXC 2044--390685587, Mathematics M\"unster: Dynamics--Geometry--Structure. The third author has been partially supported by Fondo Nacional de Desarrollo  Cient\'ifico y Tecnol\'ogico 1180259 and Iniciativa
Cient\'\i fica Milenio. The fourth author has been supported in part at the Technion by a fellowship from the Lady Davis Foundation, the Israeli Science Foundation grants no. 1723/14 and 765/18, and by the NYU-ECNU Institute of Mathematical Sciences at NYU Shanghai. This research was also supported by a grant from the United States-Israel Binational Science Foundation (BSF), no. 2018330.

\end{document}